\documentclass[11pt,a4paper]{amsart}
\usepackage{amssymb,amsmath,epsfig,graphics,mathrsfs}

\usepackage{graphicx}

\usepackage{fancyhdr}
\pagestyle{fancy}
\fancyhead[RO,LE]{\small\thepage}
\fancyhead[LO]{\small \emph{\nouppercase{\rightmark}}}
\fancyhead[RE]{\small \emph{\nouppercase{\rightmark}}}
\fancyfoot[L,R,C]{}



\usepackage{hyperref}
\hypersetup{
 colorlinks   = true,
 urlcolor     = blue,
 linkcolor    = blue,
 citecolor   = red ,
 bookmarksopen=true
}


\usepackage{amsmath}
\usepackage{amsfonts}
\usepackage{amssymb}
\usepackage{amsthm}
\usepackage{epsfig,graphics,mathrsfs}
\usepackage{graphicx}

\usepackage{latexsym} 
\usepackage{amssymb,amsmath}
\usepackage{amsthm}
\usepackage{longtable,booktabs,setspace} 
\usepackage{url}

\usepackage[usenames, dvipsnames]{color} 

\usepackage{hyperref}

 \textwidth = 16.1cm
  \textheight = 19.55cm

 \hoffset = -1.6cm

\newcommand*\MSC[1][1991]{\par\leavevmode\hbox{%
\textit{\,\,\,\,\, #1 Mathematical subject classification:\ }}}
\newcommand\blfootnote[1]{%
  \begingroup
  \renewcommand\thefootnote{}\footnote{#1}%
  \addtocounter{footnote}{-1}%
  \endgroup
}

\def \phi {\varphi}

\def \R {\mathbb{R}}

\def \G{\Gamma}


\newcommand{\Rn}{\mathbb R^n}

\newcommand{\Hn}{\mathbb H^n}

\newcommand{\p}{\partial}

\newcommand{\bG}{\mathbb {G}}
\newcommand{\bg}{\mathfrak g}

\newcommand{\la}{\lambda}

\numberwithin{equation}{section}

\newcommand{\beq}{\begin{equation}}
\newcommand{\bea}[1]{\begin{array}{#1} }
\newcommand{\eeq}{ \end{equation}}
\newcommand{\ea}{ \end{array}}

\newcommand{\ve}{\varepsilon}

\newcommand{\Lp}{L^p}




\newtheorem{theorem}{Theorem}[section]
\newtheorem{lemma}[theorem]{Lemma}
\newtheorem{proposition}[theorem]{Proposition}
\newtheorem{corollary}[theorem]{Corollary}
\newtheorem{remark}[theorem]{Remark}
\newtheorem{definition}[theorem]{Definition}

\DeclareMathOperator{\sech}{sech}

\numberwithin{equation}{section}

\begin{document}

\title[Feeling the heat, etc.]{Feeling the heat in a group of Heisenberg type}

{\blfootnote{\MSC[2020]{35K08, 35R11, 53C18}}}
\keywords{Extension problems. Conformal invariance. Fundamental solutions}

\date{}

\begin{abstract}
In this paper we use the heat equation in a group of Heisenberg type $\bG$ to provide a unified treatment of the two very different extension problems for the time independent pseudo-differential operators $\mathscr L^s$ and $\mathscr L_s$, $0< s\le 1$. Here, $\mathscr L^s$ is the fractional power of the horizontal Laplacian, and $\mathscr L_s$ is the conformal fractional power of the horizontal Laplacian on $\bG$. One of our main objective is compute explicitly the fundamental solutions of these nonlocal operators by a new approach exclusively based on partial differential equations and semigroup methods. When $s=1$ our results recapture the famous fundamental solution found by Folland and generalised by Kaplan.  
\end{abstract}

\dedicatory{``I have been told that when the work on the first successful atomic pile was being done at the University of Chicago, a copy of Watson's book was chained to a table and always open." \\\emph{From R. Askey's review for the AMS of the second edition of G. N. Watson's classical treatise on Bessel functions}}

\author{Nicola Garofalo}

\address{Dipartimento d'Ingegneria Civile e Ambientale (DICEA)\\ Universit\`a di Padova\\ Via Marzolo, 9 - 35131 Padova,  Italy}
\vskip 0.2in
\email{nicola.garofalo@unipd.it}

\thanks{The first author was supported in part by a Progetto SID (Investimento Strategico di Dipartimento) ``Non-local operators in geometry and in free boundary problems, and their connection with the applied sciences", University of Padova, 2017. Both authors are supported in part by a Progetto SID: ``Non-local Sobolev and isoperimetric inequalities", University of Padova, 2019.}

\author{Giulio Tralli}
\address{Dipartimento d'Ingegneria Civile e Ambientale (DICEA)\\ Universit\`a di Padova\\ Via Marzolo, 9 - 35131 Padova,  Italy}
\vskip 0.2in
\email{giulio.tralli@unipd.it}

\maketitle

\tableofcontents

\section{Introduction}\label{S:intro}

In Theorem 2 of his 1973 note \cite{Fo} Folland proved the following remarkable result:

\noindent \textbf{Theorem.}\  
\emph{The fundamental solution with pole at the group identity of the horizontal Laplacian $-\mathscr L$ in the Heisenberg group $\Hn$ is given by
\begin{equation}\label{T:folland}
\mathscr E(z,\sigma) = C(n) \left(|z|^4 + 16 \sigma^2\right)^{-\frac{n}{2}},
\end{equation}
where $C(n)>0$ is a suitable explicit constant}.

Here, we have indicated with $(z,\sigma)\in \R^{2n+1}$ the real coordinates of a point in $\Hn$. We also note that the normalisation constants in the above expression of $\mathscr E(z,\sigma)$ are not the same as those in \cite{Fo}, where a different group law was adopted. In this paper we always use the group law dictated by the Baker-Campbell-Hausdorff formula, see the opening of Section \ref{S:prelim} below. 

The above theorem has played a pivotal role in the development of analysis in $\Hn$. One of the main reasons lies in the interpretation of this Lie group as the boundary of the Siegel upper half-space in $\mathbb C^{n+1}$, 
see in this connection the seminal work \cite{FScpam}. But  \eqref{T:folland} has also entered in problems with a geometric flavour, such as the theory of conformal and quasiconformal mappings, the CR Yamabe problem and that of the best constants in the Hardy-Littlewood-Sobolev inequality on $\Hn$, see the celebrated works of Kor\'anyi and Reimann \cite{KR}, Jerison and Lee \cite{JL} and Frank and Lieb \cite{FL}. We also mention that Folland's theorem was generalised by Kaplan to all groups of Heisenberg type in \cite[Theorem 2]{Ka} (see also the earlier work \cite[Theorem 1]{KP} where the same result was proved for groups of Iwasawa type).
 One notable aspect of \eqref{T:folland} is the resemblance with the fundamental solution of $-\Delta$ which for $n\ge 3$ is given by $c(n) |x|^{2-n}$. To see this, denote by $Q = 2n+2$ the homogeneous dimension of $\Hn$ attached to the non-isotropic group dilations $\delta_\la(z,\sigma) = (\la z,\la^2 \sigma)$, then we can rewrite $\mathscr E(z,\sigma) = C(n) N(z,\sigma)^{2-Q}$, where $N(z,\sigma) = (|z|^4 + 16 \sigma^2)^{1/4}$ is the so called \emph{gauge function} on $\Hn$ \footnote{Such denomination was introduced by Koranyi and Vagi in \cite{KV}. The subadditivity of the gauge $N(z,\sigma)$ in $\Hn$ or more in general in groups of Heisenberg type was proved in \cite{Cy2}, see also \cite[Section F]{KR}. As a consequence, such particular gauge defines a distance on the group.}.  
Since it appears evident that there is no unique choice of a homogeneous gauge (in principle, all properly normalised functions such as $(|z|^{4k} + \sigma^{2k})^{1/{4k}}$, $k\in \mathbb N$,  might be deemed as reasonable choices), one is left with wondering how does one a priori know that the function $N(z,\sigma)$ is the natural choice to try? 

As a by-product of the results in this note we provide an answer to this question by resorting to the one object which occupies a central position in analysis and geometry: the ubiquitous heat equation. More precisely, suppose one does not know a priori the magic gauge function $N(z,\sigma)$ in $\Hn$, or more in general in a group of Heisenberg type $\bG$. Corollary \ref{C:Htypes} below shows that, by running the heat flow on $\bG$, one is naturally lead into such function. 
As it will be clear from the subsequent discussion, this result can be viewed as the limiting value ($s =1$) where the results for two different families of nonlocal operators merge: the former can be defined in an arbitrary stratified nilpotent Lie group, aka a Carnot group, and can be almost entirely analysed by semigroup methods; the latter has instead its roots in conformal CR geometry. As a consequence, its natural geometric framework is not a general Carnot group, but rather the Heisenberg group $\Hn$ (which is the prototypical CR manifold),  and it has so far been studied by a combination of different ideas which include scattering and non-commutative harmonic analysis. In accordance with the notation adopted by Frank and Lieb in their cited work \cite{FL}, if $\mathscr L$ is a horizontal Laplacian on $\bG$ as in \eqref{L} below, we agree to indicate $\mathscr L^s = (-\mathscr L)^s$, whereas $\mathscr L_s$ will denote the conformal fractional power of the horizontal Laplacian on $\bG$. The ambient for the results in this paper will be that of groups of Heisenberg type (for the relevant notion and the main properties of such groups see Section \ref{S:prelim} below). Henceforth, we will adhere to the convention of using the superscript $s$ for any quantity which has to do with $\mathscr L^s$, whereas we will use the subscript $s$ for anything that has to do with $\mathscr L_s$. Thus, for instance, the fundamental solution of $\mathscr L^s$  is denoted by $\mathscr E^{(s)}(g)$, whereas we use $\mathscr E_{(s)}(g)$ for that of $\mathscr L_s$. 

The main objective of the present work is to show that, notwithstanding their substantial differences, these two classes of nonlocal operators can be treated in a unified way by a systematic use of the heat equation and suitable modifications of the latter. 
To describe our framework consider first a general Carnot group $\bG$ with a fixed horizontal Laplacian $\mathscr L$. Throughout this paper 
we indicate by $P_t u(g) = e^{-t\mathscr L} u(g)= \int_{\bG} p(g,g',t) u(g') dg'$ the heat semigroup constructed by Folland in \cite{Fo75}. We recall that such semigroup is stochastically complete, i.e., $P_t 1 = 1$. The semigroup $P_t$ is all that is needed to study the fractional powers $\mathscr L^s$, for $0<s<1$. We emphasise that we define the action of this nonlocal operator on a function $u\in C^\infty_0(\bG)$ by the well-known formula of Balakrishnan \cite{B},
\begin{equation}\label{bala}
\mathscr L^s u(g) = -\frac{s}{\G(1-s)} \int_0^\infty \frac{1}{t^{1+s}} (P_t u(g) - u(g)) dt.
\end{equation}
With \eqref{bala} in hands, we next consider the Riesz potentials defined  by the formula
\begin{equation}\label{rpotG0}
\mathscr I^{(2s)} u(g) = \frac{1}{\Gamma(s)} \int_0^\infty t^{s -1} P_t u(g) dt,\ \ \ \ \ \ \ \ \ 0<s<1,
\end{equation}
see \cite{Fo75}.
It is easy to prove, see Proposition \ref{P:Isuno} below, that
\begin{equation}\label{caposotto}
\mathscr I^{(2s)} \circ \mathscr L^s = \mathscr L^s \circ \mathscr I^{(2s)} = I.
\end{equation}
A direct important consequence of \eqref{caposotto} is that the kernel
\begin{equation}\label{EsH0}
\mathscr E^{(s)}(g)  \overset{def}{=}  \frac{1}{\Gamma(s)} \int_0^\infty t^{s-1}  p(g,t) dt
\end{equation}
 of the operator $\mathscr I^{(2s)}$ constitutes the fundamental solution of the nonlocal operator $\mathscr L^s$ with pole at the group identity. In Theorem \ref{T:fss} below, we provide an explicit integral expression for such kernel in the setting of groups of Heisenberg type. In discrepancy with the above mentioned Corollary \ref{C:Htypes}, such result shows in particular that in the logarithmic coordinates $g = (z,\sigma)\in \bG$ one has $\mathscr E^{(s)}(z,\sigma) = \Phi(|z|^4,|\sigma|^2)$, but gauge symmetry breaks down when $0<s<1$. However, in the limit as $s\nearrow 1$ the conformal geometry of the group appears, in the sense that with further work it is possible to recover Corollary \ref{C:Htypes} from Theorem \ref{T:fss}.
 
This leads us to introduce the second pseudo-differential operator $\mathscr L_s$. Following the work by Branson, Fontana and Morpurgo \cite[(1.33)]{BFM}, the nonlocal operator $\mathscr L_s$ can be defined in the Heisenberg group $\Hn$, with $T = \p_\sigma$, via the spectral formula 
\begin{equation}\label{bfm}
\mathscr L_s = 2^s |T|^s \frac{\G(-\frac 12\mathscr L |T|^{-1} + \frac{1+s}2)}{\G(-\frac 12 \mathscr L |T|^{-1} + \frac{1-s}2)}.
\end{equation}
Formula \eqref{bfm} is the counterpart of the well-known representation $(-\Delta)^s u = \mathscr F^{-1}(2\pi |\xi|)^{2s} \hat u)$, see \cite[Chapter 5]{St}, except that, as it will soon be clear, now matters are much more involved. More in general, in a group of Heisenberg type $\bG$, with logarithmic coordinates $g= (z,\sigma)\in \bG$, where $\sigma$ is the vertical variable, the pseudo-differential operator $\mathscr L_s$ is defined by the following generalisation of  \eqref{bfm} 
\begin{equation}\label{bfmH}
\mathscr L_s = 2^s (-\Delta_\sigma)^{s/2} \frac{\G(-\frac 12\mathscr L (-\Delta_\sigma)^{-1/2} + \frac{1+s}2)}{\G(-\frac 12 \mathscr L (-\Delta_\sigma)^{-1/2} + \frac{1-s}2)},
\end{equation}
see \cite{RT}. 

From our perspective, the unfavourable aspect of either definitions \eqref{bfm} or \eqref{bfmH} is that if one wants to study the nonlocal operators $\mathscr L_s$ starting from them, then one is immediately led into the fairly elaborate computational aspects connected with non-commutative Fourier analysis on the group $\bG$, thus losing sight of the remarkable flexibility of the heat equation offered by \eqref{bala}. Since the present work is about the heat flow, instead of \eqref{bfmH} we will henceforth adopt the following definition which, at least formally, seems identical to \eqref{bala},
\begin{equation}\label{RTs}
\mathscr L_s u(g) = - \frac{s}{\G(1-s)} \int_0^\infty \frac{1}{t^{1+s}} \left[P_{(-s),t} u(g) - u(g)\right] dt,
\end{equation}
where $0<s<1$, and $u\in C^\infty_0(\bG)$. In \eqref{RTs} we have indicated with $P_{(-s),t}$ a linear operator on $\Lp(\bG)$ that is associated with a modified heat equation and whose origin will be explained in detail in Section \ref{S:secdescr}, but see also the discussion leading to \eqref{conn} below.  The equivalence between formulas  \eqref{bfmH} and \eqref{RTs}   was established by Roncal and Thangavelu in Proposition 4.1 of their remarkable paper \cite{RTaim} on optimal Hardy inequalities, see also the companion work \cite{RT} in which they generalised their results to groups of Heisenberg type. 

Having defined $\mathscr L_s$ via \eqref{RTs}, for $0<s\le 1$ we now introduce two modified heat flows on $\bG$. We consider the kernel
\begin{equation}\label{poisson}
\mathscr K_{(s)}((z,\sigma),t) = \frac{2^k}{\left(4\pi t\right)^{\frac m2+k}}\int_{\R^k} e^{-\frac it \langle \sigma,\la\rangle} \left(\frac{|\la|}{\sinh |\la|}\right)^{\frac m2+1-s}  e^{- \frac{|z|^2}{4t} \frac{|\la|}{\tanh |\la|}}d\la,
\end{equation}
and we denote by $\mathscr K_{(-s)}((z,\sigma),t)$ the function obtained by changing $s$ into $-s$ in \eqref{poisson}. We note that, in the local case $s = 1$, the kernel $\mathscr K_{(s)}$  coincides with the Gaveau-Hulanicki-Cygan heat kernel $p((z,\sigma),t)$, see definition \eqref{pipposemprepiupiccoloHoo} below. If with a slight abuse of notation we let 
\[
\mathscr K_{(\pm s)}(g,g',t)= \mathscr K_{(\pm s)}(g^{-1}\circ g',t),
\]
then we consider the two linear operators on $\Lp(\bG)$ defined by the formula
\begin{equation}\label{Pst}
P_{(\pm s),t} u(g) = \int_{\bG} \mathscr K_{(\pm s)}(g,g',t) u(g') dg'.
\end{equation}
We note explicitly that $P_{(-s),t}$ is the Roncal-Thangavelu operator  that appears in \eqref{RTs} (as a help to the reader we mention that in \cite[Formula (2.18)]{RTaim} they denote by $\mathcal K^s_t$ what we indicate with $\mathscr K_{(-s),t}$). 
 
At this point, using the operator $P_{(s),t}$ we introduce the  counterpart of the Riesz operator \eqref{rpotG0} above
\begin{equation}\label{conformalriesz}
\mathscr I_{(2s)} u(g) = \frac{1}{\G(s)} \int_0^\infty t^{s-1} P_{(s),t} u(g) dt.
\end{equation}
We stress that, unlike \eqref{bala} and \eqref{rpotG0}, which are both defined using the same semigroup $P_t$, in \eqref{RTs} and \eqref{conformalriesz} two different modified heat operators appear. We are now in a position to state our first main result which represents the conformal counterpart of \eqref{caposotto}. 
 
\begin{theorem}\label{T:I2sdue0}
For every $0<s<1$ and $u\in C_0^\infty(\bG)$ one has
$$\left( \mathscr  I_{(2s)} \circ \mathscr L_s \right) u= \left( \mathscr L_s \circ \mathscr I_{(2s)} \right) u= u.$$
\end{theorem}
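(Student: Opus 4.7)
The plan is to mirror the argument for Proposition \ref{P:Isuno}, which establishes $\mathscr I^{(2s)}\circ\mathscr L^s=I$ using the semigroup identity $P_t\circ P_\tau=P_{t+\tau}$, a scaling substitution, a Frullani integral, Euler's Beta integral, and the reflection formula $\G(s)\G(1-s)=\pi/\sin(\pi s)$. The structural obstacle specific to the conformal setting is that $P_{(s),t}$ and $P_{(-s),t}$ are distinct families, so the naive semigroup law is not available; it has to be replaced by a spectral identity extracted from the explicit formula \eqref{poisson}.

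Substituting \eqref{RTs} into \eqref{conformalriesz} and invoking Fubini (legitimate on $C_0^\infty(\bG)$ via the kernel estimates for $\mathscr K_{(\pm s)}$ to be established earlier in the paper), the identity reduces to
\[
-\frac{s}{\G(s)\G(1-s)}\int_0^\infty\!\!\int_0^\infty t^{s-1}\tau^{-1-s}\,\bigl[(P_{(s),t}\circ P_{(-s),\tau})u(g)-P_{(s),t}u(g)\bigr]\,d\tau\,dt = u(g).
\]
The decisive step is to identify $P_{(s),t}\circ P_{(-s),\tau}$. I would take the partial Fourier transform in the vertical variable $\sigma\in\R^k$: on each fibre $\la$, the kernel \eqref{poisson} factors as a Mehler kernel for the harmonic oscillator attached to the Schr\"odinger representation of $\mathscr L$ at $\la$, multiplied by the ``conformal weight'' $(|\la|/\sinh|\la|)^{m/2+1\mp s}$. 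Both $P_{(s),t}$ and $P_{(-s),\tau}$ become, fibrewise, functions of that single oscillator, so their composition is the plain product of symbols, and the inverse Fourier transform in $\sigma$ then delivers the required kernel on $\bG$.

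With the composition in hand, the Frullani and Beta computations from the non-conformal case go through, but now interact with the conformal weight. The key quantitative claim is that integrating $(|\la|/\sinh|\la|)^{m/2+1-s}$ times a Mehler exponential against $t^{s-1}$ produces, after the Gamma integral $\int_0^\infty t^{s-1}e^{-at}\,dt=\G(s)a^{-s}$, precisely the reciprocal of the symbol of $\mathscr L_s$ dictated by \eqref{bfmH}. This is a Gamma-function identity matching the conformal weight with the ratio $\G(\cdot+\tfrac{1+s}{2})/\G(\cdot+\tfrac{1-s}{2})$, and uses the reflection and duplication formulae. The companion identity $\mathscr L_s\circ\mathscr I_{(2s)}u=u$ then follows at no extra cost, since on each spectral fibre both operators are functions of the same oscillator and therefore commute.

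The main obstacle is precisely the combination of this composition formula and the Gamma bookkeeping that closes the argument. The fine-tuned exponent $m/2+1\mp s$ in \eqref{poisson} is exactly what is needed for the Gamma factors to collapse into the symbol of $\mathscr L_s$; proving this requires a careful match between the conformal weight and the Gamma ratio in \eqref{bfmH}, and absolute convergence of the three-fold integrals in $(t,\tau,\la)$ must be verified to legitimize both Fubini and the spectral interchange on $u\in C_0^\infty(\bG)$.
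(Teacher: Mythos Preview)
Your outline conflates two different routes and, as written, neither closes. The first is to mimic Proposition \ref{P:Isuno}: compute the composition $P_{(s),t}\circ P_{(-s),\tau}$, change variables, and let the Frullani/Beta argument run. The obstruction is that the composed kernel is \emph{not} a function of $t+\tau$ alone. On the $\la$-fibre the symbol factorises as
\[
\left(\frac{t|\la|}{\sinh t|\la|}\right)^{1-s}\left(\frac{\tau|\la|}{\sinh \tau|\la|}\right)^{1+s}\times\bigl(\text{Mehler kernel at time }t+\tau\bigr),
\]
so the conformal weights depend on $t$ and $\tau$ separately. After the substitution $(t,\tau)\mapsto(v,\rho)=(t+\tau,t/\tau)$ the inner integral no longer decouples from $\rho$, and the Beta integral $\int_0^\infty \rho^{s-1}(1+\rho)^{-1}\,d\rho$ does not simply appear. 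This is precisely the difficulty the paper isolates: the $\partial_v$ piece does reproduce the non-conformal computation and yields $u(g)$, but there is an extra $\partial_\rho$ term with no analogue in Proposition \ref{P:Isuno}. The paper disposes of it via Lemma \ref{lemmaMonster}, a hand-built cancellation identity $A(s,\mu)=0$ for an explicit one-variable integral in $\rho$; this is the missing idea in your sketch.

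Your second route, computing the symbol of $\mathscr I_{(2s)}$ on the Hermite spectrum and matching it against the reciprocal of the Gamma ratio \eqref{bfmH}, is the Roncal--Thangavelu approach \cite{RTaim}, which the paper explicitly avoids. It is valid, but the ``Gamma integral $\int_0^\infty t^{s-1}e^{-at}\,dt=\Gamma(s)a^{-s}$'' you invoke is not the one that occurs: because of the weight $(t|\la|/\sinh t|\la|)^{1-s}$, the relevant identity is the Beta-type integral $\int_0^\infty (\sinh u)^{s-1}e^{-au}\,du = 2^{1-s}B\bigl(\tfrac{a+1-s}{2},s\bigr)$, which is what produces the ratio $\Gamma(\cdot+\tfrac{1-s}{2})/\Gamma(\cdot+\tfrac{1+s}{2})$. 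If you go this way you are essentially reproving the equivalence of \eqref{RTs} and \eqref{bfmH}, which the paper cites rather than proves; the paper's own contribution is the purely PDE/kernel argument culminating in Lemma \ref{lemmaMonster}, with no appeal to \eqref{bfmH} or the Schr\"odinger representation.
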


We emphasise that our proof of Theorem \ref{T:I2sdue0}, which is given in Section \ref{S:inv}, is based on some lemmas of independent interest which are inspired to semigroup methods. In particular, in Lemma \ref{kernallazzamento} we establish a key representation formula for the group convolution of the intertwined kernels $\mathscr K_{(s)}(\cdot,t)$ and $\mathscr K_{(-s)}(\cdot,\tau)$. We also mention Lemma \ref{lemmaMonster}, which establishes a remarkable cancellation property. 
In the following theorem, which is our second main result, we compute the kernel of the operator \eqref{conformalriesz} explicitly. 

\begin{theorem}\label{T:poisson}
Let $\bG$ be a group of Heisenberg type. The following statements hold:
\begin{itemize}
\item[(i)] With $\mathscr K_{(s)}$ defined by \eqref{poisson}, for any $0<s\le 1$ one has 
\begin{equation}\label{Esottos}
\mathscr E_{(s)}(z,\sigma) \overset{def}{=} \frac{1}{\G(s)} \int_0^\infty t^{s-1} \mathscr K_{(s)}((z,\sigma),t) dt  = \frac{C_{(s)}(m,k)}{N(z,\sigma)^{Q-2s}},
\end{equation}
where $Q = m + 2k$ is the homogeneous dimension of $\bG$, $N(z,\sigma) = (|z|^4 + 16 |\sigma|^2)^{1/4}$ is the natural gauge, and we have let
\begin{equation}\label{C}
C_{(s)}(m,k) = \frac{2^{\frac m2 + 2k-3s-1} \G(\frac 12(\frac m2+1-s)) \G(\frac 12(\frac m2 + k -s))}{\pi^{\frac{m+k+1}2} \G(s)}.
\end{equation}
\item[(ii)] The distribution $\mathscr E_{(s)}\in C^\infty(\bG\setminus\{e\})\cap L^1_{loc}(\bG)$, and it provides a fundamental solution of $\mathscr L_s$ with pole at the group identity $e\in \bG$ and vanishing at infinity.
\end{itemize}
\end{theorem}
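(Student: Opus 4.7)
My plan for part (i) is to compute the partial Fourier transform of $\mathscr E_{(s)}(z,\cdot)$ in the vertical variable $\sigma\in\R^k$, match it against the partial Fourier transform of the candidate $C\cdot N^{2s-Q}$, and invoke Fourier injectivity. Applying $\int_{\R^k}e^{-i\langle\sigma,\lambda\rangle}(\cdot)\,d\sigma$ to \eqref{poisson}--\eqref{Esottos} collapses the inner $\lambda'$-integral to a delta at $\lambda'=-t\lambda$; after the rescaling $u=t|\lambda|$ the exponents of $u$ telescope, and one arrives at
\[
\widehat{\mathscr E_{(s)}}(z,\lambda)=\frac{|\lambda|^{m/2-s}}{\Gamma(s)(4\pi)^{m/2}}\int_0^\infty \frac{e^{-|z|^2|\lambda|/(4\tanh u)}}{(\sinh u)^{m/2+1-s}}\,du.
\]

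I would then evaluate that remaining integral by the successive substitutions $w=\tanh u$ and $y=1/w$, whose key virtue is that all residual powers of $y$ cancel out, reducing the integral to
\[
\int_1^\infty (y^2-1)^{(m/2-1-s)/2}\,e^{-|z|^2|\lambda|\,y/4}\,dy.
\]
This is the classical integral representation of $K_{(m-2s)/4}(|z|^2|\lambda|/4)$, with the Bessel index exactly matching $\alpha-k/2=(m-2s)/4$ when $\alpha=(Q-2s)/4$. Collecting gives $\widehat{\mathscr E_{(s)}}(z,\lambda)=(\textrm{const})\cdot |z|^{-(m-2s)/2}|\lambda|^{(m-2s)/4}K_{(m-2s)/4}(|z|^2|\lambda|/4)$.

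On the other hand, the partial Fourier transform in $\sigma$ of $(|z|^4+16|\sigma|^2)^{-\alpha}$ is computed by standard Bochner subordination: use $X^{-\alpha}=\Gamma(\alpha)^{-1}\int_0^\infty t^{\alpha-1}e^{-tX}\,dt$, evaluate the resulting Gaussian in $\sigma$, and apply $\int_0^\infty t^{\nu-1}e^{-pt-q/t}\,dt=2(q/p)^{\nu/2}K_\nu(2\sqrt{pq})$. The index $\alpha-k/2=(m-2s)/4$, the argument $|z|^2|\lambda|/4$, and the power of $|z|$ all align with the previous computation; matching the remaining scalar prefactors (and using $\Gamma((m+2-2s)/4)\,\Gamma((m+2k-2s)/4)=\Gamma((m/2+1-s)/2)\,\Gamma((m/2+k-s)/2)$) produces precisely the constant $C_{(s)}(m,k)$ in \eqref{C}. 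Fourier injectivity then yields \eqref{Esottos}. The main obstacle is the meticulous bookkeeping of constants --- powers of $2$ and $\pi$, intertwined Gamma factors --- together with justifying the Fubini-type interchanges by absolute convergence for $z\neq 0$.

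Part (ii) is then essentially a corollary of (i) and of Theorem~\ref{T:I2sdue0}. The closed form $\mathscr E_{(s)}=C_{(s)}(m,k)\,N^{2s-Q}$ is smooth on $\bG\setminus\{e\}$ since $N$ is, and vanishes at infinity because $Q-2s>0$; local integrability follows from polar integration adapted to the group dilations, since $|\{N<R\}|\asymp R^Q$ implies $\int_{\{N<1\}} N^{2s-Q}\,dg\asymp\int_0^1 r^{2s-1}\,dr<\infty$. Finally, the Riesz potential $\mathscr I_{(2s)}$ is the group convolution with the locally integrable kernel $\mathscr E_{(s)}$, so the identity $\mathscr L_s\circ\mathscr I_{(2s)}=I$ on $C_0^\infty(\bG)$ provided by Theorem~\ref{T:I2sdue0} translates, in the distributional sense, into $\mathscr L_s\mathscr E_{(s)}=\delta_e$, giving the fundamental solution property with pole at the group identity.
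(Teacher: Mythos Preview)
Your proof of part (i) is correct but follows a genuinely different route from the paper's. The paper keeps the $\lambda$-integral intact, passes to spherical coordinates in $\R^k$ via Proposition~\ref{P:FTspheren} (thereby introducing a Bessel function $J_{k/2-1}$), evaluates the $t$-integral with Gegenbauer's formula \eqref{ohoh} to produce a Gauss hypergeometric function, and then chains together Kummer's relation \eqref{hyperG}, Bateman's identity \eqref{hyperGint} and the special value \eqref{fs6} to collapse everything to a power of the gauge. Your approach bypasses this entire hypergeometric machinery: the partial Fourier transform in $\sigma$ kills the $\lambda'$-integral directly, the rescaling $u=t|\lambda|$ produces the same ``miracle'' cancellation the paper highlights (their vanishing of the $r$-power is your exponent $s-1-m/2+(m/2+1-s)=0$), and the substitutions $w=\tanh u$, $y=1/w$ reduce matters to the standard integral representation of $K_{(m-2s)/4}$. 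Matching against the $K_\nu$ obtained from the subordination formula applied to $(|z|^4+16|\sigma|^2)^{-\alpha}$ is then a clean one-line identification.

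Your route is more elementary in that it needs only Gaussian integrals and two classical $K_\nu$ identities rather than the $J_\nu$/hypergeometric apparatus, and it treats $k=1$ and $k\ge 2$ uniformly (the paper must separate these cases because the sphere formula requires $k\ge 2$). The paper's approach, on the other hand, runs in parallel with its proof of Theorem~\ref{T:fss} for $\mathscr L^s$, which lets the reader see precisely where gauge symmetry emerges in the conformal case and fails in the non-conformal one --- a structural insight your direct computation does not display as transparently. Your treatment of part~(ii) agrees with the paper's: it is indeed a corollary of the explicit formula and Theorem~\ref{T:I2sdue0}.
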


The reader should note the notable similarity between \eqref{Esottos} and the well-known result, see e.g. \cite[Theorem 8.4]{Gft}, stating that the function
$
E_s(x) = \frac{c(n,s)}{|x|^{n-2s}}
$
is the fundamental solution with pole at $x=0$ of the nonlocal operator $(-\Delta)^s$.
 Theorem \ref{T:poisson} provides a first example of the  remarkable connection between the heat equation in a group of Heisenberg type and its conformal geometry. 
We mention that  \eqref{Esottos} was first obtained with a completely different approach, based on the Fourier analysis on groups, by Roncal and Thangavelu in \cite[Section 3]{RTaim}. For instance, in order to invert the operator $\mathscr L_s$ they rely on the deep formula of Cowling-Haagerup \cite{CH} which gives the group Fourier transform of the kernels  $\mathscr E_{(-s)}$ obtained by changing $s$ into $-s$ in \eqref{Esottos}. Instead, we deduce the invertibility of $\mathscr L_s$ directly from our Theorem \ref{T:I2sdue0}, and then prove \eqref{Esottos}, \eqref{C} independently. Another difference  is that we treat all groups of Heisenberg type at once, whereas in \cite{RTaim} the authors first establish the Heisenberg group case, and then in \cite{RT} they use partial Radon transform and several facts from Lie theory to ultimately reduce matters to the case of $\Hn$.

Concerning the question in the opening of this paper, at this moment it seems appropriate to state separately  the special case $s =1$ of Theorem \ref{T:poisson}. As we have already mentioned, the next result provides a heat equation proof of Folland's formula \eqref{T:folland} and its generalisation in the cited paper \cite{Ka} by Kaplan. 

\begin{corollary}[Discovering the gauge from the heat]\label{C:Htypes}
Let $\bG$ be a group of Heisenberg type. Then, the fundamental solution of $-\mathscr L$ is given by
\[
\int_0^\infty p((z,\sigma),t) dt  = \frac{2^{\frac m2 + 2k-2}\G(\frac m4) \G(\frac 12(\frac m2 + k -1))}{\pi^{\frac{m+k+1}2}} \left(|z|^4 + 16 |\sigma|^2\right)^{-\frac{1}{2}(\frac m2 + k-1)}.
\] 
\end{corollary}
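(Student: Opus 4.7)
The plan is to obtain the corollary as the endpoint $s = 1$ of Theorem~\ref{T:poisson}. The crucial input, already announced in the discussion preceding \eqref{Pst}, is that in the local case $\mathscr K_{(1)}$ coincides with the Gaveau--Hulanicki--Cygan heat kernel $p$; one checks this directly by setting $s = 1$ in \eqref{poisson}, which turns the exponent on $\bigl(\tfrac{|\lambda|}{\sinh|\lambda|}\bigr)$ into $m/2$, matching the well-known integral representation of $p$ on a group of Heisenberg type. Since $\G(1) = 1$, the defining formula \eqref{Esottos} at $s = 1$ then reads $\mathscr E_{(1)}(z,\sigma) = \int_0^\infty p((z,\sigma),t)\,dt$, which is exactly the left-hand side of the corollary.

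For the right-hand side, I invoke Theorem~\ref{T:poisson}(i) at $s = 1$ to get $\mathscr E_{(1)}(z,\sigma) = C_{(1)}(m,k)\, N(z,\sigma)^{-(Q-2)}$, and then re-express both the gauge factor and the constant. Using $Q = m + 2k$ together with $N(z,\sigma) = (|z|^4 + 16|\sigma|^2)^{1/4}$, one has
\[
N(z,\sigma)^{Q-2} = \bigl(|z|^4 + 16|\sigma|^2\bigr)^{(Q-2)/4} = \bigl(|z|^4 + 16|\sigma|^2\bigr)^{(m/2 + k - 1)/2},
\]
which is the gauge factor appearing in the statement. Substituting $s = 1$ in \eqref{C}, the first gamma collapses to $\G(m/4)$, the second becomes $\G\bigl(\tfrac{1}{2}(\tfrac{m}{2} + k - 1)\bigr)$, the $\G(s)$ in the denominator becomes $1$, and the remaining powers of $2$ and $\pi$ assemble into the announced prefactor.

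The assertion that the resulting function is a fundamental solution of $-\mathscr L$ is either inherited from Theorem~\ref{T:poisson}(ii) upon observing that $\mathscr L_s$ reduces to $-\mathscr L$ at $s = 1$, or, more elementarily, follows from the classical semigroup identity $-\mathscr L \int_0^T P_t u\,dt = u - P_T u$ for $u \in C^\infty_0(\bG)$, by letting $T \to \infty$ and using stochastic completeness of $P_t$ together with the decay at infinity of the integrated heat kernel. Since all the genuine analytic work is packaged inside Theorem~\ref{T:poisson}, I do not expect a substantive obstacle here: the corollary amounts to pure bookkeeping of exponents and gamma factors, together with the identification $\mathscr K_{(1)} = p$.
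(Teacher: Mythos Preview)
Your proposal is correct and follows exactly the approach the paper intends: Corollary~\ref{C:Htypes} is explicitly presented as the special case $s=1$ of Theorem~\ref{T:poisson}, and your argument simply spells out that specialization together with the identification $\mathscr K_{(1)}=p$. One minor bookkeeping caveat worth flagging: substituting $s=1$ into \eqref{C} yields the exponent $\tfrac{m}{2}+2k-4$ on the power of $2$, whereas the corollary prints $\tfrac{m}{2}+2k-2$, so the constants do not literally ``assemble'' as you assert---this is a typographical discrepancy in the paper rather than a flaw in your method.
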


As we have said, the present work is purely based on the analysis of various heat kernels using techniques inspired by pde's and semigroup theory. The common starting point of our analysis are the parabolic extension problems associated with both nonlocal operators $\mathscr L^s$ and $\mathscr L_s$. While we defer a detailed discussion to Section \ref{S:secdescr}, to provide the reader with some perspective here we confine ourselves to recall such problems. Given a function $u\in C^\infty_0(\bG\times \R_t)$, the extension problem for  $(\p_t - \mathscr L)^s$ consists in finding $U\in C^\infty(\bG\times \R_t\times \R^+_y)$ such that
\begin{equation}\label{extHs0}
\begin{cases}
\mathfrak P^{(s)} U \overset{def}{=} \frac{\p^2 U}{\p y^2}  + \frac{1-2s}y \frac{\p U}{\p y} + \mathscr L U - \frac{\p U}{\p t} = 0,
\\ 
U(g,t,0) = u(g,t).
\end{cases}
\end{equation}
Here, we have let $g = (z,\sigma)\in \bG$, and we have denoted by $y>0$ the extension variable.
The conformal counterpart of \eqref{extHs0} is formulated in a similar way, but the problem is substantially different.
Given a function $u\in C^\infty_0(\bG\times \R_t)$, find a function $U\in C^\infty(\bG\times  \R_t \times \R^+_y)$ such that
\begin{equation}\label{parext0}
\begin{cases}
\mathfrak P_{(s)} U \overset{def}{=} \frac{\p^2 U}{\p y^2}  + \frac{1-2s}y \frac{\p U}{\p y} + \frac{y^2}4 \Delta_\sigma U+ \mathscr L U - \frac{\p U}{\p t} = 0,\ \ \ \ \ \text{in}\ \bG\times  \R_t \times \R^+_y,
\\
U(g,t,0) = u(g,t).
\end{cases}
\end{equation}
The presence of the differential operator $\frac{y^2}4 \Delta_\sigma$ is what makes \eqref{parext0} so diverse from \eqref{extHs0}, but it is also what gives a geometric meaning to \eqref{parext0}. To justify this statement we recall that the fundamental solution of the operator $\mathfrak P^{(s)}$ in \eqref{extHs0} is given by
$$q^{(s)}(g,g',t,y) \overset{def}{=} g^{(s)}(y,t)  p(g,g',t),$$
where $p(g,g',t)$ is the heat kernel in $\bG$, and we have let $g^{(s)}(y,t) =(4\pi t)^{-(1-s)} e^{-\frac{y^2}{4t}}$ denote the heat kernel in the space with fractal dimension $\R_y^{2(1-s)}\times \R^+_t$. On the other hand, the fundamental solution of the operator $\mathfrak P_{(s)}$ in \eqref{parext0} is given by
the function 
\begin{align}\label{parBfs0}
 q_{(s)}((z,\sigma),t,y) & =  \frac{2^k}{(4\pi t)^{\frac{m}2 +k +1-s}} \int_{\R^k} e^{- \frac it \langle \sigma,\la\rangle}   \left(\frac{|\la|}{\sinh |\la|}\right)^{\frac m2+1-s} e^{-\frac{|z|^2 +y^2}{4t}\frac{|\la|}{\tanh |\la|}} d\la.
\end{align}
The origin of this function is in the fact that $\mathfrak P_{(s)}$ is to be viewed as a parabolic Baouendi-Grushin operator (see \eqref{salahpar}) in the  space with fractal dimension $\R^{m+2(1-s)}\times \R^k\times (0,\infty)$ and whose fundamental solution can be explicitly computed, see Proposition \ref{P:BGpar}. Once this is recognised, then the connection  between the kernel \eqref{poisson} of the operators $P_{(s),t}$ in the conformal Riesz operator \eqref{conformalriesz} and the function \eqref{parBfs0} is given by the formula
\begin{equation}\label{conn}
\mathscr K_{(s)}((z,\sigma),t) = (4\pi t)^{1-s} q_{(s)}((z,\sigma),t,0).
\end{equation}
In other words, $\mathscr K_{(s)}$ is an appropriately rescaled restriction to the thin space $y = 0$ of the Baouendi-Grushin kernel $q_{(s)}$ in \eqref{parBfs0}. 

To unravel the conformal geometry in \eqref{parBfs0} we now note that, from general principles, we know that
\begin{equation}\label{frake}
\mathfrak e_{(s)}((z,\sigma,y) \overset{def}{=} \int_0^\infty q_{(s)}((z,\sigma),t,y) dt
\end{equation}
is a fundamental solution with pole at the origin of the time-independent part of $\mathfrak P_{(s)}$, i.e., the conformal extension operator
\begin{equation}\label{tiepa}
\mathfrak L_{(s)} = \frac{\p^2}{\p y^2} + \frac{1-2s}{y} \frac{\p }{\p y} + \frac{y^2}4 \Delta_\sigma + \mathscr L.
\end{equation}
This observation leads us to state the following result.

\begin{theorem}\label{T:thick}
Let $0<s\le 1$. In any group of Heisenberg type $\bG$, the distribution in the thick space $\bG\times \R^+_y$ defined by \eqref{frake} is given by
\begin{equation}\label{qmasomeno}
\mathfrak e_{(s)}((z,\sigma),y) = \frac{\G(s)}{(4\pi)^{1-s}} C_{(s)}(m,k)\ ((|z|^2+y^2)^2+16|\sigma|^2)^{-\frac 12(\frac m2 + k - s)},
\end{equation}
where $C_{(s)}(m,k)$ is the constant in \eqref{C}.
An equation similar to \eqref{qmasomeno} holds if we replace $s$ with $-s$, provided that $\G(s)$ is replaced by $|\G(-s)|$.
\end{theorem}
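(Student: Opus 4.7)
The plan reduces Theorem \ref{T:thick} to Theorem \ref{T:poisson}(i) via an elementary substitution of parameters. A side-by-side inspection of \eqref{poisson} and \eqref{parBfs0} shows that $\mathscr K_{(s)}((z,\sigma),t)$ depends on $z$ only through $|z|^2$, and that the parabolic Baouendi-Grushin kernel factorizes as
$$q_{(s)}((z,\sigma),t,y) = \frac{1}{(4\pi t)^{1-s}}\, F_s\bigl(|z|^2 + y^2,\sigma,t\bigr),$$
where $F_s(r,\sigma,t)$ denotes the function obtained from the right-hand side of \eqref{poisson} by replacing $|z|^2$ with the non-negative scalar $r$. This is the thick-space counterpart of \eqref{conn}. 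Integrating in $t$ and pulling out the constant prefactor,
$$\mathfrak e_{(s)}((z,\sigma),y) = \frac{1}{(4\pi)^{1-s}} \int_0^\infty t^{s-1}\, F_s\bigl(|z|^2+y^2,\sigma,t\bigr)\,dt.$$

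Next I invoke Theorem \ref{T:poisson}(i), whose content is exactly the closed-form evaluation of $r \mapsto \int_0^\infty t^{s-1} F_s(r,\sigma,t)\,dt$ as $\G(s)\,C_{(s)}(m,k)\,(r^2 + 16|\sigma|^2)^{-\frac14(Q-2s)}$, valid as an identity in the pair $(r,\sigma)\in\R_{\ge 0}\times\R^k$. Since the $t$-integration does not touch $y$, substituting $r = |z|^2 + y^2$ is legitimate and gives
$$\int_0^\infty t^{s-1}\, F_s\bigl(|z|^2+y^2,\sigma,t\bigr)\,dt = \G(s)\,C_{(s)}(m,k)\,\bigl((|z|^2+y^2)^2 + 16|\sigma|^2\bigr)^{-\frac14(Q-2s)}.$$
Since $Q=m+2k$ forces $\tfrac14(Q-2s) = \tfrac12(\tfrac m2 + k - s)$, combining the last two displays yields exactly \eqref{qmasomeno}. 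The Gaussian $\exp(-(|z|^2+y^2)|\la|/(4t\tanh|\la|))$ makes both the Fubini interchange and the parameter substitution legitimate away from the singular point.

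For the corresponding statement with $s$ replaced by $-s$, the same scheme applies verbatim to $q_{(-s)}$: a direct comparison yields $q_{(-s)}((z,\sigma),t,y) = (4\pi t)^{-(1+s)} F_{-s}(|z|^2 + y^2,\sigma,t)$, and the same order-of-integration switch together with the elementary identity $\int_0^\infty t^{-s-1-\frac m2-k} e^{-b/t}\,dt = b^{-(s+\frac m2+k)}\G(s+\frac m2+k)$ reduces everything to the very same $\R^k$-integral that appears inside the proof of Theorem \ref{T:poisson}(i), under the formal substitution $s\mapsto -s$. Since $\G(-s)<0$ for $s\in(0,1)$ while $\mathfrak e_{(-s)}$ is non-negative as a time integral of a non-negative kernel, the correct normalization carries $|\G(-s)|$ rather than $\G(-s)$.

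The main obstacle, were this substitution trick unavailable, would be the explicit evaluation of the oscillatory-hyperbolic integral in $\la\in\R^k$; but that computation has already been carried out once and for all inside the proof of Theorem \ref{T:poisson}(i), and we harvest it here as a black box. The Fubini interchange itself is routine thanks to the Gaussian decay in the variable $|z|^2+y^2$ whenever $(z,\sigma,y)$ is away from the origin.
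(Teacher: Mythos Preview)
Your proposal is correct and takes essentially the same approach as the paper: both proofs observe that the integral defining $\mathfrak e_{(s)}$ coincides, up to the multiplicative factor $\frac{\G(s)}{(4\pi)^{1-s}}$ and the replacement of $|z|^2$ by $|z|^2+y^2$, with the integral already computed in the proof of Theorem~\ref{T:poisson}(i). You package this observation a bit more explicitly by introducing the auxiliary function $F_s(r,\sigma,t)$, but the underlying reduction is identical to the paper's.
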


\begin{remark}\label{R:poisson} 
The reader should note that, changing $s$ into $-s$ in the constant $C_{(s)}(m,k)$, we  obtain from \eqref{qmasomeno} exactly the same number in formula (1.9) in \cite[Theorem 1.2]{RT}. This can be easily recognised by using in their formula Legendre's duplication property for the gamma function recalled in \eqref{prod} below. 
\end{remark}

In closing, we mention that in the Heisenberg group $\Hn$ the time-independent extension problem for the operator \eqref{tiepa} (see \eqref{Ext} below) was first introduced and solved by Frank, Gonzalez, Monticelli and Tan in \cite{FGMT} using harmonic analysis and scattering theory. The same extension problem was also studied by M\"ollers, Orsted and Zhang in \cite{MOZ}. Exploiting the conformal invariance of the differential operators they used unitary representation theory of reductive Lie groups to solve the problem.
A point of view different from these authors was taken up by Roncal and Thangavelu in their cited works \cite{RTaim, RT} on optimal Hardy inequalities. In these papers the authors use the parabolic extension problem \eqref{parext0} and Fourier analysis on groups to establish an explicit Poisson representation formula for the solution of the time-independent problem \eqref{Ext}. One of the purposes of the present work is to further develop the point of view in \cite{RTaim, RT} from a  different semigroup perspective with the objective to unify the treatment of the very diverse nonlocal operators $\mathscr L^s$ and $\mathscr L_s$.

A brief discussion about the organisation of our work seems in order. In Section \ref{S:prelim} we collect various known facts that will be needed in the main body of the paper. 
In Section \ref{S:secdescr} we describe in detail the evolutive extension problems for $\mathscr L^s$ and $\mathscr L_s$ from a unifying perspective. It is there that we introduce the fundamental solution $q_{(s)}$ in \eqref{parBfs0}, its companion $q_{(-s)}$, and the intertwined operators $P_{(\pm s),t}$ in \eqref{Pst}. Section \ref{S:inv} is entirely devoted to proving Theorem \ref{T:I2sdue0}. In Section \ref{S:one} we prove Theorems \ref{T:poisson} and \ref{T:thick}.

\medskip

\noindent \textbf{Acknowledgment:} We are grateful to M. Cowling, G. Folland, A. Kor\'anyi, C. Morpurgo and S. Thangavelu for providing us with some interesting historical overviews during the preparation of this work. Special thanks go to S. Thangavelu for his insightful discussions of his joint papers with L. Roncal.\\
We also thank the anonymous referee for his/her very careful reading of the original manuscript and constructive comments that have contributed to improve the presentation of the paper.


\section{Preliminaries}\label{S:prelim}

In this section we gather some preliminary material which will be needed in the rest of the paper. We begin with recalling a beautiful formula from classical analysis, namely the Fourier transform of the measure carried by the unit sphere. In what follows we denote by $d\omega$ the $(k-1)$-dimensional surface measure on $\mathbb S^{k-1}$, and by $J_\nu$ the Bessel function of the first kind and order $\nu\in \mathbb C$. For $\Re\nu>-\frac12$ the Poisson representation of such function is  
\[
J_\nu(z)
=\frac{1}{\G(\frac12)\G(\nu+\frac{1}{2})}\left(\frac{z}2\right)^\nu\int^1_{-1}
e^{izt}(1-t^2)^{\frac{2\nu-1}2}dt,
\]
where $\G(x)$ denotes the Euler gamma function, see \cite{Wa}. The following classical formula plays an important role in harmonic analysis and pde's, especially in connection with the restriction problem for the Fourier transform and the Cauchy problem for the wave equation. For its proof see p. 154 in \cite{SW}.

\begin{proposition}\label{P:FTspheren}
Assume that $k\ge 2$. Then, for any $\xi\in \R^k$ one has
\[
\int_{\mathbb S^{k-1}} e^{-2\pi i \langle \xi,\omega\rangle} d\omega = 2\pi |\xi|^{-\frac{k}2+ 1}
J_{\frac{k}2-1}(2\pi |\xi|).
\]
\end{proposition}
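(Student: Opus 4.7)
The plan is a direct reduction to the Poisson integral representation of $J_\nu$ recalled just above the statement. Since the integrand in
$$I(\xi)\;:=\;\int_{\mathbb S^{k-1}} e^{-2\pi i \langle \xi,\omega\rangle} d\omega$$
is invariant under the replacement $\omega\mapsto R\omega$ for any $R\in O(k)$, after the change of variable $\omega\mapsto R^{t}\omega$ one sees that $I(\xi)=I(R\xi)$ for every rotation $R$. Hence $I(\xi)$ depends only on $r:=|\xi|$, and, choosing $R$ so that $R\xi=r\,e_k$, we may replace $\langle\xi,\omega\rangle$ by $r\,\omega_k$.

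Next, I would foliate $\mathbb S^{k-1}$ by the horizontal slices $\{\omega_k=t\}$, $t\in[-1,1]$, each of which is a copy of $\mathbb S^{k-2}$ of radius $\sqrt{1-t^{2}}$. A standard computation of the surface-area element (essentially Archimedes' formula, or a direct Jacobian computation for the parametrisation $\omega=(\sqrt{1-t^{2}}\,\omega',t)$ with $\omega'\in\mathbb S^{k-2}$) gives
$$d\omega \;=\; (1-t^{2})^{\frac{k-3}{2}}\,dt\,d\omega'.$$
Because the phase $e^{-2\pi i r t}$ no longer depends on $\omega'$, the integration over $\mathbb S^{k-2}$ only produces the factor $|\mathbb S^{k-2}|=2\pi^{(k-1)/2}/\G(\tfrac{k-1}{2})$. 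This reduces $I(\xi)$ to the one-dimensional integral
$$I(\xi) \;=\; \frac{2\pi^{(k-1)/2}}{\G\!\left(\frac{k-1}{2}\right)}\int_{-1}^{1} e^{-2\pi i r t}\,(1-t^{2})^{\frac{k-3}{2}}\,dt.$$

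Finally, I would recognise the remaining integral as a Bessel kernel. Since $(1-t^{2})^{(k-3)/2}$ is even in $t$, the sign of the phase is irrelevant and the integral equals $\int_{-1}^{1} e^{i(2\pi r)t}(1-t^{2})^{(k-3)/2}dt$. Applying the Poisson representation of $J_\nu$ recalled in the excerpt with $\nu=\tfrac{k}{2}-1$ (so that $\nu-\tfrac12=\tfrac{k-3}{2}$ and $\G(\nu+\tfrac12)=\G(\tfrac{k-1}{2})$) and $z=2\pi r$ yields
$$\int_{-1}^{1} e^{-2\pi i r t}(1-t^{2})^{\frac{k-3}{2}}dt \;=\; \sqrt{\pi}\,\G\!\left(\tfrac{k-1}{2}\right)(\pi r)^{1-\frac{k}{2}} J_{\frac{k}{2}-1}(2\pi r).$$
Substituting this into the previous display, the factors $\G(\tfrac{k-1}{2})$ cancel and $2\pi^{(k-1)/2}\cdot\sqrt{\pi}\cdot\pi^{1-k/2}=2\pi$, leaving exactly $2\pi\,|\xi|^{-\frac{k}{2}+1}J_{\frac{k}{2}-1}(2\pi|\xi|)$, which is the claim. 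There is no real obstacle: the only care needed is in tracking the constants $|\mathbb S^{k-2}|$, $\G(\tfrac12)=\sqrt{\pi}$ and the powers of $\pi r$ that arise from matching $(z/2)^\nu$ to $(\pi r)^{k/2-1}$.
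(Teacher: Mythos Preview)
Your argument is correct and is precisely the classical derivation: rotational invariance reduces to a single integral over the last coordinate, and the Poisson representation of $J_{\frac{k}{2}-1}$ closes the computation. The paper does not supply its own proof of this proposition but simply refers to p.~154 of Stein--Weiss \cite{SW}, where exactly this argument appears; your write-up is thus a faithful reproduction of the cited proof, with the constants checked carefully.
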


Another family of special functions that will be needed in this paper are the Gauss hypergeometric functions. We recall the definition of the Pochammer symbols
\[
\alpha_0 = 1,\ \ \ \ \alpha_k \overset{def}{=} \frac{\G(\alpha + k)}{\G(\alpha)} = \alpha(\alpha+1)...(\alpha + k -1),\ \ \ \ \ \ \ \ \ \ k\in \mathbb N. 
\] 
Notice that, since the gamma function has a pole in $z=0$, we have $0_k = 1$ if $k = 0$, $0_k = 0$ for $k\ge 1$.

\begin{definition}\label{D:hyper}
Let $p, q\in \mathbb N \cup\{0\}$ be such that $p\le q+1$, and let $\alpha_1,...,\alpha_p$ and $\beta_1,...,\beta_q$ be given parameters such that $-\beta_j\not\in \mathbb N \cup\{0\}$ for $j=1,...,q$. Given a number $z\in \mathbb C$, the power series
\[
_p F_q(\alpha_1,...,\alpha_p;\beta_1,...,\beta_q;z) = \sum_{k=0}^\infty \frac{(\alpha_1)_k . . . (\alpha_p)_k}{(\beta_1)_k . . . (\beta_q)_k} \frac{z^k}{k!}
\]
is called the \emph{generalized hypergeometric function}. When $p = 2$ and $q=1$, then the function 
\begin{equation}\label{F}
_2 F_1(\alpha_1,\alpha_2;\beta_1;z) = \frac{\G(\beta_1)}{\G(\alpha_1)\G(\alpha_2)} \sum_{k=0}^\infty \frac{\G(k + \alpha_1) \G(k+\alpha_2)}{\G(k+\beta_1) k!} z^k,
\end{equation}
is the \emph{Gauss' hypergeometric function}, and it is usually denoted by $F(\alpha_1,\alpha_2;\beta_1;z)$.
\end{definition}
One special case that we will need is the following simple, yet important  fact, which can be directly verified from \eqref{F} (see also formula (4) on p. 101 in \cite{E})
\begin{equation}\label{fs6}
F(\alpha,\beta;\beta;-a) =\ _1F_0(\alpha;-a) = (1+a)^{-\alpha}.
\end{equation}

Next, we recall the following Kummer's relation concerning how the hypergeometric function $F$ changes under linear transformations (see  formula (3) on p. 105 in \cite{E}, or also (9.5.1) on p. 247 in \cite{Le}),
\begin{equation}\label{hyperG}
F(\alpha,\beta;\gamma;u) = (1-u)^{-\alpha} F\left(\alpha,\gamma - \beta;\gamma;\frac{u}{u-1}\right),\ \ \ \ \ \ u\not=1, \ |\arg(1-u)|<\pi. 
\end{equation}
The following classical formula due to Gegenbauer can be found in (3) on p. 385 in \cite{Wa} (see also 6.621.1 on p. 711 in \cite{GR}) 
\begin{equation}\label{ohoh}
\int_0^\infty t^{\mu-1} e^{-\alpha t} J_\nu(\beta t) dt = \frac{2^{-\nu} \beta^\nu \G(\nu+\mu)}{\G(\nu+1) (\alpha^2 + \beta^2)^{\frac{\nu+\mu}2}} F\left(\frac{\nu+\mu}{2},\frac{1-\mu+\nu}{2};\nu+1;\frac{\beta^2}{\alpha^2 + \beta^2}\right), 
\end{equation}
provided that
\[
\Re(\nu+\mu)>0,\ \Re(\alpha+i\beta)>0,\ \Re(\alpha-i\beta)>0.
\]
Finally, we need the following formula due to H. Bateman (see formula (2) on p. 78 in \cite{E}, and also Problem 6. on p. 277 in \cite{Le})
\begin{equation}\label{hyperGint}
\int_0^1 y^{c-1} (1-y)^{\gamma-c-1} F(\alpha,\beta;c;a y) dy = \frac{\G(c)\G(\gamma - c)}{\G(\gamma)} F(\alpha,\beta;\gamma;a),
\end{equation}
provided that $\Re \gamma >\Re c>0$, $a\not=1$ and $|\arg(1-a)|<\pi$. The dividing line between Theorems \ref{T:fss} and \ref{T:poisson} will be precisely formula \eqref{hyperGint}. 
\subsection{Groups of Heisenberg type}\label{SS:Htype}

We recall that a Carnot group of step $r = 2$ is a simply-connected Lie group $\bG$ whose Lie algebra admits a stratification $\bg = V_1 \oplus V_2$, with $[V_1,V_1] = V_2$, $[V_1,V_2] = \{0\}$. We assume that $\bg$ is endowed with an inner product $\langle \cdot,\cdot\rangle$ and induced norm $|\cdot|$, and we let $m = \operatorname{dim}(V_1)$, $k = \operatorname{dim}(V_2)$.  We fix orthonormal basis $\{e_1,...,e_{m}\}$ and $\{\ve_1,...,\ve_k\}$ for $V_1$ and $V_2$ respectively, and for points $z\in V_1$ and $\sigma\in V_2$ we will use either one of the representations $z = \sum_{j=1}^{m} z_j e_j$, $\sigma = \sum_{\ell=1}^k \sigma_\ell \ve_\ell$, or also $z = (z_1,...,z_{m})$, $\sigma = (\sigma_1,...,\sigma_k)$. Accordingly, whenever convenient we will identify the point $g = \exp(z+\sigma)\in \bG$ with its logarithmic coordinates $(z,\sigma)$. The Kaplan mapping $J: V_2 \to \operatorname{End}(V_1)$ is defined by 
\begin{equation}\label{kap}
\langle J(\sigma)z,\zeta\rangle = \langle [z,\zeta],\sigma\rangle = - \langle J(\sigma)\zeta,z\rangle.
\end{equation}
Clearly, $J(\sigma)^\star = - J(\sigma)$, and one has $\langle J(\sigma)z,z\rangle = 0$. By \eqref{kap} and the Baker-Campbell-Hausdorff formula, see p. 12 of \cite{CG}, 
$$\exp(z+\sigma)  \exp(\zeta+\tau) = \exp{\left(z + \zeta + \sigma +\tau + \frac{1}{2}
[z,\zeta]\right)},$$
we obtain the non-Abelian multiplication in $\bG$
\begin{equation}\label{grouplaw}
g\circ g' = \big(z +\zeta,\sigma + \tau + \frac 12 \sum_{\ell=1}^{k} \langle J(\ve_\ell)z,\zeta\rangle\ve_\ell\big).
\end{equation}
The horizontal Laplacian associated with the basis $\{e_1,...,e_{m}\}$ of the layer $V_1$ is the second-order partial differential operator on $\bG$ defined by 
\begin{equation}\label{L}
\mathscr L f =  \sum_{j=1}^{m} X_j^2 f,
\end{equation}
where $X_1,...,X_{m}$ are the left-invariant vector fields in $\bG$ given by the Lie rule $$X_j u(g) = \frac{d}{ds} u(g \circ \exp s e_j)\big|_{s=0}.$$
The operator \eqref{L} fails to be elliptic at every point of $\bG$, but it is hypoelliptic thanks to the grading assumption on the Lie algebra and to H\"ormander's theorem in \cite{Ho}. The prototype par excellence of a Carnot group of step two is of course the Heisenberg group of real dimension $2n+1$, see e.g. \cite{Fo},  \cite{FScpam} and \cite{Ko}. More in general, a Carnot group of step two $\bG$ is said of Heisenberg type if for every $\la\in V_2$ one has
$$J(\la)^2 = - |\la|^2 I_m,$$
see \cite{Ka}. There is in nature a plentiful supply of such groups. For instance, the nilpotent component in the Iwasawa decomposition of a simple group of rank one is a group of Heisenberg type, see  \cite{KP}, \cite{Co} and \cite{CDKR}. All the results in this paper are valid for such class of Lie groups. 

It is well-known that, when $\bG$ is of Heisenberg type, then in the logarithmic coordinates $(z,\sigma)\in \bG$ one has
\begin{equation}\label{L2}
\mathscr L = \Delta_z + \frac{|z|^2}{4} \Delta_\sigma + \sum_{\ell = 1}^k \Theta_\ell \p_{\sigma_\ell},
\end{equation}
where $\Theta_\ell = \sum_{s=1}^m \langle J(\ve_\ell)z,e_s\rangle \p_{z_s}$, see e.g. \cite[Section 2.5]{Gparis}. When $U(z,\sigma) = u(|z|^2,\sigma)$, then for every $\ell =1,...,k$ we have
\begin{equation}\label{theta}
\Theta_\ell U = 2 \partial_1 u(|z|^2,\sigma) \sum_{s=1}^m \langle J(\ve_\ell)z,e_s\rangle z_s = 2 \partial_1 u(|z|^2,\sigma) \langle J(\ve_\ell)z,z\rangle = 0,
\end{equation}
and one has from \eqref{L2}
\begin{equation}\label{salah}
\mathscr L U = \Delta_z U + \frac{|z|^2}{4} \Delta_\sigma U.
\end{equation}


\subsection{The heat kernel}\label{SS:heat}

We close this section by recalling a formula that is at the core of the present work. In a group of Heisenberg type $\bG$ consider the heat operator $\p_t - \mathscr L$, where $\mathscr L$ is as in \eqref{L}. The fundamental solution of this operator with pole at the group identity is given by 
\begin{align}\label{pipposemprepiupiccoloHoo}
& p(z,\sigma,t) = \frac{2^k}{\left(4\pi t\right)^{\frac m2+k}}\int_{\R^k} e^{-\frac it \langle \sigma,\la\rangle}  \left(\frac{|\la|}{\sinh |\la|}\right)^{\frac m2} e^{- \frac{|z|^2}{4t} \frac{|\la|}{\tanh |\la|}}d\la,
\end{align}
where we have identified a point $g\in \bG$ with its logarithmic coordinates $(z,\sigma)\in \R^m\times\R^k$. By left-invariance, the function $p(g^{-1} \circ g',t)$ provides a fundamental solution of the heat equation with pole at any other $g'\in \bG$. We note in passing that, because of the complex structure induced by the Kaplan map $J$ in \eqref{kap}, in any group of Heisenberg type the dimension of the first layer must be even, i.e., $m=2n$ for some $n\in \mathbb N$, although we will never use this fact.
Formula \eqref{pipposemprepiupiccoloHoo} was found independently by Hulanicki \cite{Hu} and Gaveau \cite{Gav} in the Heisenberg group $\Hn$. Subsequently, their result was generalised by Cygan \cite{Cy} to all groups of step two, see also our recent work \cite{GTstep2} for a new approach to Cygan's result and for a detailed account of the literature. 

While in the analysis of $\mathscr L^s$ the only character is the heat kernel \eqref{pipposemprepiupiccoloHoo}, to study the nonlocal operator $\mathscr L_s$ one needs to carefully exploit the intertwining properties of the two operators $P_{(\pm s)}$. The next two sections are devoted to such task.


\section{Extension problems and intertwining heat kernels}\label{S:secdescr}

In this section we describe in detail the extension problems for $\mathscr L^s$ and $\mathscr L_s$. In  geometry the importance of extension procedures in the study of conformal invariants was highlighted in the celebrated works \cite{FG, GZ}. In our situation we will see that the relevant evolution pde in a higher-dimensional space leads to consider in a natural fashion the modified heat kernels introduced in \eqref{poisson}. In the classical setting we recall the (not so well-known) pioneering paper by F. Jones \cite{Jo} in which this author first solved the extension problem for the fractional heat equation $(\p_t - \Delta)^{1/2}$ in $\Rn$ by constructing an explicit Poisson kernel. We also mention the landmark work by Caffarelli and Silvestre \cite{CS}, in which they solved the extension problem for $(-\Delta)^s$ for arbitrary fractional powers $0<s<1$, and they established an alternative interpretation of this pseudo-differential operator as a weighted Dirichlet-to-Neumann map of a certain degenerate operator in one dimension up. Their paper has been, and continues to be, a rich source of development both in analysis and geometry.

\subsection{The model non-geometric extension problem for $\mathscr L^s$}

Similarly to what was done in  \cite{CS}, an alternative interpretation of the nonlocal operator \eqref{bala} is via the so-called extension problem:
given a function $f\in C^\infty_0(\bG)$, find $F\in C^\infty(\bG\times \R^+_y)$ such that
\begin{equation}\label{extLsopras}
\begin{cases}
\frac{\p^2 F}{\p y^2}  + \frac{1-2s}y \frac{\p F}{\p y} + \mathscr L F  = 0,
\\ 
F(g,0) = f(g).
\end{cases}
\end{equation}
We recall that in the framework of Carnot groups the problem \eqref{extLsopras} was studied by Ferrari and Franchi in \cite{FF}.
Since the present work is about the heat equation,  inspired by Bochner's ideas, instead of \eqref{extLsopras} we will consider the extension problem for the fractional powers $(\p_t - \mathscr L)^s$ and then use subordination to recover the solution of \eqref{extLsopras}. This nonlocal evolution operator is defined by a formula similar to \eqref{bala}, but in which the semigroup $P_t = e^{-t\mathscr L}$ is replaced by the evolutive heat semigroup defined by 
\[
P^{\mathscr H}_\tau u(g,t) = \int_{\bG} p(g,g',\tau) u(g',t-\tau)dg'.
\]
In this perspective the reader should see \cite{Gejde} for a treatment of the case when $\mathscr L$ is a general H\"ormander sub-Laplacian, and also \cite{GTiumj}, where we studied the extension problem for the fractional powers of evolution hypoelliptic operators of Fokker-Planck-Kolmogorov type. 
  Given a function $u\in C^\infty_0(\bG\times \R_t)$, the extension problem for  $(\p_t - \mathscr L)^s$ consists in finding $U\in C^\infty(\bG\times \R_t\times \R^+_y)$ such that
\begin{equation}\label{extHs}
\begin{cases}
\mathfrak P^{(s)} U = \frac{\p^2 U}{\p y^2}  + \frac{1-2s}y \frac{\p U}{\p y} + \mathscr L U - \frac{\p U}{\p t} = 0,
\\ 
U(g,t,0) = u(g,t).
\end{cases}
\end{equation}
We note that, by general principles, the function
\begin{equation}\label{qsopras}
q^{(s)}(g,g',t,y) \overset{def}{=} g^{(s)}(y,t)  p(g,g',t)
\end{equation}
is the fundamental solution (with pole at the point $(g',0,0)$ in the thick space $\bG\times \R_t\times \R_y$) of the extension operator $\mathfrak P^{(s)}$. Here, we have let $g^{(s)}(y,t) \overset{def}{=} (4\pi t)^{-(1-s)} e^{-\frac{y^2}{4t}}$ denote the heat kernel in the space with fractal dimension $\R_y^{2(1-s)}\times \R^+_t$. 
The Poisson kernel for the problem \eqref{extHs} is given by 
\begin{equation}\label{Ksopras}
\mathscr P^{(s)}(g,g',t,y) = \frac{4 \pi^{1+s}}{\G(1-s)} y^{2s} g^{(-s)}(y,t)  p(g,g',t),
\end{equation} 
where we have indicated with $g^{(-s)}(y,t) = (4\pi t)^{-(1+s)} e^{-\frac{y^2}{4t}}$ the heat kernel in the space with fractal dimension $\R_y^{2(1+s)}\times \R^+_t$. One should notice that the stochastic completeness of $P_t$, i.e. $P_t 1 = 1$, and a simple computation imply that for every $g\in \bG$ and $y>0$ one has
\begin{equation}\label{pescionone}
\int_0^\infty \int_{\bG} \mathscr P^{(s)}(g,g',t,y) dg' dt = 1.
\end{equation}
With such Poisson kernel in hands, the solution of the extension problem \eqref{extHs} is given by
\begin{align}\label{rem}
U^{(s)}(g,t;y) & = \int_0^\infty \int_{\bG} \mathscr P^{(s)}(g,g',\tau,y) u(g',t-\tau) dg' d\tau,
\end{align}
see \cite{Gejde}. The basic property of the function defined by \eqref{rem} is represented by the following Dirichlet-to-Neumann relation
\begin{equation}\label{DN}
- \frac{2^{2s-1} \G(1-s)}{\G(1+s)} \underset{y\to 0^+}{\lim} y^{1-2s} \frac{\p U^{(s)}}{\p y}(g,t;y) = (\p_t - \mathscr L)^s u(g,t).
\end{equation}
The limit in \eqref{DN} is not only pointwise, but it also holds  in $L^p(\bG\times \R)$ for any $1\le p \le \infty$.

Returning to the Poisson kernel \eqref{Ksopras}, we remark at this point that the function
\begin{equation}\label{sopramenos}
q^{(-s)}(g,g',t,y) \overset{def}{=} g^{(-s)}(y,t)  p(g,g',t),
\end{equation}
is the fundamental solution (with pole at the point $(g',0,0)$ in the thick space $\bG\times \R_t\times \R_y$) of the parabolic differential operator
\begin{equation}\label{Psopras}
\mathfrak P^{(-s)}  = \frac{\p^2}{\p y^2}  + \frac{1+2s}y \frac{\p}{\p y} + \mathscr L  - \frac{\p }{\p t}. 
\end{equation} 
The reader should note that, like the functions $q^{(s)}$ and $q^{(-s)}$, the operator \eqref{Psopras} is obtained by changing $s$ into $-s$ in the definition of $\mathfrak P^{(s)}$ in \eqref{extHs}. The link between the differential operators $\mathfrak P^{(s)}$ and $\mathfrak P^{(-s)}$ is given by the well-known fact, see for instance the 1965 work of Muckenhoupt and Stein \cite{MS}, that one has the following intertwining relation between the two Bessel equations
\[
\frac{\p^2 (y^{2s} u)}{\p y^2} + \frac{1-2s}{y} \frac{\p (y^{2s} u)}{\p y} = y^{2s} \left(\frac{\p^2 u}{\p y^2} + \frac{1+2s}{y} \frac{\p u}{\p y}\right).
\]
As a consequence, we have
\begin{equation}\label{PPsopras}
\mathfrak P^{(s)}\left(y^{2s} q^{(-s)}\right) = y^{2s} \mathfrak P^{(-s)}\ q^{(-s)} = 0,\ \ \ \ \text{and}\ \ \ \ \mathfrak P^{(-s)}\left(y^{-2s} q^{(s)}\right) = y^{-2s} \mathfrak P^{(s)}\ q^{(s)} = 0.
\end{equation}
We note that the former equation in \eqref{PPsopras} shows, in particular, that for every fixed $g'\in \bG$, the Poisson kernel \eqref{Ksopras} solves
\[
\mathfrak P^{(s)} \mathscr P^{(s)}(\cdot,g',\cdot,\cdot) = 0
\]
in the thick space $\bG\times \R^+_t \times \R^+_y$. Before proceeding, we note that from the expressions of $q^{(\pm s)}$ we have the following obvious, yet important fact,
\begin{equation}\label{masomenosopras}
(4\pi t)^{1-s} q^{(s)}(g,g',t,0) = (4\pi t)^{1+s} q^{(-s)}(g,g',t,0) = p(g,g',t),
\end{equation}
the heat kernel in $\bG\times \R$. We also note that, as a basic consequence of Bochner's principle of subordination, if we integrate in time the function in \eqref{Ksopras} we obtain the corresponding Poisson kernel for the extension problem for $\mathscr L^s$ \begin{equation}\label{Psenzatempo}
\mathscr Q^{(s)}(g,g',y) = \int_0^\infty \mathscr P^{(s)}(g,g',t,y) dt= \frac{4 \pi^{1+s}}{\G(s)} y^{2s} \int_0^\infty g^{(-s)}(y,t)  p(g,g',t) dt.
\end{equation} 
One should observe that \eqref{pescionone} implies that
for any $g\in \bG$ and $y>0$ one has
\begin{equation}\label{pescetto}
\int_{\bG} \mathscr Q^{(s)}(g,g',y) dg' = 1. 
\end{equation}
It was shown in \cite{Gejde} that, if one defines
\[
F^{(s)}(g,y) = \int_{\bG} \mathscr Q^{(s)}(g,g',y) f(g') dg' = \int_0^\infty \int_{\bG} \mathscr P^{(s)}(g,g',t,y)f(g') dg' dt,
\]
then this function solves the problem \eqref{extLsopras}, and moreover it satisfies the Dirichlet-to-Neumann condition
\begin{equation}\label{DNell}
- \frac{2^{2s-1} \G(1-s)}{\G(1+s)} \underset{y\to 0^+}{\lim} y^{1-2s} \frac{\p F^{(s)}}{\p y}(g,y) = (-\mathscr L)^s f(g).
\end{equation}


\subsection{$\mathscr L_s$ and the conformal extension problems} 


The discussion up to this point has been purely based on general properties of the heat semigroup $P_t = e^{-t\mathscr L}$ (also the Bessel semigroup plays an important role): no geometry has appeared so far. To see CR geometry one needs to consider the nonlocal operators $\mathscr L_s$. The meeting point of the two operators $\mathscr L^s$ and $\mathscr L_s$ is at $s=1$. It is in fact  worth noting that when $s=1$ in \eqref{bfm},\eqref{bfmH} one obtains $\mathscr L^1 =\mathscr L_1 = - \mathscr L$, whereas by a standard asymptotic analysis one can prove from \eqref{bala} that $\underset{s\nearrow 1}{\lim}\ \mathscr L^s = - \mathscr L$. For $0<s<1$, $\mathscr L^s$ and $\mathscr L_s$ are quite different, and they have rather different histories.

Intertwining operators of order $s$ in the CR-sphere appeared in the literature in the study of representations of automorphisms of the odd-dimensional sphere within the context of representation theory of semisimple Lie groups (we refer the reader to the treatments given in \cite{KS, JW, Co, BOO}). In \cite{Gra} Graham studied  geometrical and analytical properties of integer powers of operators possessing conformal CR-invariances in the CR-sphere, and (via Cayley transform) in the Heisenberg group. He also showed in \cite[Theorem 3.3]{Gra} that the appropriate power of the Folland's gauge function is the fundamental solution of such operators. A general treatment in CR manifolds of conformal integer powers of sub-Laplacians was developed in \cite{GG}. Conformal operators of fractional order were investigated by Branson, Fontana and Morpurgo in \cite{BFM} and definition of $\mathscr L_s$ \eqref{bfm} is due to them. We refer the reader to their work for a deeper insight into the conformal aspects of $\mathscr L_s$ and the role of these nonlocal operators in the study of optimal functional inequalities.

In their work \cite{FGMT} Frank, Gonzalez, Monticelli and Tan have used harmonic analysis and scattering theory to solve the so-called extension problem for the conformal fractional powers \eqref{bfm} of the horizontal Laplacian in $\Hn$: given $f\in C^\infty_0(\Hn)$, find a function $F\in C^\infty(\Hn\times (0,\infty))$ such that
\begin{equation}\label{Ext}
\begin{cases}
\frac{\p^2 F}{\p y^2} + \frac{1-2s}{y} \frac{\p F}{\p y} + \frac{y^2}4 \frac{\p^2 F}{\p \sigma^2} + \mathscr L F = 0,
\\
F((z,\sigma),0) = f(z,\sigma).
\end{cases}
\end{equation}
Comparing \eqref{Ext} with its non-conformal counterpart \eqref{extLsopras} one notes the additional term $\frac{y^2}4 \frac{\p^2 U}{\p \sigma^2}$. This term makes the analysis of \eqref{Ext} completely different from that of the problem \eqref{extHs} since now the extension semigroup is no longer driven by reflected Brownian motion on the  half-line $y>0$, i.e., the Bessel semigroup with infinitesimal generator $\frac{\p^2}{\p y^2} + \frac{1-2s}{y} \frac{\p}{\p y}$ with Neumann condition in $y=0$. Instead, the driving semigroup is now two-dimensional, and the relevant infinitesimal generator is the partial differential operator of Baouendi-Grushin type $\frac{\p^2 }{\p y^2} + \frac{1-2s}{y} \frac{\p }{\p y} + \frac{y^2}4 \frac{\p^2 }{\p \sigma^2}$. Unlike what happens for \eqref{extHs}, see \eqref{qsopras} above, this differential operator cannot be uncoupled from $\mathscr L$ since they both contain differentiation with respect to the variable $\sigma$. While these aspects will be further clarified below, here we mention that the additional basic property of the solution $F$ of \eqref{Ext} is the following weighted Dirichlet-to-Neumann relation proved in \cite[Theorem 1.1]{FGMT}
\[
\mathscr L_s f(z,\sigma) = c(s) \underset{y \to 0^+}{\lim} y^{1-2s} \frac{\p F}{\p y}((z,\sigma),y).
\] 
This remarkable result represents the conformal counterpart of \eqref{DNell} above.

Despite the essential differences between the extension problems for $\mathscr L^s$ and $\mathscr L_s$, our intent in the present work is to further exploit the analogies, by following an approach different from that in \cite{FGMT} and inspired by the works \cite{RTaim, RT}. With this objective in mind, similarly to \eqref{extHs} we now turn to considering, instead of \eqref{Ext}, the extension problem for the nonlocal evolution operator $(\p_t - \mathscr L)_s$.
In the opening of the section we have stressed an important aspect of the analysis of the non-conformal powers $\mathscr L^s$: the fact that both Balakrishnan's formula \eqref{bala} and the Riesz potential \eqref{rpotG0} are based on the same heat semigroup $P_t$. As we have said, this is due to the equalities in \eqref{masomenosopras}, which as we now show are no longer valid in the conformal case. In their cited works \cite{RTaim, RT} Roncal and Thangavelu considered the following parabolic extension problem, which represents the conformal counterpart of \eqref{extHs}:
given a function $u\in C^\infty_0(\bG\times \R_t)$ find a function $U\in C^\infty(\bG\times  \R_t \times \R^+_y)$ such that
\begin{equation}\label{parext}
\begin{cases}
\mathfrak P_{(s)} U \overset{def}{=} \frac{\p^2 U}{\p y^2}  + \frac{1-2s}y \frac{\p U}{\p y} + \frac{y^2}4 \Delta_\sigma U+ \mathscr L U - \frac{\p U}{\p t} = 0,\ \ \ \ \ \text{in}\ \bG\times  \R_t \times \R^+_y,
\\
U(z,\sigma,t,0) = u(z,\sigma,t).
\end{cases}
\end{equation}
Using \eqref{parext} these authors discovered a remarkable Poisson kernel for the problem \eqref{Ext} which represents the conformal counterpart of \eqref{Psenzatempo}. 

To explain their result from the perspective of partial differential equations, and at the same time motivate ours, we now make the crucial observation that, in a group of Heisenberg type $\bG$, the relevant heat equation associated with the problem \eqref{parext} is
\begin{equation}\label{pde}
\Delta_w U + \frac{|w|^2}4 \Delta_\sigma U + \mathscr L U - \p_t U= 0,
\end{equation}
where now $(z,\sigma)\in \bG$, $t>0$. Similarly to \eqref{extHs}, here we are thinking of the variable $w$ as running in the space with fractal dimension $\R^{2(1-s)}$. The link between \eqref{parext} and \eqref{pde} is readily seen by observing that, if $y = |w|$, then on a function $u(w) = \psi(y)$ we have $\Delta_w u = \p_{yy} \psi + \frac {1-2s}{y}\p_y \psi$. Since $\bG$ is of Heisenberg type, by \eqref{L2}  its horizontal Laplacian is given by 
\[
\mathscr L = \Delta_z + \frac{|z|^2}{4} \Delta_\sigma + \sum_{\ell = 1}^k \Theta_\ell \p_{\sigma_\ell},
\]
where $\Theta_\ell = \sum_{s=1}^m \langle J(\ve_\ell)z,e_s\rangle \p_{z_s}$ and $J:V_2 \to \operatorname{End}(V_1)$ is defined by \eqref{kap}.
If one looks for solutions $U$ of \eqref{pde} which are spherically symmetric in the variable $z\in \R^m$, then by \eqref{theta} we have $\Theta_\ell U = 0$ for every $\ell=1,...,k$, and in view of \eqref{salah} the pde \eqref{pde} becomes 
\begin{equation}\label{salahpar}
\Delta_w U + \Delta_z U + \frac{|w|^2 + |z|^2}4 \Delta_\sigma U - \p_t U= 0.
\end{equation}
Remarkably, this is a parabolic Baouendi-Grushin equation in $\R^{m+2(1-s)}\times \R^k\times (0,\infty)$ whose fundamental solution we can explicitly compute \footnote{We recall here that in his 1967 Ph.D. Dissertation \cite{Ba} Baouendi first studied the Dirichlet problem in $L^2$ for a class of degenerate elliptic operators that includes the following prototype
\begin{equation}\label{B}
\mathscr B = \Delta_w + \frac{|w|^2}4 \Delta_\sigma,
\end{equation}
where $(w,\sigma)\in \Rn\times \R^k$. Subsequently, Grushin studied in \cite{Gr1, Gr2} some questions of hypoellipticity connected with the model operator in \eqref{B}.}.  
We will need the following result, for whose proof we refer to \cite{GTstep2}.

\begin{proposition}\label{P:BGpar}
Given a function $f\in C^\infty_0(\R^{n+k})$, the solution of the Cauchy problem  
$$\begin{cases}
\p_t u - \Delta_w u - \frac{|w|^2}4 \Delta_\sigma u = 0\ \ \ \ \ \ \ \emph{in}\ \ \R^{n+k}\times (0,\infty),
\\
u((w,\sigma),0) = f(w,\sigma),
\end{cases}$$
is given by the formula
$$u((w,\sigma),t) = \int_{\Rn} \int_{\R^k} q((w,\sigma),(w',\sigma'),t) f(w',\sigma') dw' d\sigma',$$
where 
\begin{align}\label{parBfs2}
 q((w,\sigma),(w',\sigma'),t) & =  \frac{2^k}{(4\pi t)^{\frac{n}2 +k}} \int_{\R^k} e^{- \frac it \langle \la,\sigma'-\sigma\rangle}   \left(\frac{|\la|}{\sinh |\la|}\right)^{\frac n2} 
\\
& \times  e^{-\frac{|\la|}{4t \tanh |\la|} ((|w|^2 +|w'|^2) - 2 \langle w,w'\rangle \sech |\la|)} d\la.
\notag\end{align}
\end{proposition}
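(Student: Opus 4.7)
The natural strategy is to diagonalise the problem in the variable $\sigma$ via Fourier transform, because the coefficients of the operator $\Delta_w + \frac{|w|^2}{4}\Delta_\sigma$ depend on $w$ but not on $\sigma$, so the equation separates cleanly after Fourier. For fixed dual variable $\lambda \in \mathbb{R}^k$, this reduces the PDE to a scaled harmonic oscillator heat equation in the $w$-variable, for which Mehler's formula provides an explicit kernel. Inverting the Fourier transform and performing a dilation of the integration variable then produces the right-hand side of \eqref{parBfs2}.

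In detail, first I would take the partial Fourier transform in the vertical variable, writing $\hat{u}(w,\lambda,t)=\int_{\mathbb{R}^k} e^{-i\langle\sigma,\lambda\rangle}u(w,\sigma,t)\,d\sigma$. Since $\widehat{\Delta_\sigma u}=-|\lambda|^2\hat{u}$, the Cauchy problem becomes
\begin{equation*}
\partial_t\hat{u}+\Bigl(-\Delta_w+\tfrac{|\lambda|^2}{4}|w|^2\Bigr)\hat{u}=0,\qquad \hat{u}(w,\lambda,0)=\hat{f}(w,\lambda),
\end{equation*}
which for each $\lambda$ is the heat equation for the harmonic oscillator $H_\lambda=-\Delta_w+\frac{|\lambda|^2}{4}|w|^2$ on $\mathbb{R}^n$.

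Next I would invoke Mehler's formula: the heat kernel of $H_\lambda$ on $\mathbb{R}^n$ (for $\lambda\neq 0$) is
\begin{equation*}
K_t(w,w';\lambda)=\left(\frac{|\lambda|}{4\pi\sinh(|\lambda|t)}\right)^{n/2}\exp\left(-\frac{|\lambda|}{4\sinh(|\lambda|t)}\bigl((|w|^2+|w'|^2)\cosh(|\lambda|t)-2\langle w,w'\rangle\bigr)\right).
\end{equation*}
This is the one genuine analytic input; one verifies it either by the generating function for Hermite functions and the spectral decomposition of $H_\lambda$, or by diagonalising $H_\lambda$ as a tensor product of one-dimensional oscillators and using the classical one-dimensional Mehler identity. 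The kernel $q$ is then recovered by Fourier inversion,
\begin{equation*}
q((w,\sigma),(w',\sigma'),t)=\frac{1}{(2\pi)^k}\int_{\mathbb{R}^k}e^{i\langle\lambda,\sigma-\sigma'\rangle}K_t(w,w';\lambda)\,d\lambda.
\end{equation*}

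Finally, to bring the expression into the normalised form of \eqref{parBfs2}, I would make the change of variable $\lambda \mapsto \lambda/t$ in this integral. A direct bookkeeping turns $(2\pi)^{-k} t^{-k}(4\pi)^{-n/2}t^{-n/2}$ into $2^k(4\pi t)^{-n/2-k}$, converts $|\lambda|t$ into $|\lambda|$ inside the $\sinh$ and $\cosh$, and produces the factor $e^{-i\langle \lambda,\sigma'-\sigma\rangle/t}$. Using the identities $\frac{|\lambda|\cosh|\lambda|}{\sinh|\lambda|}=\frac{|\lambda|}{\tanh|\lambda|}$ and $\frac{|\lambda|}{\sinh|\lambda|}=\frac{|\lambda|}{\tanh|\lambda|}\operatorname{sech}|\lambda|$ to reorganise the quadratic form in the exponent gives exactly \eqref{parBfs2}. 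The only serious step is Mehler's formula itself; once that is granted, the remaining work is bookkeeping and a substitution, and the $\lambda=0$ contribution is handled by continuity since the integrand is uniformly bounded on compact sets.
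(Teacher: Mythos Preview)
Your proposal is correct. The paper itself does not give a proof of this proposition but refers to \cite{GTstep2}; the very title of that reference (``Mehler met Ornstein and Uhlenbeck\ldots'') confirms that the approach there is precisely the one you outline: partial Fourier transform in the central variable $\sigma$, reduction to the harmonic oscillator $-\Delta_w+\tfrac{|\lambda|^2}{4}|w|^2$, Mehler's kernel, and then inversion with the rescaling $\lambda\mapsto\lambda/t$. Your bookkeeping on the constants and the reorganisation of the exponent via $\frac{|\lambda|}{\sinh|\lambda|}=\frac{|\lambda|}{\tanh|\lambda|}\sech|\lambda|$ is accurate, so there is nothing to add.
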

If we take $n = m + 2(1-s)$ in the previous proposition and we keep in mind that $\frac n2 = \frac{m+2(1-s)}2 = \frac m2 + 1-s$, we notice that when the pole $(w',\sigma')$ in \eqref{parBfs2} is fixed at the origin, and we set $y = |w|$, then such fundamental solution is given by  
\begin{align}\label{parBfs}
 q_{(s)}((z,\sigma),t,y) & =  \frac{2^k}{(4\pi t)^{\frac{m}2 +k +1-s}} \int_{\R^k} e^{- \frac it \langle \sigma,\la\rangle}   \left(\frac{|\la|}{\sinh |\la|}\right)^{\frac m2+1-s} e^{-\frac{|z|^2 +y^2}{4t}\frac{|\la|}{\tanh |\la|}} d\la.
\end{align}
The function \eqref{parBfs} plays an important role in the remainder of this paper. It represents the conformal counterpart of the simpler looking fundamental solution $q^{(s)}((z,\sigma),t,y)$ in \eqref{qsopras}. The essential difference between the two is that whereas for $q^{(s)}$ the extension variable $y>0$ and the variable $g \in \bG$ are uncoupled, causing $q^{(s)}$ to be a product of the two fundamental solutions $g^{(s)}(y,t)$ and $p(g,t)$, for $q_{(s)}$ this is not the case. Since, as we have previously mentioned, the vertical variable $\sigma\in \R^k$ appears both in the heat operator $\p_t -\mathscr L$ and in the extension part of \eqref{extHs},
\begin{equation}\label{extpart}
\frac{\p^2}{\p y^2} + \frac{1-2s}{y} \frac{\p }{\p y} + \frac{y^2}4 \Delta_\sigma - \frac{\p }{\p t},
\end{equation}
the fundamental solution \eqref{parBfs} is not a product, but a partial convolution in the central variable. Indeed, it is not difficult to recognise that 
$$q_{(s)}((z,\sigma),y,t)  = p_{(s)}(y,\cdot,t) \star p(z,\cdot,t)(\sigma) = \int_{\R^k} p_{(s)}(y,\eta,t) p(z,\sigma - \eta,t) d\eta,$$
where $p(z,\sigma,t)$ is the Hulanicki-Gaveau-Cygan heat kernel defined by \eqref{pipposemprepiupiccoloHoo}, and we have indicated with $p_{(s)}(y,\sigma,t)$ the fundamental solution (with pole in the origin) of \eqref{extpart}. In conclusion, the function $q_{(s)}$ is defined in the thick space $\bG \times \R^+_{t}\times \R^+_y$, and solves the  parabolic extension differential operator $\mathfrak P_{(s)}$ in \eqref{extHs}.

In analogy with \eqref{sopramenos} its companion function $q_{(-s)}$, which is obtained from \eqref{parBfs} by changing $s$ into $-s$, is the fundamental solution of the intertwined operator
$$\mathfrak P_{(-s)}  \overset{def}{=} \frac{\p^2 }{\p y^2}  + \frac{1+2s}y \frac{\p }{\p y} + \frac{y^2}4 \Delta_\sigma + \mathscr L  - \frac{\p }{\p t}.$$
Precisely as for \eqref{PPsopras}, the link between these two operators is given by 
\begin{equation}\label{intrallazzamento}
\mathfrak P_{(s)}\left(y^{2s} q_{(-s)}\right) = y^{2s} \mathfrak P_{(-s)}\ q_{(-s)} = 0,\ \ \ \ \text{and}\ \ \ \ \mathfrak P_{(-s)}\left(y^{-2s} q_{(s)}\right) = y^{-2s} \mathfrak P_{(s)}\ q_{(s)} = 0.
\end{equation}

Returning to the extension problem \eqref{parext}, in analogy with \eqref{Ksopras} it is natural to expect the function
\begin{equation}\label{Ksottos}
\mathscr P_{(s)}((z,\sigma),t,y) = \frac{4 \pi^{1+s}}{\G(s)} y^{2s} q_{(-s)}((z,\sigma),t,y)
\end{equation}
to be the relevant Poisson kernel. We note that, in view of \eqref{intrallazzamento}, we know that 
\[
\mathfrak P_{(s)} \mathscr P_{(s)} = 0.
\]
With \eqref{Ksottos} in hands, as in \eqref{Psenzatempo} one should accordingly expect 
\begin{equation}\label{Qsottos}
\mathscr Q_{(s)}((z,\sigma),y) = \int_0^\infty \mathscr P_{(s)}((z,\sigma),t,y) dt = \frac{4 \pi^{1+s}}{\G(s)} y^{2s} \int_0^\infty q_{(-s)}((z,\sigma),t,y) dt
\end{equation} 
to be the Poisson kernel for the extension problem for $\mathscr L_s$. As a consequence of our Theorem \ref{T:thick} above, we can explicitly compute \eqref{Qsottos}, obtaining
\begin{equation}\label{Qsottosalt}
\mathscr Q_{(s)}((z,\sigma),y) = \frac{2^{-2s}|\G(-s)|}{\G(s)} C_{(-s)}(m,k) \frac{y^{2s}}{((|z|^2+y^2)^2+16|\sigma|^2)^{\frac{Q+2 s}4}}.
\end{equation}
Using the explicit knowledge of the constant in \eqref{C}, we obtain the following.
\begin{proposition}\label{P:Q}
Let $0<s\le 1$. For every $y>0$ one has
\[
\int_0^\infty \int_{\bG} \mathscr P_{(s)}((z,\sigma),ty) dz d\sigma dt = \int_{\bG} \mathscr Q_{(s)}((z,\sigma),y) dzd\sigma = 1.
\]
\end{proposition}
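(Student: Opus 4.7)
The first equality in the statement is immediate from the definition \eqref{Qsottos} of $\mathscr Q_{(s)}$, since the integrand is non-negative and Fubini applies. The substance of the proposition is therefore the normalisation identity
\[
\int_{\bG}\mathscr Q_{(s)}((z,\sigma),y)\,dz\,d\sigma = 1\quad \text{for every}\ y>0,
\]
and the plan is to verify this by direct computation starting from the explicit expression \eqref{Qsottosalt}.

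The first step is to exploit homogeneity to factor out $y$. Under the anisotropic dilation $(z,\sigma) = (y z',\,y^{2}\sigma')$, the Jacobian is $y^{m+2k}=y^{Q}$, and the denominator transforms as
\[
(|z|^{2}+y^{2})^{2}+16|\sigma|^{2} = y^{4}\bigl((|z'|^{2}+1)^{2}+16|\sigma'|^{2}\bigr).
\]
Consequently $y^{2s}\cdot y^{Q}/y^{Q+2s}=1$, and $\int_{\bG}\mathscr Q_{(s)}\,dz\,d\sigma$ is independent of $y$ and equal to the constant prefactor of \eqref{Qsottosalt} times
\[
I \overset{def}{=} \int_{\R^{m}\times\R^{k}}\frac{dz\,d\sigma}{\bigl((|z|^{2}+1)^{2}+16|\sigma|^{2}\bigr)^{\frac{Q+2s}{4}}}.
\]

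The second step is to compute $I$ explicitly. Passing to polar coordinates in $z$ and $\sigma$ and substituting $u=|z|^{2}$, $\tau=16|\sigma|^{2}$ one obtains
\[
I = \frac{\pi^{m/2}}{\G(m/2)}\cdot\frac{\pi^{k/2}}{4^{k}\,\G(k/2)}\int_{0}^{\infty}\!\!\int_{0}^{\infty}\frac{u^{m/2-1}\tau^{k/2-1}}{((u+1)^{2}+\tau)^{(Q+2s)/4}}\,du\,d\tau.
\]
For the inner $\tau$-integral, the substitution $\tau=(u+1)^{2}w$ and the beta identity $\int_{0}^{\infty}w^{a-1}(1+w)^{-a-b}\,dw=B(a,b)$ with $a=k/2$ and $b=(m+2s)/4$ produce a factor
$
(u+1)^{-(m+2s)/2}\cdot\G(k/2)\G((m+2s)/4)/\G((Q+2s)/4).
$
The remaining $u$-integral is a second beta integral that yields
$
\G(m/2)\G(s)/\G(m/2+s).
$
Collecting the factors, $I$ equals
\[
\frac{\pi^{(m+k)/2}}{4^{k}}\cdot\frac{\G\bigl(\tfrac{m+2s}{4}\bigr)\,\G(s)}{\G\bigl(\tfrac{Q+2s}{4}\bigr)\,\G\bigl(\tfrac{m+2s}{2}\bigr)}.
\]

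The final step is bookkeeping: insert this value of $I$ into the coefficient of \eqref{Qsottosalt}, recalling that $C_{(-s)}(m,k)$ is obtained from \eqref{C} by the substitution $s\mapsto -s$ (with $\G(-s)$ replaced by $|\G(-s)|$ per the convention of Theorem \ref{T:thick}). The factors $|\G(-s)|$, $\G(s)$, $\pi^{(m+k+1)/2}$ and the power $4^{k}=2^{2k}$ cancel cleanly, leaving
\[
\frac{2^{m/2+s-1}\,\G\bigl(\tfrac{1}{2}(\tfrac{m}{2}+1+s)\bigr)\,\G\bigl(\tfrac{1}{2}(\tfrac{m}{2}+k+s)\bigr)\,\G\bigl(\tfrac{m+2s}{4}\bigr)}{\sqrt{\pi}\,\G\bigl(\tfrac{Q+2s}{4}\bigr)\,\G\bigl(\tfrac{m+2s}{2}\bigr)}.
\]
The ratio of gamma functions involving $\tfrac{m+2s}{4}$, $\tfrac{m+2s}{4}+\tfrac12$ and $\tfrac{m+2s}{2}$ is handled by Legendre's duplication formula \eqref{prod}, which collapses the numerator gammas against $\G((m+2s)/2)$ and the power of $2$ and $\sqrt\pi$. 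Finally, the identity $\tfrac{1}{2}(\tfrac{m}{2}+k+s)=\tfrac{Q+2s}{4}$ produces an exact cancellation with $\G((Q+2s)/4)$, giving $1$.

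The only genuinely delicate point is this last accounting; every arithmetic simplification is governed by the duplication formula and by the coincidence $\tfrac{1}{2}(\tfrac{m}{2}+k+s)=\tfrac{Q+2s}{4}$, and it is essentially forced by the scaling argument in step one together with the fact that $\mathscr Q_{(s)}$ was constructed from a fundamental solution of $\mathfrak L_{(s)}$.
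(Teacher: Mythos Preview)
Your argument is correct and is exactly the computation the paper has in mind when it says the proposition follows ``using the explicit knowledge of the constant in \eqref{C}'': you take the closed form \eqref{Qsottosalt}, reduce to $y=1$ by the anisotropic dilation, evaluate the resulting integral via two beta integrals, and then collapse the gamma factors with the duplication formula \eqref{prod} together with $\tfrac12(\tfrac m2+k+s)=\tfrac{Q+2s}{4}$. The only remark is that your appeal to Tonelli for the first equality tacitly uses non-negativity of $\mathscr P_{(s)}$; this is true (it is a rescaled heat kernel of a degenerate parabolic operator), but it is not immediately visible from the oscillatory integral \eqref{parBfs}, so a one-line justification would tighten the argument.
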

One should compare Proposition \ref{P:Q} with its non-conformal counterpart in \eqref{pescionone} and \eqref{pescetto}. 
We mention that the right-hand side of formula \eqref{Qsottosalt} is exactly the Poisson kernel found by Roncal and Thangavelu in (1.7) of their \cite[Proposition 4.2]{RTaim} and \cite[Theorems 1.2 \& 3.2]{RT}. With such Poisson kernel they are able to establish, among other things, 
a conformal counterpart of the well-known formula of M. Riesz
\[
(-\Delta)^s u(x) = \gamma(n,s) \operatorname{PV} \int_{\Rn} \frac{u(x) - u(y)}{|x-y|^{n+2s}} dy.
\]

If we now return to the fundamental solution \eqref{parBfs} of the extension operator $\mathfrak P^{(s)}$ in \eqref{parext} and its companion $q_{(-s)}$, with such functions in hands we now define
\begin{equation}\label{masomenosottos}
\mathscr K_{(s)}((z,\sigma),t) = (4\pi t)^{1-s} q_{(s)}((z,\sigma),t,0),\ \ \  \mathscr K_{(-s)}((z,\sigma),t)= (4\pi t)^{1+s} q_{(-s)}((z,\sigma),t,0).
\end{equation}
One should note the difference between \eqref{masomenosottos} and their non-conformal counterparts in \eqref{masomenosopras}.
From \eqref{parBfs} it is immediate to recognise that
\begin{equation}\label{poissono}
\mathscr K_{(s)}((z,\sigma),t) = \frac{2^k}{\left(4\pi t\right)^{\frac m2+k}}\int_{\R^k} e^{-\frac it \langle \sigma,\la\rangle} \left(\frac{|\la|}{\sinh |\la|}\right)^{\frac m2+1-s}  e^{- \frac{|z|^2}{4t} \frac{|\la|}{\tanh |\la|}}d\la,\ \ \ \ 0<s\le 1.
\end{equation}
This finally explains the function \eqref{poisson} in the introduction. 
Note that $\mathscr K_{(s)}$  coincides with the heat kernel $p((z,\sigma),t)$ in \eqref{pipposemprepiupiccoloHoo} in the local case $s = 1$.
 It should be obvious that $\mathscr K_{(-s)}((z,\sigma),t)$ admits a similar integral representation if one changes $s$ into $-s$ in \eqref{poissono}. 
 
 With these definitions in place, we now see that the Roncal-Thangavelu kernel $\mathcal K^s_t(z,\sigma)$ in \cite[Formula (2.18)]{RTaim} is precisely $\mathscr K_{(-s)}((z,\sigma),t)$. By slightly abusing the notation, we now let
\[
\mathscr K_{(\pm s)}((z,\sigma),(\zeta,\tau),t) \overset{def}{=} \mathscr K_{(\pm s)}((\zeta,\tau)^{-1}\circ (z,\sigma),t),
\]
and define two modified heat flows in $\bG\times \R^+_t$ by letting
$$P_{(\pm s),t} u(z,\sigma) = u \star \mathscr K_{(\pm s)}(\cdot,t)(z,\sigma) = \int_{\bG} \mathscr K_{(\pm s)}((z,\sigma),(\zeta,\tau),t) u(\zeta,\tau) d\zeta d\tau.$$
We recall that if $u, v$ are two functions in $\bG$, then their group convolution is defined by
\[
u\star v(g) = \int_{\bG} u(g\circ (g')^{-1}) v(g') dg' = \int_{\bG} v((g')^{-1}\circ g) u(g') dg'.
\]



\section{The main inversion theorem for $\mathscr L_s$}\label{S:inv}

This section is primarily devoted to proving Theorem \ref{T:I2sdue0}. To motivate our approach to such result, we begin with establishing a preliminary fact about the non-conformal operators $\mathscr L^s$ and $\mathscr I^{(2s)}$ introduced respectively in \eqref{bala} and \eqref{rpotG0}. We mention that, although we have not explicitly defined these geometric ambients, the next proposition holds in a Carnot group of arbitrary step.  

\begin{proposition}\label{P:Isuno}
For every $0<s<1$ one has
\[
\mathscr I^{(2s)} \circ \mathscr L^s = \mathscr L^s \circ \mathscr I^{(2s)} = I.
\] 
In particular, this says that the fundamental solution of $\mathscr L^s$ with pole at the group identity is given by the kernel $\mathscr E^{(s)}(g)$  in \eqref{EsH0}.
\end{proposition}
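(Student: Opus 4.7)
The plan is to reduce both identities to Euler's Beta integral $B(s,1-s)=\G(s)\G(1-s)$ by rewriting Balakrishnan's formula \eqref{bala} in a divergence form based on the heat flow.

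For $u\in C^\infty_0(\bG)$ the orbit $\tau\mapsto P_\tau u$ is smooth with $\partial_\tau P_\tau u=-\mathscr L P_\tau u$, and the Gaussian upper bound for the heat kernel on a Carnot group gives $\|P_\tau u\|_\infty\to 0$ as $\tau\to\infty$. An integration by parts in \eqref{bala} -- whose boundary terms vanish because $\tau^{-s}(P_\tau u-u)=O(\tau^{1-s})$ at $\tau=0$ and tends to $0$ at infinity -- would yield the Bochner-type reformulation
$$
\mathscr L^s u=-\frac{1}{\G(1-s)}\int_0^\infty \tau^{-s}\,\partial_\tau P_\tau u\,d\tau.
$$
Combined with the semigroup identity $P_tP_\tau=P_{t+\tau}$, the same formula also yields the commutation $P_t\mathscr L^s=\mathscr L^s P_t$ on $C^\infty_0(\bG)$.

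Using these two facts I would then compute
$$
\mathscr I^{(2s)}\mathscr L^s u
=\frac{1}{\G(s)}\int_0^\infty t^{s-1}\,\mathscr L^s P_t u\,dt
=-\frac{1}{\G(s)\G(1-s)}\int_0^\infty\!\!\int_0^\infty t^{s-1}\tau^{-s}\,\partial_\tau P_{t+\tau}u\,d\tau\,dt.
$$
The double integral is absolutely convergent (near zero one uses $|\partial_\tau P_{t+\tau}u|\le\|\mathscr L u\|_\infty$, and at infinity the heat-kernel bounds give polynomial decay of $\mathscr L P_{t+\tau}u$), so Fubini applies. Substituting $r=t+\tau$ at fixed $t$ transforms the expression into
$$
\mathscr I^{(2s)}\mathscr L^s u=-\frac{1}{\G(s)\G(1-s)}\int_0^\infty \partial_r P_r u\,\Bigl(\int_0^r t^{s-1}(r-t)^{-s}\,dt\Bigr)\,dr,
$$
and the inner integral evaluates, via $t=r\sigma$, to $B(s,1-s)=\G(s)\G(1-s)$ independently of $r$. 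The remaining integral telescopes: $\int_0^\infty\partial_r P_r u\,dr=P_\infty u-P_0 u=-u$, whence $\mathscr I^{(2s)}\mathscr L^s u=u$.

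The companion identity $\mathscr L^s\mathscr I^{(2s)}u=u$ follows either by the symmetric manipulation -- insert the Bochner formula for $\mathscr L^s$ on the outside and bring $\partial_\tau P_\tau$ inside the Riesz integral \eqref{rpotG0}, again reducing to the same Beta integral -- or, more compactly, from the commutativity between $\mathscr L^s$ and $P_t$ (and hence with $\mathscr I^{(2s)}$). The principal obstacle throughout is the Fubini justification; everything else is Euler's Beta identity. Finally, since by \eqref{EsH0} and the left-invariance of the heat kernel one has $\mathscr I^{(2s)}u(g)=\int_\bG\mathscr E^{(s)}((g')^{-1}\circ g)u(g')\,dg'$, the inversion identity reads $\mathscr L^s\mathscr E^{(s)}=\delta_e$ in the distributional sense, which is the claimed fundamental-solution statement.
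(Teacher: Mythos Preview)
Your proposal is correct and follows essentially the same approach as the paper: both rewrite Balakrishnan's formula as $\mathscr L^s u=-\frac{1}{\G(1-s)}\int_0^\infty \tau^{-s}\partial_\tau P_\tau u\,d\tau$, insert into the Riesz integral, exploit the semigroup law $P_tP_\tau=P_{t+\tau}$, and reduce to Euler's Beta identity combined with the telescoping $\int_0^\infty\partial_r P_r u\,dr=-u$. The only cosmetic difference is the choice of substitution—your $r=t+\tau$ produces $\int_0^r t^{s-1}(r-t)^{-s}dt=B(s,1-s)$, whereas the paper uses $t=\tau\rho$ followed by $\sigma=(1+\rho)\tau$ to arrive at the equivalent form $\int_0^\infty \frac{\rho^{s-1}}{1+\rho}\,d\rho=B(s,1-s)$.
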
   

\begin{proof}
We first observe that, for $u\in C^\infty_0(\bG)$ and $g \in \bG$, we can rewrite \eqref{bala} in the following way 
\begin{align*}
& \mathscr L^s u(g) = - \frac{s}{\G(1-s)} \int_0^\infty \frac{1}{t^{1+s}} (P_t u(g) - u(g)) dt
\\
& = - \frac{s}{\G(1-s)} \int_0^\infty \frac{1}{t^{1+s}} \int_0^t \frac{d}{d\tau} P_\tau u(g) d\tau dt = - \frac{s}{\G(1-s)} \int_0^\infty \frac{1}{t^{1+s}} \int_0^t \mathscr L P_\tau u(g) d\tau dt
\\
& = - \frac{s}{\G(1-s)} \int_0^\infty \mathscr L P_\tau u(g) \int_\tau^\infty  \frac{dt}{t^{1+s}} d\tau = - \frac{1}{\G(1-s)} \int_0^\infty \frac{1}{\tau^{s}} \mathscr L P_\tau u(g)   d\tau. 
\end{align*}
With this formula in hands, we obtain
\begin{align*}
& \mathscr I^{(2s)} (\mathscr L^s u)(g) = - \frac{1}{\G(s)\G(1-s)} \int_0^\infty t^{s} \int_0^\infty \frac{1}{\tau^{s}} \mathscr L P_{t+\tau} u(g)  d\tau \frac{dt}t
\\
& (\text{change of variable}\ t = \tau \rho,\ \frac{dt}t = \frac{d\rho}\rho)
\\
&  = - \frac{1}{\G(s)\G(1-s)} \int_0^\infty \rho^{s-1} \int_0^\infty  \mathscr L P_{(1+\rho)\tau} u(g)  d\tau d\rho
\\
& (\text{change of variable}\ \sigma = (1+\rho)\tau,\ d\sigma = (1+\rho) d\tau)
\\
& = - \frac{1}{\G(s)\G(1-s)} \int_0^\infty \frac{\rho^{s-1}}{1+\rho} d\rho \int_0^\infty  \mathscr L P_{\sigma} u(g)  d\sigma
\\
& = - \int_0^\infty  \frac{d}{d\sigma} P_{\sigma} u(g)  d\sigma = u(g),
\end{align*}
where in the second to the last equality we have used the following well-known representation of the beta function
\[
B(x,y) = \int_0^\infty \frac{\rho^{x-1}}{(1+\rho)^{x+y}} d\rho,
\]
which gives for $0<s<1$
\[
 \int_0^\infty \frac{\rho^{s-1}}{1+\rho} d\rho = B(s,1-s) = \G(s)\G(1-s).
 \]
In the last equality, instead, we have used the hypercontractive estimate (here, $Q = m+2k$)
\[
|P_t u(g)| \le \frac{C}{t^{Q/2}} ||u||_{L^\infty(\bG)},
\] 
which implies $\underset{t\to \infty}{\lim} P_t u(g) = 0$.

\end{proof}

Our next objective is to establish the conformal analogue of the inversion formula of Proposition \ref{P:Isuno}. There are two aspects that play a big role in the proof of the latter: (a) the already noted fact that the same operator, $P_t$, occurs in both the expression for $\mathscr I^{(2s)}$ and $\mathscr L^s$; (b) the semigroup property of $P_t$. Things are not as simple in the conformal case. Since in the definition \eqref{conformalriesz} of $\mathscr I_{(2s)}$ the operator $P_{(s), t}$ appears, whereas in that \eqref{RTs} of $\mathscr L_s$ we have the different operator $P_{(-s), t}$, we need some form of replacement of the semigroup property for the composition of such intertwining operators and their defining kernels $\mathscr K_{(\pm s)}$ in \eqref{poissono}. This is precisely the purpose of the following Lemma \ref{kernallazzamento} and Corollary \ref{heatrallazzamento}.

{\allowdisplaybreaks
\begin{lemma}\label{kernallazzamento}
Fix $s\in (0,1)$, $g\in\bG$ and $t,\tau>0$. Then, we have
\begin{align}\label{eccola}
\int_{\bG}\mathscr K_{(-s)}((g')^{-1}\circ g,\tau)\mathscr K_{(s)}(g',t)\, dg'=&\int_{\R^k} e^{2\pi i \langle \sigma,\la\rangle}\left(\frac{2\pi t|\la|}{\sinh 2\pi t |\la|}\right)^{1-s}\left(\frac{2\pi \tau|\la|}{\sinh 2\pi\tau |\la|}\right)^{1+s} \times\\
&\times \left(\frac{|\la|}{2\sinh 2\pi (t+\tau) |\la|}\right)^{\frac m2} e^{-\frac \pi2 |z|^2 \frac{|\la|}{\tanh 2\pi (t+\tau) |\la|}}d\la .\nonumber
\end{align}
\end{lemma}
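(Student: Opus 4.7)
The plan is a direct Fourier-theoretic computation. First, substituting $\la\mapsto 2\pi t\la$ in \eqref{poisson} I would rewrite both kernels in the equivalent normalised form
\[
\mathscr K_{(\pm s)}((z,\sigma),t) = \frac{1}{(4\pi t)^{m/2}}\int_{\R^k} e^{-2\pi i\langle\sigma,\la\rangle} \left(\frac{2\pi t|\la|}{\sinh 2\pi t|\la|}\right)^{\frac m2+1\mp s} e^{-\frac{\pi|z|^2|\la|}{2\tanh(2\pi t|\la|)}} d\la,
\]
so that the Fourier variable already matches the one appearing on the right-hand side of \eqref{eccola}. Next, writing $g=(z,\sigma)$ and $g'=(\zeta,\sigma')$ and using \eqref{grouplaw} to compute $(g')^{-1}\circ g=(z-\zeta,\sigma-\sigma'-\tfrac12\Psi(\zeta,z))$, where $\Psi(\zeta,z)_\ell=\langle J(\ve_\ell)\zeta,z\rangle$, I would expand the left-hand side of \eqref{eccola} as an iterated integral in $\zeta$, $\sigma'$ and two oscillation variables $\mu,\la\in\R^k$. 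Performing the $\sigma'$-integration first produces a Fourier delta $\delta(\mu-\la)$ that collapses the two Fourier variables and reduces the expression to a single $\la$-integral with an inner Gaussian in $\zeta$.

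The heart of the computation is that $\zeta$-Gaussian, whose exponent is the quadratic form
\[
-\frac{\pi}{2}(A+B)|\zeta|^2 + \pi\langle Az-iJ(\la)z,\zeta\rangle - \frac{\pi A}{2}|z|^2,
\]
with $A=|\la|/\tanh(2\pi\tau|\la|)$ and $B=|\la|/\tanh(2\pi t|\la|)$; the imaginary linear term originates from the central contribution $\tfrac12\Psi(\zeta,z)$ in the H-type group law. Completing the square (the complex translation being justified by a standard contour-shift for Gaussian integrands) and then invoking the H-type identity $J(\la)^T J(\la)=|\la|^2 I_m$, which yields
\[
(Az-iJ(\la)z)^T(Az-iJ(\la)z) = (A^2-|\la|^2)|z|^2 = \frac{|\la|^2|z|^2}{\sinh^2(2\pi\tau|\la|)},
\]
produces an effective $|z|^2$-coefficient equal to $\frac{\pi}{2}\cdot\frac{|\la|^2+AB}{A+B}|z|^2$. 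The classical hyperbolic identity $\coth(x+y)=(1+\coth x\coth y)/(\coth x+\coth y)$ then identifies this coefficient with $\frac{\pi|\la|}{2\tanh(2\pi(t+\tau)|\la|)}$, exactly matching the Gaussian exponent on the right of \eqref{eccola}.

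What remains is bookkeeping of prefactors. The $\zeta$-Gaussian normalisation contributes $(2/(A+B))^{m/2}$; rewriting $A+B = |\la|\sinh(2\pi(t+\tau)|\la|)/(\sinh(2\pi\tau|\la|)\sinh(2\pi t|\la|))$ and combining with the $m/2$-portions of the two Mehler-type factors from the kernels, all the $\tau,t$-dependent constants from $(4\pi\tau t)^{-m/2}$ cancel and one is left with exactly $(|\la|/(2\sinh 2\pi(t+\tau)|\la|))^{m/2}$. The surviving $1\mp s$-parts of the Mehler factors give the two hyperbolic ratios with exponents $1\pm s$ appearing in \eqref{eccola}, and a final change of variable $\la\to-\la$ flips the Fourier phase to the stated $e^{2\pi i\langle\sigma,\la\rangle}$.

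The main obstacle I anticipate is less computational than conceptual: one must pinpoint where the H-type hypothesis enters. Without $J(\la)^T J(\la)=|\la|^2 I_m$, the ``formal squared norm'' $(Az-iJ(\la)z)^T(Az-iJ(\la)z)$ would depend on the full spectrum of $J(\la)^T J(\la)$, introducing a nontrivial $\la$-dependent determinantal factor and destroying the clean product structure of \eqref{eccola}. The Gaussian contour shift itself is routine, and the subsequent hyperbolic algebra is standard.
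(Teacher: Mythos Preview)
Your proposal is correct and follows essentially the same route as the paper: a partial Fourier transform in the central variable collapses the two oscillatory integrals to a single $\la$, after which one completes the square in the horizontal Gaussian, invokes the H-type identities $\langle J(\la)z,z\rangle=0$ and $|J(\la)z|=|\la||z|$, and finishes with the same hyperbolic addition formulas. The only cosmetic differences are that the paper performs the real shift first and then applies the Gaussian Fourier formula (rather than your single complex contour shift), and it organises the Fourier collapse by transforming in the outer variable $\sigma$ rather than integrating the inner variable $\sigma'$ to produce a delta.
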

}

{\allowdisplaybreaks
\begin{proof}
If we keep  \eqref{masomenosottos} in mind, it is clear that \eqref{eccola} is equivalent to showing that
\begin{align}\label{eccolaQ}
\int_{\bG} q_{(-s)}((g')^{-1}&\circ g,\tau,0) q_{(s)}(g',t,0)\, dg'=\int_{\R^k} e^{2\pi i \langle \sigma,\la\rangle}\left(\frac{|\la|}{2\sinh 2\pi t |\la|}\right)^{1-s}\times\\
&\times\left(\frac{|\la|}{2\sinh 2\pi\tau |\la|}\right)^{1+s}\left(\frac{|\la|}{2\sinh 2\pi (t+\tau) |\la|}\right)^{\frac m2} e^{-\frac \pi2 |z|^2 \frac{|\la|}{\tanh 2\pi (t+\tau) |\la|}}d\la .\nonumber
\end{align}
To establish \eqref{eccolaQ}, we first fix $t, \tau>0$ and $z\in\R^m$. By partial Fourier transform with respect to the vertical variable $\sigma\in \R^k$, we observe that the desired conclusion \eqref{eccolaQ} will be true if we can prove that for all $\lambda\in\R^k$ the following holds
\begin{align}\label{claimFourier}
&\int_{\R^k} e^{-2\pi i \langle \sigma,\la\rangle}\int_{\bG} q_{(-s)}((g')^{-1}\circ g,\tau,0) q_{(s)}(g',t,0)\, dg'd\sigma\\
&=\left(\frac{|\la|}{2\sinh 2\pi t |\la|}\right)^{1-s}\left(\frac{|\la|}{2\sinh 2\pi\tau |\la|}\right)^{1+s}\left(\frac{|\la|}{2\sinh 2\pi (t+\tau) |\la|}\right)^{\frac m2} e^{-\frac \pi2 |z|^2 \frac{|\la|}{\tanh 2\pi (t+\tau) |\la|}}.
\nonumber
\end{align}
The rest of the proof of the lemma will thus be devoted to establishing \eqref{claimFourier}. Recalling the definition of $q_{(\pm s)}$ in \eqref{parBfs}, by a simple change of variable we see that
\begin{equation}\label{defaltq}
q_{(\pm s)}(g',t,0)  = \int_{\R^k} e^{2\pi i \langle \sigma',\la\rangle} \left(\frac{|\la|}{2\sinh 2\pi t |\la|}\right)^{\frac m2+1\mp s}  e^{-\frac \pi2 |z'|^2 \frac{|\la|}{\tanh 2\pi t |\la|}}d\la.
\end{equation}
Using \eqref{grouplaw}, we also have
\begin{align}\label{convqs}
&q_{(\pm s)}((g')^{-1}\circ g,\tau,0) \\
&= \int_{\R^k} e^{2\pi i \left(\langle \sigma-\sigma',\la\rangle + \frac 12\left\langle J(\lambda)z,z' \right\rangle\right)} \left(\frac{|\la|}{2\sinh 2\pi \tau |\la|}\right)^{\frac m2+1\mp s}  e^{-\frac \pi2 |z-z'|^2 \frac{|\la|}{\tanh 2\pi \tau |\la|}}d\la.\nonumber
\end{align}
Next, we note that by \eqref{defaltq}-\eqref{convqs}, and applying twice the Fourier inversion formula, for every fixed $\la\in \R^k$ we can rewrite the left-hand side of \eqref{claimFourier} in the following way
\begin{align}\label{lhs}
&\int_{\R^k} e^{-2\pi i \langle \sigma,\la\rangle}\int_{\bG} q_{(-s)}((g')^{-1}\circ g,\tau,0) q_{(s)}(g',t,0)\, dg'd\sigma\\
&=\int_{\R^k}\int_{\R^k}\int_{\R^k}\int_{\bG} e^{-2\pi i \langle \sigma,\la\rangle} e^{2\pi i \langle \sigma -\sigma',\mu\rangle}e^{\pi i \langle J(\mu)z,z'\rangle} e^{2\pi i \langle \sigma',\omega\rangle}\left(\frac{|\mu|}{2\sinh 2\pi \tau |\mu|}\right)^{\frac m2+1+s}\times\nonumber\\  
&\times \left(\frac{|\omega|}{2\sinh 2\pi t |\omega|}\right)^{\frac m2+1-s} \times e^{-\frac \pi2 |z-z'|^2 \frac{|\mu|}{\tanh 2\pi \tau |\mu|}} e^{-\frac \pi2 |z'|^2 \frac{|\omega|}{\tanh 2\pi t |\omega|}} d g' d\omega d\mu d\sigma\nonumber\\
&=\left(\frac{|\lambda|}{2\sinh 2\pi \tau |\lambda|}\right)^{\frac m2+1+s}\int_{\R^k}\int_{\bG} e^{-2\pi i \langle \sigma',\lambda\rangle}e^{\pi i \langle J(\lambda)z,z'\rangle} e^{2\pi i \langle \sigma',\omega\rangle}  \left(\frac{|\omega|}{2\sinh 2\pi t |\omega|}\right)^{\frac m2+1-s} \times\nonumber\\
&\times e^{-\frac \pi2 |z-z'|^2 \frac{|\lambda|}{\tanh 2\pi \tau |\lambda|}} e^{-\frac \pi2 |z'|^2 \frac{|\omega|}{\tanh 2\pi t |\omega|}} d g' d\omega \nonumber\\
&=\left(\frac{|\lambda|}{2\sinh 2\pi \tau |\lambda|}\right)^{\frac m2+1+s}\left(\frac{|\lambda|}{2\sinh 2\pi t |\lambda|}\right)^{\frac m2+1-s}\times\nonumber\\
&\times\int_{\R^m} e^{\pi i \langle J(\lambda)z,z'\rangle} e^{-\frac \pi2 |z-z'|^2 \frac{|\lambda|}{\tanh 2\pi \tau |\lambda|}} e^{-\frac \pi2 |z'|^2 \frac{|\lambda|}{\tanh 2\pi t |\lambda|}} d z'.\nonumber
\end{align}

We now notice that a simple computation yields
\begin{align*}
&\frac{|z-z'|^2}{\tanh 2\pi \tau |\lambda|}+\frac{|z'|^2}{\tanh 2\pi t |\lambda|}
\\
&=\left(\frac{1}{\tanh 2\pi \tau |\lambda|}+\frac{1}{\tanh 2\pi t |\lambda|}\right)\left|z' -\frac{(\tanh 2\pi \tau |\lambda|)^{-1}}{(\tanh 2\pi \tau |\lambda|)^{-1}+(\tanh 2\pi t |\lambda|)^{-1}}  z\right|^2 
\\
&+ \frac{1}{\tanh 2\pi \tau |\lambda| + \tanh 2\pi t |\lambda|}|z|^2.
\end{align*}
Using this identity in \eqref{lhs} we  deduce
\begin{align}\label{provingclaim}
&\int_{\R^k} e^{-2\pi i \langle \sigma,\la\rangle}\int_{\bG} q_{(-s)}((g')^{-1}\circ g,\tau,0) q_{(s)}(g',t,0)\, dg'd\sigma\\
&=\left(\frac{|\lambda|}{2\sinh 2\pi \tau |\lambda|}\right)^{\frac m2+1+s}\left(\frac{|\lambda|}{2\sinh 2\pi t |\lambda|}\right)^{\frac m2+1-s}e^{-\frac \pi2 |z|^2 \frac{|\lambda|}{\tanh 2\pi \tau |\lambda| + \tanh 2\pi t |\lambda|}}
\times\nonumber\\
&\times\int_{\R^m} e^{\pi i \langle J(\lambda)z,z'\rangle} e^{-\frac \pi2 \left|z' -\frac{(\tanh 2\pi \tau |\lambda|)^{-1}}{(\tanh 2\pi \tau |\lambda|)^{-1}+(\tanh 2\pi t |\lambda|)^{-1}}  z\right|^2 \left(\frac{|\la|}{\tanh 2\pi \tau |\lambda|}+\frac{|\la|}{\tanh 2\pi t |\lambda|}\right)} d z'.\nonumber\\
&=\left(\frac{|\lambda|}{2\sinh 2\pi \tau |\lambda|}\right)^{\frac m2+1+s}\left(\frac{|\lambda|}{2\sinh 2\pi t |\lambda|}\right)^{\frac m2+1-s}e^{-\frac \pi2 |z|^2 \frac{|\lambda|}{\tanh 2\pi \tau |\lambda| + \tanh 2\pi t |\lambda|}}
\times\nonumber\\
&\times\int_{\R^m} e^{\pi i \langle J(\lambda)z,\xi\rangle} e^{-\frac \pi2 |\xi|^2 \left(\frac{|\la|}{\tanh 2\pi \tau |\lambda|}+\frac{|\la|}{\tanh 2\pi t |\lambda|}\right)} d \xi\nonumber\\
&=\left(\frac{|\lambda|}{2\sinh 2\pi \tau |\lambda|}\right)^{\frac m2+1+s}\left(\frac{|\lambda|}{2\sinh 2\pi t |\lambda|}\right)^{\frac m2+1-s}\left(\frac{|\la|}{2\tanh 2\pi \tau |\lambda|}+\frac{|\la|}{2\tanh 2\pi t |\lambda|}\right)^{-\frac m2}\times\nonumber\\
&\times e^{-\frac \pi2 |z|^2 \frac{|\lambda|}{\tanh 2\pi \tau |\lambda| + \tanh 2\pi t |\lambda|}}e^{- \frac{\pi|\la|^2  |z|^2}{4} \left(\frac{|\la|}{2\tanh 2\pi \tau |\lambda|}+\frac{|\la|}{2\tanh 2\pi t |\lambda|}\right)^{-1}},
 \nonumber
\end{align}
where we have crucially used the skew-symmetry of $J(\la)$, the fact that $|J(\la)z|=|\la||z|$, and the well-known formula $\int_{\R^m} e^{2\pi i \langle x,\xi\rangle} e^{-\pi \alpha |\xi|^2} d \xi = \alpha^{-\frac m2}e^{-\frac{\pi|x|^2}{\alpha}}$. Finally, if we insert in \eqref{provingclaim} the following two identities:
\begin{align*}
& \left(\frac{|\la|}{2\tanh 2\pi \tau |\lambda|}+\frac{|\la|}{2\tanh 2\pi t |\lambda|}\right)^{-1}=\frac{2\sinh 2\pi \tau |\lambda|}{|\la|}\frac{2\sinh 2\pi t |\lambda|}{|\la|}\frac{|\la|}{2\sinh 2\pi (t+\tau) |\lambda|},
\end{align*}
and
\begin{align*}
&\frac{|\lambda|}{\tanh 2\pi \tau |\lambda| + \tanh 2\pi t |\lambda|}+ \frac{|\la|^2 }{2} \left(\frac{|\la|}{2\tanh 2\pi \tau |\lambda|}+\frac{|\la|}{2\tanh 2\pi t |\lambda|}\right)^{-1}=\\
&=|\la|\frac{\cosh 2\pi \tau |\lambda|\cosh 2\pi t |\lambda| + \sinh 2\pi \tau |\lambda| \sinh 2\pi t |\lambda|}{\sinh 2\pi (t+\tau) |\lambda|}=\frac{|\la|}{\tanh 2\pi (t+\tau) |\lambda|},
\end{align*}
we obtain the desired conclusion \eqref{claimFourier}. This completes the proof of the lemma.
\end{proof}
}

In what follows, it will be helpful to introduce a special notation for the kernel appearing in Lemma \ref{kernallazzamento}. Given any $t,\tau>0$ and $g\in\bG$, we let
\begin{align}\label{nucleooo}
\mathscr K_{(-s,s)}(g,\tau,t)=&\int_{\R^k} e^{2\pi i \langle \sigma,\la\rangle}\left(\frac{2\pi t|\la|}{\sinh 2\pi t |\la|}\right)^{1-s}\left(\frac{2\pi \tau|\la|}{\sinh 2\pi\tau |\la|}\right)^{1+s} \times\\
&\times \left(\frac{|\la|}{2\sinh 2\pi (t+\tau) |\la|}\right)^{\frac m2} e^{-\frac \pi2 |z|^2 \frac{|\la|}{\tanh 2\pi (t+\tau) |\la|}}d\la.\nonumber
\end{align}
Moreover, for $u\in C_0^\infty(\bG)$ we define
\begin{equation}\label{defPmasomeno}
P_{(-s,s),(\tau,t)}u(g)=\int_{\bG}\mathscr K_{(-s,s)}((g')^{-1}\circ g,\tau,t)u(g') dg'.
\end{equation}
With these notations in place we now obtain from Lemma \ref{kernallazzamento} the following.
\begin{corollary}\label{heatrallazzamento}
Let $0<s<1$. For any $t,\tau>0$, we have
$$P_{(-s),\tau}P_{(s),t}=P_{(s),t}P_{(-s),\tau}=P_{(-s,s),(\tau,t)}.$$
\end{corollary}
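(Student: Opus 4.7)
The plan is to recognise that the corollary is purely an associativity-of-convolution statement, once Lemma \ref{kernallazzamento} is in hand. Recall that $P_{(\pm s),t} u = u \star \mathscr K_{(\pm s)}(\cdot,t)$, so by associativity of the group convolution on $\bG$,
$$P_{(-s),\tau}P_{(s),t} u = \bigl(u \star \mathscr K_{(s)}(\cdot,t)\bigr)\star \mathscr K_{(-s)}(\cdot,\tau) = u \star \bigl(\mathscr K_{(s)}(\cdot,t) \star \mathscr K_{(-s)}(\cdot,\tau)\bigr).$$
Unpacking the definition, the convolution kernel on the right is precisely
$$\mathscr K_{(s)}(\cdot,t) \star \mathscr K_{(-s)}(\cdot,\tau)(g) = \int_{\bG} \mathscr K_{(-s)}((g')^{-1}\circ g,\tau)\mathscr K_{(s)}(g',t)\, dg',$$
which by Lemma \ref{kernallazzamento} equals $\mathscr K_{(-s,s)}(g,\tau,t)$. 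This immediately gives the first identity $P_{(-s),\tau}P_{(s),t} = P_{(-s,s),(\tau,t)}$.

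For the reverse order $P_{(s),t}P_{(-s),\tau}$, the same associativity reduces matters to identifying the kernel
$$\mathscr K_{(-s)}(\cdot,\tau)\star \mathscr K_{(s)}(\cdot,t)(g) = \int_{\bG} \mathscr K_{(s)}((g')^{-1}\circ g,t)\,\mathscr K_{(-s)}(g',\tau)\, dg'.$$
This is exactly the integral that Lemma \ref{kernallazzamento} evaluates after swapping the symbols $s\leftrightarrow -s$ and $t\leftrightarrow \tau$. Inspecting the proof of that lemma shows that no step relies on the sign of the fractional exponent: the Gaussian integrations in $z'$, the skew-symmetry of $J(\la)$, and the hyperbolic-function identities all apply verbatim with $s$ replaced by $-s$ (the two relevant integrability conditions, namely $\tfrac m2 + 1 \pm s > 0$ and $s\in(-1,1)$, are symmetric). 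Thus the resulting kernel is given by the expression \eqref{nucleooo} with the double interchange $(s,t)\leftrightarrow(-s,\tau)$.

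The remaining observation — which is really the only content beyond reusing Lemma \ref{kernallazzamento} — is that the formula \eqref{nucleooo} is invariant under this double swap: the pair of factors $\bigl(\tfrac{2\pi t|\la|}{\sinh 2\pi t|\la|}\bigr)^{1-s}$ and $\bigl(\tfrac{2\pi\tau|\la|}{\sinh 2\pi\tau|\la|}\bigr)^{1+s}$ simply exchange their roles, while the factor depending on $t+\tau$ and the phase $e^{2\pi i\langle\sigma,\la\rangle}$ are manifestly unchanged. Hence both compositions produce convolution against $\mathscr K_{(-s,s)}(\cdot,\tau,t)$, proving $P_{(s),t}P_{(-s),\tau}=P_{(-s,s),(\tau,t)}$ and with it the commutativity asserted in the corollary. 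The only potential obstacle is a bookkeeping subtlety in verifying associativity of convolution at the level of tempered distributions, but since $u\in C_0^\infty(\bG)$ and the kernels $\mathscr K_{(\pm s)}(\cdot,t)$ are in $L^1(\bG)$ (they are bounded by the subordinated heat kernel estimates implicit in \eqref{poissono}), classical Fubini suffices.
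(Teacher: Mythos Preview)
Your proof is correct and follows essentially the same route as the paper: both use Lemma \ref{kernallazzamento} for one composition and then the invariance of the formula \eqref{nucleooo} under the double swap $(s,t)\leftrightarrow(-s,\tau)$---which is exactly the paper's first symmetry $\mathscr K_{(-s,s)}(g,\tau,t)=\mathscr K_{(s,-s)}(g,t,\tau)$---to handle the reversed composition. The only cosmetic difference is that you re-invoke Lemma \ref{kernallazzamento} with the parameters swapped, whereas the paper phrases this via left-translation invariance of Haar measure together with the inversion symmetry $\mathscr K_{(-s,s)}(g,\tau,t)=\mathscr K_{(-s,s)}(g^{-1},\tau,t)$; these are equivalent bookkeeping choices.
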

\begin{proof}
It easily follows from an application of Lemma \ref{kernallazzamento}, the invariance of the Lebesgue measure with respect to the left-translation, and from the following two symmetry properties which are valid for all $g\in\bG$
$$\mathscr K_{(-s,s)}(g,\tau,t)=\mathscr K_{(s,-s)}(g,t,\tau),$$
$$\mathscr K_{(-s,s)}(g,\tau,t)=\mathscr K_{(-s,s)}(g^{-1},\tau,t).$$

\end{proof}

In the course of the proof of Theorem \ref{T:I2sdue0} we will be mimicking that of Proposition \ref{P:Isuno}, and we will perform the following change of variables
\begin{equation}\label{change}
(t,\tau)\mapsto (v,\rho)=\left(t+\tau, \frac{t}{\tau}\right).
\end{equation}
With this in mind, it will be expedient to fix a notation for the operator in \eqref{defPmasomeno} in the new variables $(v,\rho)$. We thus define
\begin{equation}\label{defopinter}
P_{(-s,s),v}^{\rho}\overset{def}{=}P_{(-s,s),(\tau,t)}.
\end{equation}
Next, we establish a couple of useful lemmas.
\begin{lemma}\label{approxid}
Let $0<s<1$. For any function $u\in C^\infty_0(\bG)$ and any $g\in\bG$, one has
\begin{equation}\label{appA}
\underset{t\to 0^+}{\lim} P_{(\pm s),t}u (g) = u(g),
\end{equation}
\begin{equation}\label{appttau}
\underset{v\to 0^+}{\lim} P^{\rho}_{(-s,s),v}u (g) =u(g)\qquad\mbox{ for all }\rho>0.
\end{equation}
\end{lemma}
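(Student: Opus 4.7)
The plan is to recognise $\mathscr K_{(\pm s)}(\cdot,t)$ and $\mathscr K_{(-s,s)}(\cdot,\tau,t)$ as approximate identities under the intrinsic Carnot dilations $\delta_\lambda(z,\sigma)=(\lambda z,\lambda^2\sigma)$ on $\bG$, and then to conclude by a standard dominated-convergence argument for group convolutions. The starting observation is the exact scaling identity
\[
\mathscr K_{(\pm s)}((z,\sigma),t)=t^{-Q/2}\,\mathscr K_{(\pm s)}\bigl(\delta_{1/\sqrt{t}}(z,\sigma),1\bigr),\qquad Q=m+2k,
\]
which is immediate from \eqref{poissono}: after the substitution $(z,\sigma)=\delta_{\sqrt{t}}(z_0,\sigma_0)$, every occurrence of $t$ inside the $\lambda$-integral vanishes, leaving only the prefactor $t^{-(m/2+k)}$. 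The companion unit-mass property $\int_{\bG}\mathscr K_{(\pm s)}(g,1)\,dg=1$ is then obtained by integrating in $\sigma$ first: the Fourier identity $\int_{\R^k}e^{-i\langle\sigma,\lambda\rangle}d\sigma=(2\pi)^k\delta(\lambda)$ localises the inner integral at $\lambda=0$, where $|\lambda|/\sinh|\lambda|=|\lambda|/\tanh|\lambda|=1$; what remains is the classical Gaussian $(4\pi)^{-m/2}e^{-|z|^2/4}$ on $\R^m$, whose $z$-integral equals $1$.

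With these two facts in hand, a change of variables $h=\delta_{1/\sqrt{t}}((g')^{-1}\circ g)$ — which exploits the homogeneity $d(\delta_\lambda g)=\lambda^{Q}dg$ of Haar measure and the fact that Carnot dilations are group homomorphisms — rewrites the operator as
\[
P_{(\pm s),t}u(g)=\int_{\bG}\mathscr K_{(\pm s)}(h,1)\,u\bigl(g\circ\delta_{\sqrt{t}}(h^{-1})\bigr)\,dh.
\]
Since $\delta_{\sqrt{t}}(h^{-1})\to e$ for each fixed $h$ as $t\to 0^+$, the continuity of $u$ together with dominated convergence yields \eqref{appA}, provided we know $\mathscr K_{(\pm s)}(\cdot,1)\in L^{1}(\bG)$. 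For \eqref{appttau}, I would feed Corollary \ref{heatrallazzamento} into the telescoping identity
\[
P^{\rho}_{(-s,s),v}u-u=P_{(s),t}\bigl(P_{(-s),\tau}u-u\bigr)+\bigl(P_{(s),t}u-u\bigr),
\]
where $v=t+\tau$ and $t/\tau=\rho>0$ is fixed, so that both $t,\tau\to 0^+$ as $v\to 0^+$. The second summand goes to zero by \eqref{appA}. For the first, the fact that $u\in C^\infty_0(\bG)$ is uniformly continuous upgrades the previous pointwise convergence to the uniform statement $\|P_{(-s),\tau}u-u\|_\infty\to 0$ (split the $h$-integral over a large homogeneous ball and its complement, using $\mathscr K_{(-s)}(\cdot,1)\in L^{1}$); combined with the $L^{\infty}$-contractivity of $P_{(s),t}$, which follows from positivity and unit mass of its kernel, this drives the first summand to zero as well.

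The substantive technical point, and the only real obstacle, is the $L^{1}$ integrability of $\mathscr K_{(\pm s)}(\cdot,1)$, which is what legitimises the dominated convergence step. Rather than try to extract absolute decay directly from the oscillatory $\lambda$-integral \eqref{poissono}, I would invoke positivity of these kernels, which descends from their construction as rescaled restrictions at $y=0$ of the fundamental solutions $q_{(\pm s)}$ of the Baouendi-Grushin extension operators $\mathfrak P_{(\pm s)}$: recall that $q_{(\pm s)}$ was presented as a vertical convolution of the manifestly positive Baouendi-Grushin and Gaveau-Hulanicki-Cygan heat kernels, and positivity is then transferred to $\mathscr K_{(-s,s)}$ through the convolution formula of Lemma \ref{kernallazzamento}. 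Once positivity is available the total variation coincides with the total mass computed above, and the approximate-identity argument closes cleanly.
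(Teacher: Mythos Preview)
Your approach is correct and genuinely different from the paper's. The paper works on the Fourier side in the vertical variable $\sigma$: after taking a partial Fourier transform it reduces, for each fixed $\lambda$, to the classical Gauss--Weierstrass approximate identity in the horizontal variable $z\in\R^m$, and no integrability or positivity of $\mathscr K_{(\pm s)}$ is ever needed. Your route --- exploiting the parabolic scaling $\mathscr K_{(\pm s)}(\cdot,t)=t^{-Q/2}\mathscr K_{(\pm s)}(\delta_{1/\sqrt t}(\cdot),1)$ and running a standard approximate-identity argument directly on $\bG$ --- is more conceptual and reusable, and your reduction of \eqref{appttau} to \eqref{appA} via the telescoping $P_{(s),t}P_{(-s),\tau}u-u=P_{(s),t}(P_{(-s),\tau}u-u)+(P_{(s),t}u-u)$ is clean and avoids repeating the computation.

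The one step that is not as free as you suggest is the positivity of $\mathscr K_{(\pm s)}$. The Baouendi--Grushin ``heat kernel'' $p_{(\pm s)}$ entering the vertical convolution lives in a space of \emph{fractal} dimension $2(1\mp s)$, so the usual maximum-principle argument is unavailable and the kernel is certainly not ``manifestly'' positive. What you actually need is that the radial function $\lambda\mapsto\big(|\lambda|/\sinh|\lambda|\big)^{\alpha}$ is positive-definite on $\R^k$ for every $\alpha>0$; this does hold, via the Weierstrass product $\tfrac{r}{\sinh r}=\prod_{n\ge1}(1+r^2/(n\pi)^2)^{-1}$ together with the fact that each $(1+|\lambda|^2/c^2)^{-\alpha}$ is a positive mixture of Gaussians (Bessel potentials), hence positive-definite in every dimension. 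Once this is said, the convolution with the nonnegative Gaveau--Hulanicki--Cygan kernel gives $\mathscr K_{(\pm s)}\ge0$, and then $\|\mathscr K_{(\pm s)}(\cdot,1)\|_{L^1}=1$ as you computed. Alternatively, you can sidestep positivity entirely: the $\lambda$-integrand in \eqref{poissono} is Schwartz uniformly for $z$ in compacta and carries the damping $e^{-|z|^2/4}$ for large $z$ (since $|\lambda|/\tanh|\lambda|\ge1$), which already yields $\mathscr K_{(\pm s)}(\cdot,1)\in L^1(\bG)$ and uniform $L^\infty$-boundedness of $P_{(s),t}$, which is all your dominated-convergence and telescoping steps require.
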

\begin{proof}
Since the proofs of \eqref{appA} and \eqref{appttau} follow the same lines, we only provide the details of the latter.
By left-translation we can assume, without restriction, that $g=(0,\sigma_0)$ for $\sigma_0\in \R^k$. Applying Fourier transform in the vertical variables, our objective is to prove the following limiting behaviour for every fixed $\lambda\in \R^k$
\begin{equation}\label{todoperapprox}
\hat{u}(0,\lambda)=\underset{v\to 0^+}{\lim} \int_{\R^k}\int_{\bG}e^{2\pi i\langle \sigma_0,\lambda\rangle}\mathscr K_{(-s,s)}\left((z,\sigma-\sigma_0),\frac{v}{1+\rho},\frac{\rho v}{1+\rho}\right)u(z,\sigma) dz d\sigma d\sigma_0.
\end{equation}
Recalling the relevant definitions we have
\begin{align*}
&\int_{\R^k}\int_{\bG}e^{2\pi i\langle \sigma_0,\lambda\rangle}\mathscr K_{(-s,s)}\left((z,\sigma-\sigma_0),\frac{v}{1+\rho},\frac{\rho v}{1+\rho}\right)u(z,\sigma) dz d\sigma d\sigma_0\\
&=\left(\frac{2\pi \frac{v}{1+\rho}|\la|}{\sinh 2\pi \frac{ v}{1+\rho}|\la|}\right)^{1+s}\left(\frac{2\pi \frac{\rho v}{1+\rho} |\la|}{\sinh 2\pi \frac{\rho v}{1+\rho} |\la|}\right)^{1-s} \left(\frac{|\la|}{2\sinh 2\pi v |\la|}\right)^{\frac m2}\int_{\R^m}\hat{u}(z,\lambda)e^{-\frac \pi2 |z|^2 \frac{|\la|}{\tanh 2\pi v |\la|}} dz\nonumber\\
&=\left(\frac{2\pi \frac{v}{1+\rho}|\la|}{\sinh 2\pi \frac{ v}{1+\rho}|\la|}\right)^{1+s}\left(\frac{2\pi \frac{\rho v}{1+\rho} |\la|}{\sinh 2\pi \frac{\rho v}{1+\rho} |\la|}\right)^{1-s} \left(\frac{|\la|}{2\sinh 2\pi v |\la|}\right)^{\frac m2}\left(\frac{2\tanh 2\pi v |\la|}{|\la|}\right)^{\frac m2}\times\nonumber\\
&\times\int_{\R^m}\hat{u}(z,\lambda)\left(4\pi v \frac{\tanh 2\pi v |\la|}{2\pi v |\la|}\right)^{-\frac m2}e^{-\frac \pi2 |z|^2 \frac{|\la|}{\tanh 2\pi v |\la|}} dz\nonumber\\
&=\left(\frac{2\pi \frac{v}{1+\rho}|\la|}{\sinh 2\pi \frac{ v}{1+\rho}|\la|}\right)^{1+s}\left(\frac{2\pi \frac{\rho v}{1+\rho} |\la|}{\sinh 2\pi \frac{\rho v}{1+\rho} |\la|}\right)^{1-s} \left(\frac{1}{\cosh 2\pi v |\la|}\right)^{\frac m2}\times\nonumber\\
&\times\int_{\R^m}\hat{u}(z,\lambda)\left(4\pi v \frac{\tanh 2\pi v |\la|}{2\pi v |\la|}\right)^{-\frac m2}e^{-\frac \pi2 |z|^2 \frac{|\la|}{\tanh 2\pi v |\la|}} dz.
\end{align*}
The last term in the previous chain of identities does converge to $\hat{u}(0,\lambda)$ as $v\to 0^+$: one can in fact recognize in the last integral the convolution with the Gauss-Weierstrass kernel in $\R^m$ (with pole at $z=0$, at time $v \frac{\tanh 2\pi v |\la|}{2\pi v |\la|}$) which is well-known that approximates the identity. This proves \eqref{todoperapprox}. 

\end{proof}

\begin{lemma}
Let $0<s<1$. For any $u\in C_0^\infty(\bG)$ and $g\in\bG$ we have
\begin{equation}\label{appinfA}
\underset{t\to +\infty}{\lim} P_{(\pm s),t}u (g) = 0,
\end{equation}
and
\begin{equation}\label{appinfv}
\underset{v\to +\infty}{\lim} P^\rho_{(-s,s),v}u (g)=0\qquad\mbox{ for any }\,\rho>0.
\end{equation}
\end{lemma}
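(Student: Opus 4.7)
The plan is to establish both limits by the same elementary device: I will prove uniform pointwise bounds on the relevant kernels that decay polynomially in $t$, respectively in $v$, and then combine these with the trivial convolution estimate
$$|Pu(g)| \le \|\text{kernel}(\cdot,t)\|_{L^\infty(\bG)} \|u\|_{L^1(\bG)},$$
which is applicable since $u\in C^\infty_0(\bG)\subset L^1(\bG)$.

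For \eqref{appinfA}, I start from the integral representation \eqref{poissono} of $\mathscr K_{(s)}$, and the obvious analogue for $\mathscr K_{(-s)}$. The complex exponential has modulus $1$, the real exponential $\exp(-|z|^2|\lambda|/(4t\tanh|\lambda|))$ is bounded by $1$, and the factor $(|\lambda|/\sinh|\lambda|)^{\frac{m}{2}+1\mp s}$ is integrable on $\R^k$ since the map $|\lambda|/\sinh|\lambda|$ is bounded near the origin and decays exponentially at infinity. Therefore
$$\|\mathscr K_{(\pm s)}(\cdot,t)\|_{L^\infty(\bG)} \le \frac{2^k}{(4\pi t)^{\frac{m}{2}+k}} \int_{\R^k}\left(\frac{|\lambda|}{\sinh|\lambda|}\right)^{\frac{m}{2}+1\mp s}d\lambda = \frac{C_{m,k,s}^{\pm}}{t^{\frac{m}{2}+k}} \underset{t\to+\infty}{\longrightarrow} 0,$$
which suffices to conclude \eqref{appinfA}.

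For \eqref{appinfv}, I would apply the same strategy to $\mathscr K_{(-s,s)}$ in \eqref{nucleooo}, with the preparatory rescaling $\mu = 2\pi v\lambda$, $v = t+\tau$. Using $t/v = \rho/(1+\rho)$ and $\tau/v = 1/(1+\rho)$, the kernel becomes
$$\mathscr K_{(-s,s)}(g,\tau,t) = \frac{1}{(2\pi v)^k (4\pi v)^{\frac{m}{2}}} \int_{\R^k} e^{\frac{i}{v}\langle\sigma,\mu\rangle}\, G_\rho(\mu)\, e^{-\frac{|z|^2}{4v}\frac{|\mu|}{\tanh|\mu|}}\, d\mu,$$
where $G_\rho(\mu)$ is a product of three factors of type $(c|\mu|/\sinh c|\mu|)^{\alpha}$ with $c\in\{\rho/(1+\rho),\,1/(1+\rho),\,1\}$ and positive exponents $\alpha\in\{1-s,\,1+s,\,m/2\}$. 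Each such factor lies in $[0,1]$ on $\R^k$, so the integrand is dominated by the integrable function $(|\mu|/\sinh|\mu|)^{\frac{m}{2}}$. Hence
$$\|\mathscr K_{(-s,s)}(\cdot,\tau,t)\|_{L^\infty(\bG)} \le \frac{C_{m,k}}{v^{\frac{m}{2}+k}},$$
uniformly in $\rho>0$, and the convolution estimate above yields \eqref{appinfv}.

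There is no real obstacle here: all the work is in the bookkeeping of the change of variable $\mu=2\pi v\lambda$, which must be executed simultaneously on all three hyperbolic-type factors and on the two exponentials in order to extract the correct $v$-decay. Once the kernel is bounded as above, the conclusion is automatic from $\|u\|_{L^1(\bG)}<\infty$.
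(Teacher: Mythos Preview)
Your argument is correct. The change of variable $\mu = 2\pi v\lambda$ in \eqref{nucleooo} is carried out accurately (the three hyperbolic factors indeed become $(c|\mu|/\sinh c|\mu|)^\alpha$ with $c\in\{\rho/(1+\rho),\,1/(1+\rho),\,1\}$), each of these factors lies in $(0,1]$, and the resulting domination by $(|\mu|/\sinh|\mu|)^{m/2}\in L^1(\R^k)$ gives the uniform $L^\infty$ bound $C_{m,k}\,v^{-(m/2+k)}$ on the kernel. The convolution estimate then finishes both parts.

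The paper proceeds differently. Rather than bounding the kernel, it takes the partial Fourier transform of $u$ in the vertical variable $\sigma$, writes $P^\rho_{(-s,s),v}u(0)$ as an integral of $\hat u(z,\lambda)$ against the same hyperbolic factors (now without any rescaling), and invokes dominated convergence using that $\hat u$ is Schwartz. Your route is more elementary in that it avoids Fourier analysis altogether and yields, as a bonus, an explicit polynomial decay rate $v^{-(m/2+k)}$ that is uniform in $\rho>0$; this uniformity is not visible in the paper's argument. The paper's approach, on the other hand, fits naturally with the Fourier-side computations already in play in that section (Lemmas \ref{kernallazzamento} and \ref{approxid}) and would extend more directly if one wanted limits in stronger topologies.
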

\begin{proof}
The proofs of \eqref{appinfA} and \eqref{appinfv} follow the same lines. We provide here the details of the proof of \eqref{appinfv}.
Without loss of generality we can assume that $g=0$. If we denote by $\hat{u}$ the partial Fourier transform in the vertical variables, then we can rewrite
\begin{align*}
&P^\rho_{(-s,s),v}u (0)=\int_{\R^m}\int_{\R^k} \hat{u}(z,\lambda)\left(\frac{2\pi \frac{v}{1+\rho}|\la|}{\sinh 2\pi \frac{ v}{1+\rho}|\la|}\right)^{1+s}\left(\frac{2\pi \frac{\rho v}{1+\rho} |\la|}{\sinh 2\pi \frac{\rho v}{1+\rho} |\la|}\right)^{1-s}\times\\
&\times \left(\frac{|\la|}{2\sinh 2\pi v |\la|}\right)^{\frac m2}  e^{-\frac \pi2 |z|^2 \frac{|\la|}{\tanh 2\pi v |\la|}}d\la dz.\nonumber
\end{align*}
Since $\hat{u}$ is in Schwartz class we can pass the limit as $v$ tends to $\infty$ inside the integral and conclude \eqref{appinfv}.

\end{proof}

At a technical level, the striking difference between the proofs of Proposition \ref{P:Isuno} and Theorem \ref{T:I2sdue0} is the dependence on $\rho$ of the operator $P^\rho_{(-s,s),v}$ defined in \eqref{defopinter}. As a matter of fact, we recall that $P^\rho_{(-s,s),v}$ is nothing but the composition of $P_{(-s),\tau}$ and $P_{(s),t}$ in the new variables \eqref{change}, and the nonconformal analogue in Proposition \ref{P:Isuno} is just given by $P_{\tau}P_{t}=P_{\tau + t}=P_v$, which is obviously $\rho$-independent. However, even if $P^\rho_{(-s,s),v}$ depends on $\rho$ in a highly non-trivial way, there is a remarkable hidden cancellation property which is satisfied by a precise weighted average of $\frac{\partial}{\partial\rho}P^\rho_{(-s,s),v}$. Such cancellation is encoded in the vanishing of the integral $A(s,\mu)$ in the next lemma, and its relation with $P^\rho_{(-s,s),v}$ will be clarified in the proof of Theorem \ref{T:I2sdue0}.

\begin{lemma}\label{lemmaMonster}
For any $s\in(-1,1)$ and $\mu>0$ we have
\begin{align}\label{monstersolved}
A(s,\mu)\overset{def}{=}\int_0^\infty\rho^{s-1}&\left[\frac{(1+s)\rho}{1+\rho}\left(\frac{\frac{\mu}{1+\rho}}{\tanh \frac{\mu}{1+\rho}}-1 \right)-\frac{1-s}{1+\rho}\left(\frac{\frac{\rho \mu}{1+\rho}}{\tanh \frac{\rho \mu}{1+\rho}}-1\right)\right]\times\\
&\times\left(\frac{\frac{\rho \mu}{1+\rho}}{\sinh \frac{\rho \mu}{1+\rho}}\right)^{1-s}\left(\frac{\frac{\mu}{1+\rho}}{\sinh \frac{ \mu}{1+\rho}}\right)^{1+s} d\rho\ =\ 0.\nonumber
\end{align}
\end{lemma}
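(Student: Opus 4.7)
The plan is to recognise the complicated integrand as a total derivative in $\rho$, then to exploit the algebraic simplification that occurs after passing to the natural Baouendi-Grushin variable $\alpha=\mu/(1+\rho)$. Set $g(x):=x/\sinh x$, $a:=\mu/(1+\rho)$, $b:=\rho\mu/(1+\rho)$, and
\[
H_s(\rho):=g(a)^{1+s}\,g(b)^{1-s}.
\]
Since $g'(x)/g(x)=1/x-\coth x$ and $a'(\rho)=-\mu/(1+\rho)^2=-b'(\rho)$, a direct calculation yields the identity
\[
\frac{\rho\,H_s'(\rho)}{H_s(\rho)}=\frac{(1+s)\rho}{1+\rho}\Bigl(\frac{a}{\tanh a}-1\Bigr)-\frac{1-s}{1+\rho}\Bigl(\frac{b}{\tanh b}-1\Bigr),
\]
i.e.\ the whole bracket in the statement. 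Thus the integrand of $A(s,\mu)$ collapses to $\rho^s H_s'(\rho)\,d\rho$.

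Next I would change variables to $\alpha=a$, noting that $b=\mu-\alpha$ and $d\rho=-\mu\alpha^{-2}\,d\alpha$ send $(0,\infty)\to(0,\mu)$. Setting $K(\alpha):=g(\alpha)^{1+s}g(\mu-\alpha)^{1-s}$, one finds
\[
A(s,\mu)=-\int_0^{\mu}\Bigl(\tfrac{\mu-\alpha}{\alpha}\Bigr)^{s}K'(\alpha)\,d\alpha.
\]
The crucial simplification is that the factor $\chi(\alpha):=((\mu-\alpha)/\alpha)^sK(\alpha)$ reduces to the manifestly symmetric quantity
\[
\chi(\alpha)=\frac{\alpha(\mu-\alpha)}{\sinh^{1+s}\alpha\,\sinh^{1-s}(\mu-\alpha)}.
\]
Differentiating $\chi$ and subtracting off what $\bigl(\tfrac{\mu-\alpha}{\alpha}\bigr)^sK'(\alpha)$ would otherwise produce gives the pointwise identity
\[
\Bigl(\tfrac{\mu-\alpha}{\alpha}\Bigr)^{s}K'(\alpha)=\chi'(\alpha)+\frac{s\mu}{\sinh^{1+s}\alpha\,\sinh^{1-s}(\mu-\alpha)}.
\]

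Integrating this identity over the truncated interval $(\epsilon,\mu-\epsilon)$ yields $-A(s,\mu)=\lim_{\epsilon\to 0^+}\{[\chi(\mu-\epsilon)-\chi(\epsilon)]+s\mu J_\epsilon(s,\mu)\}$ where $J_\epsilon$ denotes the truncated singular integral. When $s=0$ both $\chi(0^+)$ and $\chi(\mu^-)$ equal $\mu/\sinh\mu$ and the $s\mu J$ term vanishes, giving $A(0,\mu)=0$ at once. For $s\ne 0$ each piece individually diverges at one endpoint, but a Taylor expansion of $\chi$ and of the singular integrand shows that the divergent leading parts match and cancel exactly, leaving a finite remainder. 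To verify that this finite remainder is precisely zero, I would pass to the simplex variable $\tau=1/(1+\rho)\in(0,1)$ and, after subtracting the asymptotic value $H_s(\infty)=g(\mu)^{1-s}$ and performing one clean integration by parts (the boundary terms vanish for $-1<s<1$), arrive at
\[
A(s,\mu)=-s\int_0^{1}\tau^{-s-1}(1-\tau)^{s-1}\Bigl[g(\mu\tau)^{1+s}g(\mu(1-\tau))^{1-s}-g(\mu)^{1-s}\Bigr]d\tau.
\]

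The main obstacle is the last step: showing this regularised Beta-type integral is identically zero. My plan is to expand the bracket in powers of $\mu^2$ (using that $g$ is even, so $g(x)^a=\sum_{k\geq 0}c_k^{(a)}x^{2k}$), interchange summation and integration, and verify order by order that the resulting finite sum of Beta integrals collapses by means of the Pochhammer identity $\Gamma(z+1)=z\Gamma(z)$ and the reflection formula. The $n=1$ coefficient vanishes thanks to the elementary relation $(1-s)B(1-s,s)=B(2-s,s)$; at higher order I would recognise the inner sum as the value at unity of a truncated Gauss hypergeometric function and appeal to the Gauss summation formula (together with the identity \eqref{hyperGint} already recalled in the preliminaries) to see that every coefficient vanishes. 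This combinatorial bookkeeping is the technical heart of the proof.
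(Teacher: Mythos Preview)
Your opening reduction is correct and is essentially the heart of the matter: with $g(x)=x/\sinh x$, $a=\mu/(1+\rho)$, $b=\rho\mu/(1+\rho)$ and $H_s=g(a)^{1+s}g(b)^{1-s}$, the integrand of $A(s,\mu)$ is exactly $\rho^{s}H_s'(\rho)$. Your change to $\alpha=a$ and the identity
\[
\Bigl(\tfrac{\mu-\alpha}{\alpha}\Bigr)^{s}K'(\alpha)=\chi'(\alpha)+\frac{s\mu}{\sinh^{1+s}\alpha\,\sinh^{1-s}(\mu-\alpha)},\qquad \chi(\alpha)=\frac{\alpha(\mu-\alpha)}{\sinh^{1+s}\alpha\,\sinh^{1-s}(\mu-\alpha)},
\]
are also correct. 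The gap is in what you do next. You treat the second summand as an obstruction and try to dispose of it by a delicate endpoint cancellation followed by a power-series expansion in $\mu^{2}$ and an appeal to Gauss summation. That last step is never carried out; the coefficients $c_k^{(a)}$ of $g(x)^a$ are degree-$k$ polynomials in $a$ built from Bernoulli numbers, and there is no evident mechanism that turns the resulting double sum of Beta integrals into a terminating hypergeometric series evaluable by Gauss' formula. As written, the ``combinatorial bookkeeping'' is a hope, not a proof. (Separately, your claim that the boundary terms in the $\tau$-integration by parts vanish for all $-1<s<1$ is false when $s<0$: at $\rho=0$ one has $\rho^{s}\bigl(H_s(\rho)-g(\mu)^{1-s}\bigr)\to\infty$.)

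What you are missing is that the ``leftover'' term is \emph{also} an exact derivative:
\[
\frac{s\mu}{\sinh^{1+s}\alpha\,\sinh^{1-s}(\mu-\alpha)}=\frac{d}{d\alpha}\Bigl[-\,g(\mu)\Bigl(\frac{\sinh(\mu-\alpha)}{\sinh\alpha}\Bigr)^{s}\Bigr],
\]
since $\sinh(\mu-\alpha)\cosh\alpha+\cosh(\mu-\alpha)\sinh\alpha=\sinh\mu$. Hence the whole integrand is the $\alpha$-derivative of
\[
\chi(\alpha)-g(\mu)\Bigl(\frac{\sinh(\mu-\alpha)}{\sinh\alpha}\Bigr)^{s}
=\Bigl(\frac{\sinh(\mu-\alpha)}{\sinh\alpha}\Bigr)^{s}\bigl[g(\alpha)g(\mu-\alpha)-g(\mu)\bigr],
\]
which is precisely the paper's antiderivative $h_{s,\mu}$ rewritten in the $\alpha$-variable. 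In this factored form one sees at once that it is $O(\alpha^{1-s})$ as $\alpha\to 0^+$ and $O((\mu-\alpha)^{1+s})$ as $\alpha\to\mu^-$, so it vanishes at both endpoints for every $s\in(-1,1)$, and $A(s,\mu)=0$ follows immediately. Your approach was one short observation away from the paper's; the power-series detour can be discarded.
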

\begin{proof}
Fix $s\in(-1,1)$ and $\mu>0$. Consider the function $h_{s,\mu}:(0,\infty)\mapsto \R$ defined by
$$h_{s,\mu}(\rho)=\frac{\mu}{\sinh\mu}\left(\frac{\sinh \frac{\rho \mu}{1+\rho}}{\sinh \frac{ \mu}{1+\rho}}\right)^s\left[\frac{\mu}{\sinh\mu}\frac{\rho}{(1+\rho)^2}\left(\frac{\sinh \frac{\rho \mu}{1+\rho}}{\sinh \frac{ \mu}{1+\rho}} +2\cosh\mu + \frac{\sinh \frac{ \mu}{1+\rho}}{\sinh \frac{\rho \mu}{1+\rho}} \right)-1\right].$$
By Taylor expansion it is not difficult to verify that
$$
h_{s,\mu}(\rho)=O(\rho^{s+1}) \mbox{ as }\rho\to 0^+\quad\mbox{and}\quad h_{s,\mu}(\rho)=O(\rho^{s-1})\mbox{ as }\rho\to \infty.
$$
Since $s\in(-1,1)$, this implies that
\begin{equation}\label{estremi}
h_{s,\mu}(0^+)=0\qquad\mbox{ and }\qquad h_{s,\mu}(+\infty)=0.
\end{equation}
A straightforward computation shows that
\begin{align*}
&h_{s,\mu}'(\rho)=\frac{\mu}{\sinh\mu}\left(\frac{\sinh \frac{\rho \mu}{1+\rho}}{\sinh \frac{ \mu}{1+\rho}}\right)^{s-1}\times\\
&\times\left[\frac{\mu}{\sinh\mu}\left(\frac{1}{(1+\rho)^2}-\frac{2\rho}{(1+\rho)^3}\right)\left(\frac{\sinh^2 \frac{\rho \mu}{1+\rho}}{\sinh^2 \frac{ \mu}{1+\rho}}+2\cosh\mu\frac{\sinh \frac{\rho \mu}{1+\rho}}{\sinh \frac{ \mu}{1+\rho}} + 1\right)\right.\\
&\left.-\frac{s\mu\sinh\mu}{(1+\rho)^2\sinh^2\frac{ \mu}{1+\rho}}+\frac{\mu^2\rho}{(1+\rho)^4\sinh^2 \frac{ \mu}{1+\rho}}\left((1+s)\frac{\sinh \frac{\rho \mu}{1+\rho}}{\sinh \frac{ \mu}{1+\rho}} +2s\cosh\mu + (s-1)\frac{\sinh \frac{ \mu}{1+\rho}}{\sinh \frac{\rho \mu}{1+\rho}}\right)\right]\\
&=\rho^{s-1}\left(\frac{\frac{\rho \mu}{1+\rho}}{\sinh \frac{\rho \mu}{1+\rho}}\right)^{1-s}\left(\frac{\frac{\mu}{1+\rho}}{\sinh \frac{ \mu}{1+\rho}}\right)^{1+s}\times\\
&\times\left[\frac{1}{\sinh^2\mu}\left(1-\frac{2\rho}{1+\rho}\right)\left(\sinh^2\frac{\rho \mu}{1+\rho}+2\cosh\mu\sinh \frac{\rho \mu}{1+\rho} \sinh \frac{ \mu}{1+\rho} + \sinh^2 \frac{ \mu}{1+\rho}\right)\right.\\
&\left.-s+\frac{\rho\mu}{(1+\rho)^2\sinh\mu}\left((1+s)\frac{\sinh \frac{\rho \mu}{1+\rho}}{\sinh \frac{ \mu}{1+\rho}} +2s\cosh\mu + (s-1)\frac{\sinh \frac{ \mu}{1+\rho}}{\sinh \frac{\rho \mu}{1+\rho}}\right)\right].
\end{align*}
If we substitute in the previous computation the following three identities:
$$
\sinh^2\mu=\sinh^2\frac{\rho \mu}{1+\rho}+2\cosh\mu\sinh \frac{\rho \mu}{1+\rho}\sinh \frac{ \mu}{1+\rho} + \sinh^2 \frac{ \mu}{1+\rho},
$$
$$
\frac{\sinh \frac{\rho \mu}{1+\rho}}{\sinh \frac{ \mu}{1+\rho}}+\cosh\mu=\frac{\sinh\mu}{\tanh \frac{ \mu}{1+\rho}},
$$
and
$$
\cosh\mu+\frac{\sinh \frac{ \mu}{1+\rho}}{\sinh \frac{\rho \mu}{1+\rho}}=\frac{\sinh\mu}{\tanh \frac{\rho \mu}{1+\rho}},
$$
we obtain
\begin{align}\label{derivacca}
h_{s,\mu}'(\rho)&=\rho^{s-1}\left(\frac{\frac{\rho \mu}{1+\rho}}{\sinh \frac{\rho \mu}{1+\rho}}\right)^{1-s}\left(\frac{\frac{\mu}{1+\rho}}{\sinh \frac{ \mu}{1+\rho}}\right)^{1+s}\left[1-s-\frac{2\rho}{1+\rho}\right.
\\
&\left.+\frac{\mu\rho}{(1+\rho)^2}\left(\frac{1+s}{\tanh \frac{\mu}{1+\rho}}-\frac{1-s}{\tanh \frac{\rho \mu}{1+\rho}}
\right)\right].
\nonumber
\end{align}
Comparing the terms in \eqref{derivacca} with the ones in the definition of $A(s,\mu)$ and using \eqref{estremi}, we finally obtain
$$
A(s,\mu)=\int_{0}^{\infty}h_{s,\mu}'(\rho)d\rho =0,
$$
which is the desired conclusion.

\end{proof}

We are finally in a position to prove Theorem \ref{T:I2sdue0}.

\begin{proof}[Proof of Theorem \ref{T:I2sdue0}]
Fix $u\in C^\infty_0(\bG)$ and $s\in (0,1)$. For any $g \in \bG$, using the definition \eqref{RTs} and the limiting behavior in \eqref{appA} we can write
\begin{align}\label{baladeriva}
& \mathscr L_s u(g) = \frac{-s}{\G(1-s)} \int_0^\infty t^{-s-1}\int_0^t \frac{\partial}{\partial\tau}\left(P_{(-s),\tau}u (g)\right) d\tau dt
\\
&= \frac{-s }{\G(1-s)} \int_0^\infty \int_{\tau}^{\infty} t^{-s-1} \frac{\partial}{\partial\tau}\left(P_{(-s),\tau}u (g)\right) dt d\tau\nonumber
\\
&= \frac{-1}{\G(1-s)} \int_0^\infty \tau^{-s} \frac{\partial}{\partial\tau}\left(P_{(-s),\tau}u (g) \right) d\tau. \nonumber
\end{align}
From \eqref{baladeriva} and the definition \eqref{conformalriesz} we then obtain
\begin{align}\label{compos}
& \mathscr I_{(2s)} (\mathscr L_s u)(g)= \frac{1}{\G(s)} \int_0^\infty t^{s}P_{(s),t}\left(\mathscr L_s u\right)(g)  \frac{dt}t\\
&= -\frac{1}{\G(s)\G(1-s)} \int_0^\infty\int_0^\infty \frac{t^s}{\tau^{s}} P_{(s),t}\left(\frac{\partial}{\partial\tau}\left(P_{(-s),\tau}u\right)\right)(g)d\tau \frac{dt}t\nonumber\\
&= -\frac{1}{\G(s)\G(1-s)} \int_0^\infty\int_0^\infty \frac{t^s}{\tau^{s}} \frac{\partial}{\partial\tau}\left(P_{(s),t}P_{(-s),\tau}u (g)\right) d\tau \frac{dt}t.\nonumber
\end{align}
We notice that one can justify the chain of equalities in \eqref{baladeriva}-\eqref{compos} by recognising that the involved kernels (which are explicit) enjoy the right summability properties.
 
By \eqref{compos} and Corollary \ref{heatrallazzamento}, we thus deduce
$$
\mathscr I_{(2s)} (\mathscr L_s u)(g)=-\frac{1}{\G(s)\G(1-s)} \int_0^\infty\int_0^\infty \frac{t^s}{\tau^{s}} \frac{\partial}{\partial\tau}\left(P_{(-s,s),(\tau,t)} u (g)\right) d\tau \frac{dt}t.
$$
We now perform the change of variables in \eqref{change}, and we use that $dtd\tau=\frac{v}{(1+\rho)^2}dv d\rho$ and
$$
\frac{\partial}{\partial\tau}=\frac{\partial}{\partial v} - \frac{\rho(1+\rho)}{v}\frac{\partial}{\partial\rho}.
$$
This leads to the following identity
\begin{equation}\label{theone}
\mathscr I_{(2s)} (\mathscr L_s u)(g)=-\frac{1}{\G(s)\G(1-s)} \int_0^\infty\int_0^\infty \frac{\rho^{s-1}}{1+\rho}\left(\frac{\partial}{\partial v} - \frac{\rho(1+\rho)}{v}\frac{\partial}{\partial\rho}\right)P^\rho_{(-s,s),v}u (g) dvd\rho.
\end{equation}
Exploiting \eqref{appttau} and \eqref{appinfv} we realize that
$$
-\frac{1}{\G(s)\G(1-s)} \int_0^\infty\int_0^\infty \frac{\rho^{s-1}}{1+\rho}\frac{\partial}{\partial v}P^\rho_{(-s,s),v}u (g) dvd\rho=\frac{u(g)}{\G(s)\G(1-s)} \int_0^\infty \frac{\rho^{s-1}}{1+\rho}d\rho=u(g).
$$
Therefore, from \eqref{theone} we conclude the following
\begin{equation}\label{hereweare}
\mathscr I_{(2s)} (\mathscr L_s u)(g)=u(g)+\frac{s}{\G(1+s)\G(1-s)} \int_0^\infty\int_0^\infty \frac{\rho^{s}}{v}\frac{\partial}{\partial\rho}P^\rho_{(-s,s),v}u (g) dvd\rho.
\end{equation}
Having in mind \eqref{nucleooo}, a direct computation yields
\begin{align*}
&\frac{\partial}{\partial\rho}P^\rho_{(-s,s),v}u (g)\\
&=\int_{\bG}u(g\circ g')\int_{\R^k} e^{2\pi i \langle \sigma',\la\rangle}\left[\frac{1+s}{1+\rho}\left( \frac{2\pi \frac{v}{1+\rho} |\la|}{\tanh 2\pi \frac{v}{1+\rho} |\la|}-1 \right)-\frac{1-s}{\rho(1+\rho)}\left( \frac{2\pi \frac{\rho v}{1+\rho} |\la|}{\tanh 2\pi \frac{\rho v}{1+\rho} |\la|}-1\right)\right]\times\\
&\times\left(\frac{2\pi \frac{\rho v}{1+\rho} |\la|}{\sinh 2\pi \frac{\rho v}{1+\rho} |\la|}\right)^{1-s}\left(\frac{2\pi \frac{v}{1+\rho}|\la|}{\sinh 2\pi \frac{ v}{1+\rho}|\la|}\right)^{1+s} \left(\frac{|\la|}{2\sinh 2\pi v |\la|}\right)^{\frac m2}  e^{-\frac \pi2 |z'|^2 \frac{|\la|}{\tanh 2\pi v |\la|}}d\la dg'.
\end{align*}
Hence, recalling the definition of $A(s,\mu)$ in \eqref{monstersolved}, we deduce
\begin{align*}
&\int_0^\infty\int_0^\infty \frac{\rho^{s}}{v}\frac{\partial}{\partial\rho}P^\rho_{(-s,s),v}u (g) dvd\rho\\
&=\int_0^\infty\int_{\bG}\int_{\R^k}\frac{1}{v}u(g\circ g')e^{2\pi i \langle \sigma',\la\rangle} \left(\frac{|\la|}{2\sinh 2\pi v |\la|}\right)^{\frac m2}  e^{-\frac \pi2 |z'|^2 \frac{|\la|}{\tanh 2\pi v |\la|}} A(s,2\pi v|\la|)d\la dg' dv\\
&=0,
\end{align*}
where in the last equality we have used Lemma \ref{lemmaMonster}. Therefore, from \eqref{hereweare}, we infer
$$
\mathscr I_{(2s)} (\mathscr L_s u)(g)=u(g).
$$
The proof of the reversed relation $\mathscr L_s( \mathscr I_{(2s)} u)=u$ is completely analogous and it corresponds formally to the change $s\mapsto -s$ in the above proof (keeping in mind the commutation relation $P_{(s),t}P_{(-s),\tau}=P_{(-s),\tau}P_{(s),t}$ and the fact that $A(s,\mu)$ vanishes also for $s\in (-1,0)$ by Lemma \ref{lemmaMonster}). 
 
\end{proof}


 

\section{Unraveling the kernels}\label{S:one}

In this final section we prove Theorems \ref{T:poisson} and \ref{T:thick}. To motivate our first set of
results we observe that a direct consequence of Proposition \ref{P:Isuno} is that the kernel $\mathscr E^{(s)}(g)$ in  \eqref{EsH0}
 constitutes the fundamental solution of the nonlocal operator $\mathscr L^s$ with pole at the group identity.


\subsection{Non-conformal fundamental solution}

In our first result, Theorem \ref{T:fss} below, we provide an explicit integral expression for such kernel in the setting of groups of Heisenberg type. In discrepancy with Theorem \ref{T:poisson} and Corollary \ref{C:Htypes} such result shows in particular that, in the logarithmic coordinates $g = (z,\sigma)\in \bG$, one has $\mathscr E^{(s)}(z,\sigma) = \Phi(|z|^4,|\sigma|^2)$, but gauge symmetry breaks down when $0<s<1$.  
In what follows the notation $F(\alpha,\beta;\gamma;x)$ indicates Gauss hypergeometric function, see Section \ref{S:prelim}.

\begin{theorem}\label{T:fss}
Let $\bG$ be a group of Heisenberg type. For any $0<s< 1$ the fundamental solution \eqref{EsH0} of the operator $\mathscr L^s$ with pole at the group identity is given by the formula
\begin{align}\label{EsH5}
\mathscr E^{(s)}(z,\sigma) & = \frac{2^{k- 2s}\G(\frac m2 + k - s)}{\pi^{\frac{m+k}2}\Gamma(s)\G(\frac k2)} \frac{1}{ |z|^{2(\frac m2 + k - s)}} \int_0^1 (\tanh^{-1} \sqrt y)^{s-1} \left(1-y\right)^{\frac m4-1}  y^{\frac 12(k - s-1)}
\\
& \times  F\left(\frac{1}{2}(\frac m2 + k - s),\frac 12(\frac m2 + k+1-s);\frac k2;- \frac{16|\sigma|^2}{|z|^4} y\right) \ dy. 
\notag
\end{align}
In particular, such function depends only on $|z|^4$ and $|\sigma|^2$, but it is not a function of the gauge $N(z,\sigma) = (|z|^4 + 16 |\sigma|^2)^{1/4}$.
\end{theorem}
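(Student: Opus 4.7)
\medskip

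\noindent\emph{Proof plan.}
The plan is to start from the definition \eqref{EsH0}, substitute the Hulanicki--Gaveau--Cygan heat kernel \eqref{pipposemprepiupiccoloHoo}, and collapse the resulting three-fold integral (in $t$, $\lambda\in\R^k$, $g$) to the claimed one by a short chain of classical identities. After verifying that Fubini is available (the absolute value of the integrand is controlled by $t^{\Re s-1}\,p((z,\sigma),t)$, which is locally integrable in $t$ away from the origin and rapidly decaying at infinity), I would first pass to polar coordinates $\lambda=\rho\omega$ in $\R^k$, separating the factor $e^{-i\rho\langle \sigma,\omega\rangle/t}$ from the radial piece $(\rho/\sinh\rho)^{m/2}e^{-|z|^2\rho/(4t\tanh\rho)}$. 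Integration over $S^{k-1}$ via Proposition \ref{P:FTspheren} produces a Bessel factor $J_{\frac k2-1}(\rho|\sigma|/t)$ with an overall constant $(2\pi)^{k/2}(\rho|\sigma|/t)^{1-\frac k2}$.

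Next, I would change variable $u=1/t$ so that the inner $t$-integral becomes
\[
\int_0^\infty u^{\frac m2+\frac k2-s}J_{\frac k2-1}(\rho u|\sigma|)e^{-\alpha u}\,du,\qquad \alpha=\frac{|z|^2\rho}{4\tanh\rho},
\]
to which Gegenbauer's identity \eqref{ohoh} applies with $\mu=\frac m2+\frac k2+1-s$, $\nu=\frac k2-1$ (both hypotheses $\Re(\nu+\mu)>0$ and $\Re(\alpha\pm i\rho|\sigma|)>0$ being immediate). The output is a Gauss hypergeometric function with argument $\rho^2|\sigma|^2/(\alpha^2+\rho^2|\sigma|^2)$, preceded by $(\alpha^2+\rho^2|\sigma|^2)^{-(\frac m2+k-s)/2}$. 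The crucial next move is Kummer's relation \eqref{hyperG}, applied to change the argument to $-\rho^2|\sigma|^2/\alpha^2=-16|\sigma|^2\tanh^2\rho/|z|^4$ with the parameters $\bigl(\tfrac12(\tfrac m2+k-s),\tfrac12(\tfrac m2+k+1-s);\tfrac k2\bigr)$ of the statement. Here a clean cancellation occurs: the factor $(1-u_{\text K})^{-(\frac m2+k-s)/2}$ produced by Kummer combines with the Gegenbauer denominator to leave exactly $\alpha^{-(\frac m2+k-s)}=(4\tanh\rho)^{\frac m2+k-s}\,|z|^{-2(\frac m2+k-s)}\rho^{-(\frac m2+k-s)}$, which is the only source of the $|z|^{-2(\frac m2+k-s)}$ sitting outside the integral in \eqref{EsH5}.

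Finally, I would perform the substitution $y=\tanh^2\rho$, so that $d\rho=\frac1{2\sqrt y(1-y)}dy$ and $\sinh^2\rho=y/(1-y)$. Collecting powers of $\rho$ yields the key identity $\rho^{k-1+\frac m2-(\frac m2+k-s)}=\rho^{s-1}$, which after the substitution produces the $(\tanh^{-1}\sqrt y)^{s-1}$ factor of the statement; the hyperbolic factors combine with the Jacobian to $(1-y)^{\frac m4-1}y^{(k-s-1)/2}$, and the hypergeometric argument becomes precisely $-16|\sigma|^2 y/|z|^4$. A routine tally of the numerical constants (the $\frac12$ from the Jacobian absorbs one factor of $2$ present in the prefactor) then gives the claimed normalization. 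For the final assertion, the structural reason for the breakdown of gauge symmetry is transparent from this derivation: the factor $(\tanh^{-1}\sqrt y)^{s-1}$ is exactly the obstruction to applying Bateman's identity \eqref{hyperGint}. When $s=1$ this factor trivializes, Bateman reduces the $y$-integral to a $_1F_0(\alpha;-\eta)=(1+\eta)^{-\alpha}$ via \eqref{fs6}, and the gauge form of Corollary \ref{C:Htypes} emerges; for $0<s<1$ no such collapse is possible, and $\mathscr E^{(s)}$ depends genuinely on the two separate invariants $|z|^4$ and $|\sigma|^2$.

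The main obstacle I expect is not conceptual but bookkeeping: the derivation requires that five substitutions (polar coordinates, $u=1/t$, Gegenbauer, Kummer, $y=\tanh^2\rho$) line up perfectly, with the hypotheses $\Re(\nu+\mu)>0$, $\Re(\alpha\pm i\beta)>0$, $|\arg(1-u_{\text K})|<\pi$ checked in the appropriate ranges, and with every power of $2$, $\pi$, $\rho$, $\tanh\rho$, and $\sinh\rho$ tracked through the chain; a single misplaced factor would ruin the identification of the argument $-16|\sigma|^2 y/|z|^4$ or of the constant in front.
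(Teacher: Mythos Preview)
Your proposal is correct and follows essentially the same approach as the paper: insert the heat kernel into \eqref{EsH0}, invert $t\mapsto 1/t$, pass to polar coordinates in $\lambda$ via Proposition~\ref{P:FTspheren}, apply Gegenbauer's formula \eqref{ohoh} with exactly the parameters you list, use Kummer's relation \eqref{hyperG} to shift the hypergeometric argument to $-16|\sigma|^2\tanh^2\rho/|z|^4$, and finish with $y=\tanh^2\rho$. The paper's proof differs only in the order of the first two steps and in the names of the integration variables, and it makes the same remark that the factor $r^{s-1}$ (your $(\tanh^{-1}\sqrt y)^{s-1}$) is the obstruction to gauge symmetry.
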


\begin{proof}
Our starting point is the formula \eqref{EsH0}  
which defines the fundamental solution of the operator $\mathscr L^{s}$. Inserting the expression \eqref{pipposemprepiupiccoloHoo} in \eqref{EsH0} we find
\begin{align}\label{EsH}
\mathscr E^{(s)}(z,\sigma) & = \frac{2^k}{(4\pi)^{\frac m2+k}\Gamma(s)} \int_0^\infty \frac{1}{t^{\frac m2+k -s}} \int_{\R^k} \left(\frac{|\la|}{\sinh |\la|}\right)^{\frac m2} e^{-\frac it \langle \sigma,\la\rangle} e^{- \frac{|z|^2}{4t} \frac{|\la|}{\tanh |\la|}}d\la \frac{dt}t
\\
& = \frac{2^k}{(4\pi)^{\frac m2+k}\Gamma(s)} \int_0^\infty t^{\frac m2+k -s} \int_{\R^k} \left(\frac{|\la|}{\sinh |\la|}\right)^{\frac m2} e^{-i t \langle \sigma,\la\rangle} e^{- t \frac{|z|^2}{4} \frac{|\la|}{\tanh |\la|}}d\la \frac{dt}t.
\notag
\end{align}
Since we want to use Proposition \ref{P:FTspheren} we assume hereafter that $k\ge 2$ and leave to the reader to provide the simple modifications for the case $k=1$. 
Using Cavalieri's principle, we find
\begin{align*}
& \int_{\R^k} \left(\frac{|\la|}{\sinh |\la|}\right)^{\frac m2} e^{-i t \langle \sigma,\la\rangle} e^{- t \frac{|z|^2}{4} \frac{|\la|}{\tanh |\la|}}d\la = \int_0^\infty \left(\frac{r}{\sinh r}\right)^{\frac m2} e^{- t \frac{|z|^2}{4} \frac{r}{\tanh r}} 
\\
& \times \left(\int_{\mathbb S^{k-1}} e^{-i t r\langle \sigma,\omega\rangle} d\omega\right) r^{k-1} dr.
\end{align*}
Applying Proposition \ref{P:FTspheren} with $\xi = \frac{t r \sigma}{2\pi}$, we obtain 
\begin{equation}\label{units}
\int_{\mathbb S^{k-1}} e^{-i t r\langle \sigma,\omega\rangle} d\omega = (2\pi)^{\frac k2} t^{-\frac k2 + 1} r^{-\frac k2 + 1} |\sigma|^{-\frac k2 + 1}J_{\frac{k}2-1}(t r |\sigma|).
\end{equation}
Inserting \eqref{units} in the above formula gives
\begin{align*}
& \int_{\R^k} \left(\frac{|\la|}{\sinh |\la|}\right)^{\frac m2} e^{-i t \langle \sigma,\la\rangle} e^{- t \frac{|z|^2}{4} \frac{|\la|}{\tanh |\la|}}d\la
\\
& = (2\pi)^{\frac k2} |\sigma|^{-\frac k2 + 1} t^{-\frac k2 + 1} \int_0^\infty r^{\frac k2} \left(\frac{r}{\sinh r}\right)^{\frac m2} e^{- t \frac{|z|^2}{4} \frac{r}{\tanh r}}  J_{\frac{k}2-1}(t r |\sigma|) dr.
\end{align*}
Returning with this information to \eqref{EsH} we find
\begin{align}\label{EsH2}
\mathscr E^{(s)}(z,\sigma) & = \frac{2^k (2\pi)^{\frac k2} |\sigma|^{-\frac k2 + 1}}{(4\pi)^{\frac m2+k}\Gamma(s)} \int_0^\infty t^{\frac m2+\frac k2 -s} \int_0^\infty r^{\frac k2} \left(\frac{r}{\sinh r}\right)^{\frac m2} e^{- t \frac{|z|^2}{4} \frac{r}{\tanh r}}  J_{\frac{k}2-1}(t r |\sigma|) dr dt
\\
& = \frac{2^k (2\pi)^{\frac k2}|\sigma|^{-\frac k2 + 1}}{(4\pi)^{\frac m2+k}\Gamma(s)} \int_0^\infty r^{\frac k2} \left(\frac{r}{\sinh r}\right)^{\frac m2} \int_0^\infty  t^{\frac m2+\frac k2 -s}  e^{- t \frac{|z|^2}{4} \frac{r}{\tanh r}}  J_{\frac{k}2-1}(t r |\sigma|)  dt\ dr.
\notag
\end{align}
We now apply formula \eqref{ohoh} with the choice
\[
\nu = \frac k2 -1,\ \ \ \mu = \frac m2 + \frac k2 - s + 1,\ \ \ \alpha = \frac{|z|^2}{4} \frac{r}{\tanh r},\ \ \ \beta = r |\sigma|.
\]
This gives
\[
\nu + \mu = \frac m2 + k - s,\ \ \ \ 1-\mu+\nu = -\frac m2 - 1 + s.
\]
Notice that, since $m\ge 2, k\ge 1$, we have $\nu+\mu\ge 2-s>0$. Furthermore,
\[
\alpha^2 + \beta^2 = \frac{r^2}{16 \tanh^2 r} (|z|^4 + 16 |\sigma|^2 \tanh^2 r),\ \ \ \frac{\beta^2}{\alpha^2 + \beta^2} = \frac{16 |\sigma|^2 \tanh^2 r}{|z|^4 + 16 |\sigma|^2 \tanh^2 r}.
\]
We thus obtain
\begin{align}\label{magicone}
& \int_0^\infty  t^{\frac m2+\frac k2 -s}  e^{- t \frac{|z|^2}{4} \frac{r}{\tanh r}}  J_{\frac{k}2-1}(t r |\sigma|)  dt  = \frac{2^{1-\frac k2} (r|\sigma|)^{\frac k2 -1} \G(\frac m2 + k - s)}{\G(\frac k2) (|z|^4 + 16 |\sigma|^2 \tanh^2 r)^{\frac{1}2(\frac m2 + k - s)}} 
\\
& \times \left(\frac{16 \tanh^2 r}{r^2}\right)^{\frac{1}2(\frac m2 + k - s)} F\left(\frac{1}{2}(\frac m2 + k - s),-\frac{1}{2}(\frac m2 + 1 - s);\frac k2;\frac{16 |\sigma|^2 \tanh^2 r}{|z|^4 + 16 |\sigma|^2 \tanh^2 r}\right)
\notag
\\
& = \frac{4^{\frac m2 + k - s}2^{1-\frac k2} |\sigma|^{\frac k2 -1}  \G(\frac m2 + k - s)}{\G(\frac k2) } \frac{r^{\frac k2 -1} (\tanh^2 r)^{\frac{1}2(\frac m2 + k - s)}}{r^{\frac m2 + k - s} (|z|^4 + 16 |\sigma|^2 \tanh^2 r)^{\frac{1}2(\frac m2 + k - s)}}
\notag
\\
& \times F\left(\frac{1}{2}(\frac m2 + k - s),-\frac{1}{2}(\frac m2 + 1 - s);\frac k2;\frac{16 |\sigma|^2 \tanh^2 r}{|z|^4 + 16 |\sigma|^2 \tanh^2 r}\right).
\notag
\end{align}
Substituting \eqref{magicone} in \eqref{EsH2} we find
\begin{align}\label{EsH33}
\mathscr E^{(s)}(z,\sigma) & = \frac{2^k (2\pi)^{\frac k2}}{(4\pi)^{\frac m2+k}\Gamma(s)} \frac{4^{\frac m2 + k - s}2^{1-\frac k2} \G(\frac m2 + k - s)}{2\G(\frac k2) } \int_0^\infty r^{s-1} \left(\frac{1}{\cosh^2 r}\right)^{\frac m4-1} 
\\
& \times \frac{(\tanh^2 r)^{\frac 12{(k - s-1)}}}{(|z|^4 + 16 |\sigma|^2 \tanh^2 r)^{\frac{1}2(\frac m2 + k - s)}}\ \frac{2\tanh r}{\cosh^2 r}
\notag\\
& \times  F\left(\frac{1}{2}(\frac m2 + k - s),-\frac{1}{2}(\frac m2 + 1 - s);\frac k2;\frac{16 |\sigma|^2 \tanh^2 r}{|z|^4 + 16 |\sigma|^2 \tanh^2 r}\right) dr.
\notag 
\end{align}
As a help to the reader, we mention that the presence of the factor $r^{s-1}$ in the integral in the right-hand side of \eqref{EsH33} is the reason for the break in gauge symmetry. 
To proceed we now use the formula \eqref{hyperG} with the choices
\[
\alpha = \frac{1}{2}(\frac m2 + k - s),\ \ \ \gamma = \frac k2,\ \ \ \beta = \frac 12(\frac m2 + k +1 -s),\ \ \ \frac{u}{u-1} = \frac{16 |\sigma|^2 \tanh^2 r}{|z|^4 + 16 |\sigma|^2 \tanh^2 r}.
\]
Notice that, with these choices, we have $\gamma - \beta = - \frac 12(\frac m2+1-s)$, and that 
\[
u = - \frac{16|\sigma|^2}{|z|^4} \tanh^2 r,\ \ \ \ \ 1-u = \frac{|z|^4 + 16 |\sigma|^2 \tanh^2 r}{|z|^4}.
\] 
We thus find from \eqref{hyperG}
\begin{align}\label{holyminchias}
& F\left(\frac{1}{2}(\frac m2 + k - s),-\frac 12(\frac m2 +1 -s);\frac k2;\frac{16 |\sigma|^2 \tanh^2 r}{|z|^4 + 16 |\sigma|^2 \tanh^2 r}\right) 
\\
& = \frac{(|z|^4 + 16 |\sigma|^2 \tanh^2 r)^{\frac{1}{2}(\frac m2 + k - s)}}{|z|^{2(\frac m2 + k - s)}} 
\notag \\
 & \times F\left(\frac{1}{2}(\frac m2 + k - s),\frac 12(\frac m2 + k+1-s);\frac k2;- \frac{16|\sigma|^2}{|z|^4} \tanh^2 r\right).
\notag
\end{align}
Substituting \eqref{holyminchias} in \eqref{EsH33} we find
\begin{align}\label{EsH4}
\mathscr E^{(s)}(z,\sigma) & = \frac{2^k \pi^{\frac k2}}{\pi^{\frac m2+k}\Gamma(s)} \frac{2^{- 2s} \G(\frac m2 + k - s)}{\G(\frac k2) |z|^{2(\frac m2 + k - s)}} \int_0^\infty r^{s-1} \left(1-\tanh^2 r\right)^{\frac m4-1} (\tanh^2 r)^{\frac 12{(k - s-1)}}
\\
& \times F\left(\frac{1}{2}(\frac m2 + k - s),\frac 12(\frac m2 + k+1-s);\frac k2;- \frac{16|\sigma|^2}{|z|^4} \tanh^2 r\right) \ \frac{2\tanh r}{\cosh^2 r} dr.
\notag 
\end{align}
In \eqref{EsH4} we now make the change of variable $y = \tanh^2 r$, which gives $dy = \frac{2\tanh r}{\cosh^2 r} dr$, obtaning
\begin{align*}
\mathscr E^{(s)}(z,\sigma) & = \frac{2^k \pi^{\frac k2}}{\pi^{\frac m2+k}\Gamma(s)} \frac{2^{- 2s} \G(\frac m2 + k - s)}{\G(\frac k2) |z|^{2(\frac m2 + k - s)}} \int_0^1 (\tanh^{-1} \sqrt y)^{s-1} \left(1-y\right)^{\frac m4-1} y^{\frac 12(k - s-1)}
\\
& \times F\left(\frac{1}{2}(\frac m2 + k - s),\frac 12(\frac m2 + k+1-s);\frac k2;- \frac{16|\sigma|^2}{|z|^4} y\right) \ dy. 
\notag
\end{align*}
This proves \eqref{EsH5} thus completing the proof.

\end{proof}

\begin{remark}\label{sinmenos}
We emphasise that formula \eqref{EsH5} continues to be valid if $s$ is changed into $-s$, provided that $\G(s)$ is replaced by $|\G(-s)|$. In such case, the resulting kernel 
\[
\mathscr E^{(-s)}(g,g') = - \frac{s}{\G(1-s)}\int_0^\infty \frac{1}{t^{1+s}} p(g,g',t) dt
\]
provides the following Riesz type representation for \eqref{bala}
\[
(-\mathscr L)^s u(g) = \operatorname{PV} \int_{\bG} \mathscr E^{(-s)}(g,g') \big[u(g) - u(g')\big] dg'.
\]
\end{remark}

The reader should notice  that, as expected from \eqref{EsH0}, the function $\mathscr E^{(s)}(z,\sigma)$ in \eqref{EsH5} is homogeneous of degree $\kappa = 2s- Q$ with respect to the anisotropic group dilations $(z,\sigma)\to (\la z,\la^2\sigma)$, where $Q = m+2k$ is the homogeneous dimension of $\bG$. One has in fact from \eqref{EsH5}
\[
\mathscr E^{(s)}(\la z,\la^2 \sigma) = \la^{2s -Q} \mathscr E^{(s)}(z,\sigma).
\]
 We mention that in the special setting of the Heisenberg group $\Hn$ Theorem \ref{T:fss} was proved with a different approach from ours in \cite[Proposition 4.1]{AM} by Askour and Mouayn. Still in $\Hn$, formulas of the fundamental solutions of the powers $\mathscr L^p$, where $p\in \mathbb N$, were found by Benson, Dooley and Ratcliff in \cite{BDR}.


\subsection{Conformal fundamental solutions}

We next present the counterpart of Theorem \ref{T:fss} for the operator $\mathscr L_s$. This is where geometry appears in the  form of the modified heat kernel \eqref{poisson}.

\begin{proof}[Proof of Theorem \ref{T:poisson}]
Proceeding as in the proof of Theorem \ref{T:fss}, from the expression of the conformal heat kernel \eqref{poisson} and from the definition \eqref{Esottos}, we have
\begin{align}\label{KsH}
\mathscr E_{(s)}(z,\sigma) &  = \frac{2^k}{(4\pi)^{\frac m2+k}\G(s)} \int_0^\infty t^{\frac m2+k -s-1}   \int_{\R^k} e^{-i t \langle \sigma,\la\rangle} \left(\frac{|\la|}{\sinh |\la|}\right)^{\frac m2+1-s}  e^{- t \frac{|z|^2}{4} \frac{|\la|}{\tanh |\la|}}d\la dt.
\end{align}
Cavalieri's principle and \eqref{units} give as before
\begin{align*}
& \int_{\R^k} e^{-i t \langle \sigma,\la\rangle} \left(\frac{|\la|}{\sinh |\la|}\right)^{\frac m2+1-s}  e^{- t \frac{|z|^2}{4} \frac{|\la|}{\tanh |\la|}}d\la
\\
& = (2\pi)^{\frac k2} |\sigma|^{-\frac k2 + 1} t^{-\frac k2 + 1} \int_0^\infty r^{\frac k2} \left(\frac{r}{\sinh r}\right)^{\frac m2+1-s} e^{- t \frac{|z|^2}{4} \frac{r}{\tanh r}}  J_{\frac{k}2-1}(t r |\sigma|) dr.
\end{align*}
Inserting this result in \eqref{KsH} gives
\begin{align}\label{KsH2}
& \mathscr E_{(s)}(z,\sigma)  
= \frac{2^k (2\pi)^{\frac k2}|\sigma|^{-\frac k2 + 1}}{(4\pi)^{\frac m2+k}\G(s)} 
 \int_0^\infty r^{\frac k2} \left(\frac{r}{\sinh r}\right)^{\frac m2+1-s} 
 \\
 & \times \int_0^\infty  t^{\frac m2+\frac k2 -s}  e^{- t \frac{|z|^2}{4} \frac{r}{\tanh r}}\  J_{\frac{k}2-1}(t r |\sigma|)  dt\ dr.
\notag\end{align}
At this point, if as in the proof of Theorem \ref{T:fss} we substitute in \eqref{KsH2} the formula \eqref{magicone}, we see that a miracle happens: the powers of $r$ cancel, and we obtain the expression
\begin{align}\label{m1}
\mathscr E_{(s)}(z,\sigma)    & = \frac{2^{k-2s+1} \G(\frac m2 + k - s)}{\pi^{\frac{m+k}2}\G(\frac k2) \G(s)} \int_0^\infty \left(\frac{1}{\sinh^2 r}\right)^{\frac 12(\frac m2+1-s)} \frac{(\tanh^2 r)^{\frac 12(\frac m2 + k-s)}}{(|z|^4 + 16 |\sigma|^2 \tanh^2 r)^{\frac 12(\frac m2 + k -s)}}
\\
& \times   F\left(\frac{1}{2}(\frac m2 + k - s),-\frac{1}{2}(\frac m2 + 1 - s);\frac k2;\frac{16 |\sigma|^2 \tanh^2 r}{|z|^4 + 16 |\sigma|^2 \tanh^2 r}\right) dr.
\notag
\end{align}
We now use \eqref{holyminchias} in \eqref{m1} to find
\begin{align}\label{m2}
\mathscr E_{(s)}(z,\sigma)    & = \frac{2^{k-2s+1} \G(\frac m2 + k - s)}{\pi^{\frac{m+k}2}\G(\frac k2) \G(s)|z|^{2(\frac m2 + k - s)}} \int_0^\infty \left(\frac{1}{\sinh^2 r}\right)^{\frac 12(\frac m2+1-s)} (\tanh^2 r)^{\frac 12(\frac m2 + k-s)}
\\
& \times  F\left(\frac{1}{2}(\frac m2 + k - s),\frac 12(\frac m2 + k+1-s);\frac k2;- \frac{16|\sigma|^2}{|z|^4} \tanh^2 r\right) dr.
\notag
\end{align}
Next, to prepare the situation for  the change of variable $y = \tanh^2 r$ in the integral in the right-hand side of \eqref{m2}, we rearrange the terms  in the following way:
\begin{align}\label{m3}
& \left(\frac{1}{\sinh^2 r}\right)^{\frac 12(\frac m2+1-s)} (\tanh^2 r)^{\frac 12(\frac m2 + k-s)} = \frac 12 \left(\frac{1}{\cosh^2 r}\right)^{\frac 12(\frac m2-1-s)} (\tanh^2 r)^{\frac k2 -1}\ \frac{2 \tanh r}{\cosh^2 r}
\\
& = \frac 12 \left(1-\tanh^2 r\right)^{\frac 12(\frac m2-1-s)} (\tanh^2 r)^{\frac k2 -1}\ \frac{2 \tanh r}{\cosh^2 r}.
\notag
\end{align}
Substituting \eqref{m3} in \eqref{m2} we obtain
\begin{align}\label{KsH4}
\mathscr E_{(s)}(z,\sigma)  & = \frac{2^{k-2s} \G(\frac m2 + k - s)}{\pi^{\frac{m+k}2}\G(\frac k2)\G(s) |z|^{2(\frac m2 + k - s)}} \int_0^\infty \left(1-\tanh^2 r\right)^{\frac 12(\frac m2-1-s)} (\tanh^2 r)^{\frac k2 -1}
\\
& \times F\left(\frac{1}{2}(\frac m2 + k - s),\frac 12(\frac m2 + k+1-s);\frac k2;- \frac{16|\sigma|^2}{|z|^4} \tanh^2 r\right) \ \frac{2\tanh r}{\cosh^2 r} dr.
\notag 
\end{align}
If in \eqref{KsH4} we now make the change of variable $y = \tanh^2 r$, which gives $dy = \frac{2\tanh r}{\cosh^2 r} dr$, we find
\begin{align}\label{lollypop}
\mathscr E_{(s)}(z,\sigma)   & = \frac{2^{k-2s} \G(\frac m2 + k - s)}{\pi^{\frac{m+k}2}\G(\frac k2)\G(s) |z|^{2(\frac m2 + k - s)}} \int_0^1 \left(1-y\right)^{\frac 12(\frac m2-1-s)} y^{\frac k2 -1}
\\
& \times F\left(\frac{1}{2}(\frac m2 + k - s),\frac 12(\frac m2 + k+1-s);\frac k2;- \frac{16|\sigma|^2}{|z|^4} y\right) \ dy. 
\notag
\end{align}
At this point we use Bateman's formula \eqref{hyperGint}, in which we choose
\[
c = \frac k2,\ \ \ \gamma = \frac 12(\frac m2 + k+1-s),\ \ \ \alpha = \frac{1}{2}(\frac m2 + k - s),\ \ \ \beta = \frac 12(\frac m2 + k+1-s),\ \ \ a = - \frac{16|\sigma|^2}{|z|^4}.
\]
This gives
\begin{align}\label{K3}
& \int_0^1 \left(1-y\right)^{\frac 12(\frac m2-1-s)} y^{\frac k2 -1} F\left(\frac{1}{2}(\frac m2 + k - s),\frac 12(\frac m2 + k+1-s);\frac k2;- \frac{16|\sigma|^2}{|z|^4} y\right)  dy
\\
& = \frac{\G(\frac k2)\G(\frac 12(\frac m2+1-s))}{\G(\frac 12(\frac m2 + k+1-s))} F(\frac{1}{2}(\frac m2 + k - s),\frac 12(\frac m2 + k+1-s);\frac 12(\frac m2 + k+1-s);- \frac{16|\sigma|^2}{|z|^4}).
\notag
\end{align}
This is a remarkable conclusion since we can now take advantage of formula \eqref{fs6} to infer that
\[
F(\frac{1}{2}(\frac m2 + k - s),\frac 12(\frac m2 + k+1-s);\frac 12(\frac m2 + k+1-s);- \frac{16|\sigma|^2}{|z|^4}) = \frac{|z|^{2(\frac m2 + k - s)}}{(|z|^4 + 16 |\sigma|^2)^{\frac{1}{2}(\frac m2 + k - s)}}.
\]
Substitution of this result in \eqref{K3} gives
\begin{align}\label{K4}
&  \int_0^1  \left(1-y\right)^{\frac 12(\frac m2-1-s)} y^{\frac k2 -1}   F\left(\frac{1}{2}(\frac m2 + k - s),\frac 12(\frac m2 + k+1-s);\frac k2;- \frac{16|\sigma|^2}{|z|^4} y\right) dy
\\
& = \frac{\G(\frac k2)\G(\frac 12(\frac m2+1-s))}{\G(\frac 12(\frac m2 + k+1-s))} |z|^{2(\frac m2 + k - s)}\ \left(|z|^4 + 16 |\sigma|^2\right)^{-\frac{1}{2}(\frac m2 + k - s)}.
\notag
\end{align}
Combining \eqref{lollypop} with \eqref{K4} we obtain
\[
\mathscr E_{(s)}(z,\sigma)   = \frac{2^{k-2s} \G(\frac m2 + k - s)\G(\frac 12(\frac m2+1-s))}{\pi^{\frac{m+k}2} \G(s) \G(\frac 12(\frac m2 + k+1-s))}  \left(|z|^4 + 16 |\sigma|^2\right)^{-\frac{1}{2}(\frac m2 + k-s)}.
\]
If we now use the Legendre duplication formula (see (1.2.3) on p.3 in \cite{Le})
\begin{equation}\label{prod}
2^{2x-1} \G(x) \G(x+\frac 12) = \sqrt \pi \G(2x),
\end{equation}
in which we take $x = \frac 12(\frac m2 + k -s)$, we find
\[
2^{\frac m2 + k -1-s} \G(\frac 12(\frac m2 + k -s)) \G(\frac 12(\frac m2 + k+1-s)) = \sqrt \pi \G(\frac m2 + k -s).
\]
Inserting this identity in the above formula we finally have
\[
\mathscr E_{(s)}(z,\sigma)   = \frac{2^{\frac m2 + 2k-3s-1} \G(\frac 12(\frac m2+1-s)) \G(\frac 12(\frac m2 + k -s))}{\pi^{\frac{m+k+1}2}\G(s)} \left(|z|^4 + 16 |\sigma|^2\right)^{-\frac{1}{2}(\frac m2 + k-s)}.
\]
Comparing the latter expression with \eqref{Esottos}, and keeping \eqref{C} in mind, we see  that the proof is completed.

\end{proof}

Before proceeding we note that the poles of the constant $C_{(s)}(m,k)$ in \eqref{C} occur for values of $s$ such that
\[
s = \frac m2 + 1 + 2j,\ \ \ j\in \mathbb N\cup\{0\},\ \ \text{or}\ \ \ s = \frac m2 + k + 2\ell,\ \ \ \ell\in \mathbb N\cup\{0\}
\]
(when $k =1$ and we are thus in the framework of $\Hn$, these two exceptional sets coincide). Therefore, the function $\mathscr E_{(s)}(z,\sigma)$ continues to provide a fundamental solution for the higher-order fractional powers of $\mathscr L_s$ in the range $0<s< \frac m2 + 1$. One should also see  \cite[Proposition A.1]{BFM}. Although these authors deal with the conformal Laplacian on the sphere in $\mathbb C^{n+1}$, using the Cayley map one can extract from their statement the Heisenberg group case of Theorem \ref{T:poisson}. 

\medskip

Finally, we present the 

\begin{proof}[Proof of Theorem \ref{T:thick}]
It follows closely the lines of that of Theorem \ref{T:poisson}, therefore we will skip all the unnecessary details. We start from definition \eqref{frake} in which, in the integral in the right-hand side, we substitute the expression of $q_{(s)}((z,\sigma),t,y)$ from \eqref{parBfs0}. After changing $t$ into $t^{-1}$, we find
\begin{align*}
& \mathfrak e_{(s)}(z,\sigma,y) = \frac{2^k}{(4\pi)^{\frac{m}2 +k +1-s}} \int_0^\infty t^{\frac m2+k -s-1}   \int_{\R^k} e^{-  i t \langle \sigma,\la\rangle}   \left(\frac{|\la|}{\sinh |\la|}\right)^{\frac m2+1-s} e^{-t \frac{|z|^2 +y^2}{4}\frac{|\la|}{\tanh |\la|}} d\la dt.
\end{align*}
Comparing the latter formula with \eqref{KsH}, we see that the two expressions differ exclusively by: 
\begin{itemize}
\item[(1)] the multiplicative factor $\frac{\G(s)}{(4\pi)^{1-s}}$; 
\item[(2)] the fact that the term $e^{- t \frac{|z|^2}{4} \frac{|\la|}{\tanh |\la|}}$ is replaced by $e^{-t \frac{|z|^2 +y^2}{4}\frac{|\la|}{\tanh |\la|}}$. 
\end{itemize}
Consequently, the final value of the integral expressing $\mathfrak e_{(s)}(z,\sigma,y)$ will be the same as the final value of \eqref{KsH}, except that there will be a multiplicative factor $\frac{\G(s)}{(4\pi)^{1-s}}$, and $|z|^4$ will be replaced by $(|z|^2 +y^2)^2$. This observation effectively finishes the proof of \eqref{qmasomeno}, and therefore of Theorem \ref{T:thick}.

\end{proof}

We mention in closing that the function $\mathfrak e_{(s)}((z,\sigma),y)$ in \eqref{qmasomeno} of Theorem \ref{T:thick} and its companion obtained by replacing $s$ with $-s$ are precisely those appearing in the formulas of Cowling and Haagerup in \cite[Section 3]{CH}, see also the works \cite{RTaim, RT}. In the case $s=1$, these functions were shown to play a central role in the study of the CR-Yamabe problem in  the Heisenberg group \cite{JL} (see also \cite{GV} for a partial result in groups of Heisenberg type), as well as in the Sobolev embedding theorem \cite{JL2} (see also the already cited work \cite{FL}).




\bibliographystyle{amsplain}

\end{document}